\documentclass[11pt, reqno]{amsart}
\usepackage{amssymb, amsthm, amsmath, amsfonts,mathtools}
\usepackage{array, epsfig}
\usepackage{bbm}

\usepackage{hyperref}
\usepackage[numbers,square]{natbib}
\allowdisplaybreaks

\numberwithin{equation}{section}

  \evensidemargin
\oddsidemargin
\makeatletter
\def\@tocline#1#2#3#4#5#6#7{\relax
  \ifnum #1>\c@tocdepth 
  \else
    \par \addpenalty\@secpenalty\addvspace{#2}%
    \begingroup \hyphenpenalty\@M
    \@ifempty{#4}{%
      \@tempdima\csname r@tocindent\number#1\endcsname\relax
    }{%
      \@tempdima#4\relax
    }%
    \parindent\z@ \leftskip#3\relax \advance\leftskip\@tempdima\relax
    \rightskip\@pnumwidth plus4em \parfillskip-\@pnumwidth
    #5\leavevmode\hskip-\@tempdima
      \ifcase #1
       \or\or \hskip 1em \or \hskip 2em \else \hskip 3em \fi%
      #6\nobreak\relax
    \dotfill\hbox to\@pnumwidth{\@tocpagenum{#7}}\par
    \nobreak
    \endgroup
  \fi}
\makeatother

\newcommand{\E}{\mathbb E}

\newcommand{\R}{\mathbb{R}}
\newcommand{\N}{\mathbb{N}}

\newcommand{\C}{\mathbb{C}}

\renewcommand{\P}{\mathbb{P}}

\renewcommand{\Re}{\operatorname{Re}}

\newcommand{\Vol}{\mathop{\mathrm{Vol}}\nolimits}

\newcommand{\eps}{\varepsilon}

\newcommand{\toweak}{\overset{w}{\underset{n\to\infty}\longrightarrow}}

\newcommand{\ind}{\mathbbm{1}}

\theoremstyle{plain}
\newtheorem{theorem}{Theorem}[section]
\newtheorem{lemma}[theorem]{Lemma}

\newtheorem{proposition}[theorem]{Proposition}
\newtheorem{conjecture}[theorem]{Conjecture}

\theoremstyle{definition}

\newtheorem{example}[theorem]{Example}

\newtheorem{remark}[theorem]{Remark}

\theoremstyle{remark}

\begin{document}

\author{Zakhar Kabluchko}
\address{Zakhar Kabluchko: Institut f\"ur Mathematische Stochastik,
Universit\"at M\"unster,
Orl\'eans-Ring 10,
48149 M\"unster, Germany}
\email{zakhar.kabluchko@uni-muenster.de}

\author{Alexander Marynych}
\address{Alexander Marynych: Faculty of Computer Science and Cybernetics, Taras Shevchenko National University of Kyiv, Volodymyrska 60, 01601 Kyiv, Ukraine; Department of Mathematics, Kyiv School of Economics, Mykoly Shpaka 3, 03113 Kyiv, Ukraine}
\email{marynych@knu.ua, o.marynych@kse.org.ua}

\author{Joscha Prochno}
\address{Joscha Prochno: Faculty of Computer Science and Mathematics, University of Passau, Dr.-Hans-Kapfinger-Str. 30, 94032 Passau, Germany}
\email{joscha.prochno@uni-passau.de}

\title[Intrinsic volumes of \(\ell_p\)-balls]{Intrinsic volumes of \(\ell_p\)-balls and a continuum of Maxwell--Poincar\'e--Borel laws for their curvature measures}

\keywords{Intrinsic volumes; quermassintegrals; weighted $\ell_p$-balls; ellipsoids; random projections;  curvature measures; surface area; Maxwell--Poincar\'e--Borel theorem; $p$-Gaussian distribution; uniform distribution on the surface; Laplace method}

\subjclass[2020]{Primary: 52A39, 52A23 Secondary: 52A20, 52A21, 60F05, 46B06.}

\begin{abstract}
For $p>1$, we derive explicit formulas for the intrinsic volumes $V_0(\mathbb B_p^n),\dots,V_{n-1}(\mathbb B_p^n)$
of the $n$-dimensional \(\ell_p\)-balls
\[
\mathbb B_p^n
=
\{x\in\mathbb R^n:\ |x_1|^p+\ldots+|x_n|^p\le 1\}
\]
and, more generally, of their coordinate-weighted analogues. The formula is given in terms of a one-dimensional integral involving the special function
\[
\mathcal F_p(t;\nu)
=
\int_{\mathbb R}|u|^\nu e^{-|u|^p-t|u|^{2p-2}}\,du.
\]
Previously known formulas for the intrinsic volumes of ellipsoids, weighted crosspolytopes, and rectangular boxes arise as special or limiting cases. We also obtain asymptotic formulas for \(V_{j(n)}(\mathbb B_p^n)\) in the high-dimensional regime \(n\to\infty\), where the index \(j(n)\) is allowed to depend on \(n\).

We further investigate the curvature measures of $\mathbb B_p^n$. These are finite measures
$$
\Phi_0(\mathbb B_p^n,\cdot),\dots,\Phi_{n-1}(\mathbb B_p^n,\cdot)
$$
on \(\partial\mathbb B_p^n\) that localize the intrinsic volumes. We prove a Maxwell--Poincar\'{e}--Borel type limit theorem: if \(X_n\) is a random boundary point of \(\mathbb B_p^n\) distributed according to the normalized curvature measure $\Phi_{j(n)}(\mathbb B_p^n,\cdot)/V_{j(n)}(\mathbb B_p^n)$,
where \(j(n)/n\to\alpha\in[0,1]\) as \(n\to\infty\), then for every fixed \(r\in\mathbb N\), the joint distribution of the first \(r\) coordinates of \(n^{1/p}X_n\) converges weakly to the product measure \(\nu_{p,\alpha}^{\otimes r}\). Here \(\nu_{p,\alpha}\) is an explicit probability measure on \(\mathbb R\) depending on \(p>1\) and \(\alpha\in[0,1]\). The classical Maxwell--Poincar\'{e}--Borel theorem is recovered in the special case \(p=2\) and \(\alpha=1\).

The main tool underlying these results is an explicit characterization of the curvature measures of coordinate-weighted \(\ell_p\)-balls, and in particular an explicit formula for their mixed moments.
\end{abstract}
\maketitle

\tableofcontents

\section{Introduction}\label{sec:introduction}

The study of high-dimensional convex bodies plays an important role in modern mathematics, as it reveals fundamental properties of geometric structures with far-reaching applications in areas such as optimization, data analysis, and theoretical computer science. Arguably, the most important class of such bodies is formed by the $\ell_p$-balls, and in this paper we analyze their intrinsic volumes, along with those of their coordinate-weighted analogues, and investigate their curvature measures, establishing an explicit characterization of these measures and proving a continuum of Maxwell--Poincar\'e--Borel limit theorems; for a comprehensive overview of probabilistic methods in the study of $\ell_p$-balls we refer the reader to \cite{P2026, ProchnoThaleTurchi2018}.

Let $K\subset \mathbb R^n$ be a convex body, that is, a compact convex set with a non-empty interior. The intrinsic volumes are a sequence of geometric quantities associated with a convex body that capture its size and shape across different dimensions. In particular, they interpolate between classical notions such as volume, surface area, and mean width. We shall denote the intrinsic volumes of $K$ by $V_0(K), \dots, V_n(K)$. A standard way to introduce these quantities is via the Steiner formula
\[
\Vol_n(K+t\mathbb B_2^n)
 = \sum_{j=0}^{n}\kappa_{n-j}\,V_j(K)\,t^{\,n-j},
\qquad t\ge 0,
\]
where \(\mathbb B_2^m\) is the Euclidean unit ball in \(\mathbb R^m\), and
\[
\kappa_m:=\Vol_m(\mathbb B_2^m)=\frac{\pi^{m/2}}{\Gamma(1+\frac m2)}
\]
denotes its \(m\)-dimensional Lebesgue measure or volume. In particular, \(V_n(K)=\Vol_n(K)\) is the volume of \(K\), \(2V_{n-1}(K) = \mathcal H^{n-1}(\partial K)\) is the \((n-1)\)-dimensional surface area of \(K\), and \(V_0(K)=1\) for every non-empty convex body \(K\). Here $\mathcal H^{n-1}(\cdot)$ denotes the $(n-1)$-dimensional Hausdorff measure on $\R^{n}$.

The intrinsic volumes may also be interpreted as suitably normalized mean projection volumes. More precisely, let \(E_{n,j}\) be a random \(j\)-dimensional linear subspace of \(\mathbb R^n\), distributed according to the rotation-invariant probability measure on the Grassmannian manifold, and let \(\Pi_{n,j}K\) denote the orthogonal projection of \(K\) onto \(E_{n,j}\). Then Kubota's formula states that
\[
V_j(K)
=
\binom{n}{j}\frac{\kappa_n}{\kappa_j\kappa_{n-j}}
\mathbb E\,\Vol_j(\Pi_{n,j}K),
\qquad j=1,\dots,n.
\]
Thus, \(V_j(K)\) may be interpreted as a normalized average \(j\)-dimensional volume of the orthogonal projections of \(K\). For additional background on this topic, we refer the reader to~\cite{SchneiderBook,schneider_weil_book}.

Despite the fundamental role of intrinsic volumes in convex geometry, where they form the basic building blocks of valuation theory and play a central role in integral geometry or random geometric models, explicit formulas for the full sequence \(V_0(K),\dots,V_n(K)\) are available only for a rather small collection of convex bodies. Elementary examples include Euclidean balls and rectangular boxes. Among the classical polyhedral examples are regular simplices~\cite{hadwiger,ruben} and regular crosspolytopes~\cite{betke_henk}; for these and related examples, see also~\cite[Section~15.2.3]{HenkRichterGebertZiegler2017}. More recent additions include ellipsoids~\cite{GusakovaSpodarevZaporozhets2025}, orthocentric simplices~\cite{KabluchkoSchange2025,KabluchkoZaporozhets2019} and certain weighted analogues of crosspolytopes~\cite{HenkHernandezCifre2008a,KabluchkoZaporozhets2019}. In the infinite-dimensional Hilbert-space setting, explicit formulas are likewise available only for a few examples; see~\cite{Gao2003,GV01,kabluchko_zaporozhets_sobolev}.

\vspace*{2mm}
The paper is divided into two parts. In Section~\ref{sec:main_results_intrinsic_volumes}, we derive an explicit formula for the intrinsic volumes of the \(\ell_p\)-balls in $\R^n$ and, more generally, of their coordinate-weighted analogues. We also obtain asymptotic formulas for these intrinsic volumes in the high-dimensional regime \(n\to\infty\). In Section~\ref{sec:main_results_curvature_measures}, we investigate the curvature measures of \(\ell_p\)-balls and, in particular, prove Maxwell--Poincar\'{e}--Borel type limit theorems for these measures.

\section{Main results on intrinsic volumes}\label{sec:main_results_intrinsic_volumes}

\subsection{Exact results on intrinsic volumes}
We begin with the standard unit \(\ell_p\)-ball in \(\mathbb R^n\),
\[
\mathbb B_p^n:= \Bigl\{x\in \mathbb R^n:\ |x_1|^p+\cdots+|x_n|^p\le 1\Bigr\},
\qquad 1\le p<\infty,\quad n\ge 2.
\]
Its \(n\)-dimensional volume is given by the classical formula that goes back to Dirichlet,
\begin{equation}\label{eq:volume_l_p_ball_exact}
V_n(\mathbb B_p^n)=\Vol_n(\mathbb B_p^n)
=\frac{\bigl(2\,\Gamma(1+\tfrac1p)\bigr)^n}{\Gamma(1+\tfrac np)},
\end{equation}
see~\cite[Section~5.4.1]{BorweinBailey2004} and~\cite{Gao2013,KempkaVybiral2017,Wang2005}.
Our first theorem provides an exact formula for all remaining intrinsic volumes
\(V_0(\mathbb B_p^n),\dots,V_{n-1}(\mathbb B_p^n)\).

\begin{theorem}[Exact formula for $V_j(\mathbb B_p^n)$]\label{theo:V_j_ell_p_ball_exact}
Fix $1 < p < \infty$ and $n\geq 2$. Then for all  $j\in\{0,\dots,n-1\}$, the $j$-th intrinsic volume of the unit $\ell_p$-ball in $\R^n$ is given by
\begin{equation}\label{eq:V_j_ell_p_ball_exact}
V_j(\mathbb B_p^n)
=
\frac{p\,(p-1)^{n-j-1}\binom{n}{j}}
{\kappa_{n-j}\,\Gamma(1+\frac jp)\,\Gamma(\frac {n-j}2)}
\int_0^\infty
\theta^{\frac{n-j}2-1}\,
\mathcal I_p(\theta)^j\,
\mathcal J_p(\theta)^{n-j-1}\,
\mathcal  K_p(\theta)\,d\theta,
\end{equation}
where
\begin{align}
\mathcal I_p(\theta)
&:=
\int_{\mathbb R} e^{-|x|^p-\theta |x|^{2p-2}}\,dx, \qquad \theta\geq 0,  \label{eq:def_I_p}
\\
\mathcal J_p(\theta)
&:=\int_{\mathbb R} |x|^{p-2}e^{-|x|^p-\theta |x|^{2p-2}}\,dx, \qquad \theta\geq 0,  \label{eq:def_J_p}
\\
\mathcal  K_p(\theta)
&:=
\int_{\mathbb R} |x|^{2p-2}e^{-|x|^p-\theta |x|^{2p-2}}\,dx = -\mathcal I_p'(\theta), \qquad \theta\geq 0.   \label{eq:def_K_p}
\end{align}
\end{theorem}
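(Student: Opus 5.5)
We will compute $V_j$ through the curvature-integral form of the Steiner formula. For $p>1$ the boundary of $K=\mathbb B_p^n$ is a $C^1$ hypersurface, and it is $C^2$ away from the coordinate hyperplanes; the exceptional set carries no curvature mass. We therefore use the representation
\[
V_j(K)=\frac{1}{n\kappa_{n-j}}\binom{n}{j}\int_{\partial K} H_{n-1-j}(x)\,\mathcal H^{n-1}(dx),
\]
where $H_k$ denotes the normalized $k$-th elementary symmetric function of the principal curvatures. In place of principal curvatures it is more convenient to use the Gaussian-integral form of the Steiner formula,
\[
\int_{\R^n} e^{-\pi\, d(x,K)^2}\,dx=\sum_j V_j(K),
\]
or equivalently the local parallel volume. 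We parametrize the outer shell $\{x+t u(x): x\in\partial K,\ t>0\}$ by boundary points and normal distances. The Jacobian of this parametrization is $\prod_i(1+t\kappa_i)$. Extracting the coefficient of $t^{n-1-j}$ gives the elementary symmetric function $e_{n-1-j}(\kappa)$ integrated against surface measure.

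The first step is to compute the curvature data explicitly. The gradient of $f(x)=\sum|x_i|^p$ is $g_i=p|x_i|^{p-1}\mathrm{sgn}\,x_i$, and its Hessian is diagonal, $D=\mathrm{diag}\bigl(p(p-1)|x_i|^{p-2}\bigr)$. The shape operator is $D/|g|$ restricted to $g^\perp$. For a diagonal matrix compressed to a hyperplane with unit normal $\nu=g/|g|$, the elementary symmetric functions are
\[
e_k\bigl(D|_{\nu^\perp}\bigr)=\sum_{|S|=k}\Bigl(\prod_{i\in S}d_i\Bigr)\Bigl(\sum_{i\notin S}\nu_i^2\Bigr).
\]
This is the standard formula for the characteristic polynomial of a compression. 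Hence
\[
e_{n-1-j}(\kappa)=|g|^{-(n-1-j)}\sum_{|S|=n-1-j}\prod_{i\in S}p(p-1)|x_i|^{p-2}\cdot\frac{\sum_{i\notin S}p^2|x_i|^{2p-2}}{|g|^2}.
\]
Combining this with the surface element, written in the cone (polar) parametrization of $\partial K$, the integrand becomes a sum over subsets $S$ of products of powers of the $|x_i|$, multiplied by a negative power of $|g|^2=p^2\sum|x_i|^{2p-2}$.

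The second step, which is the crux, is to decouple the power $|g|^{-(n-j)-1}$ (or a similar exponent). We write $|g|^{-2s}=\Gamma(s)^{-1}\int_0^\infty\theta^{s-1}e^{-\theta|g|^2/p^2}\,d\theta$, which produces the factor $\theta^{\frac{n-j}2-1}$ and the weight $e^{-\theta\sum|x_i|^{2p-2}}$. We must also pass from an integral over $\partial\mathbb B_p^n$ to an integral over $\R^n$ with weight $e^{-\sum|x_i|^p}$. For this we use the cone-measure trick: integrating a function that is homogeneous of the appropriate degree against $e^{-f}$ over $\R^n$ factors as $\Gamma(\cdot)$ times the boundary integral. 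This is where $\Gamma(1+j/p)$ arises. The main obstacle is that $|g|^2$ is not homogeneous of the same degree as $f$, so the naive factorization fails. To handle it, we introduce the scaling in the Laplace variable and exploit homogeneity of degree $2p-2$: substituting $x=r y$ with $y\in\partial K$ and integrating in $r$ against $e^{-r^p}$ lets the $\theta$-integral absorb the mismatch after the change of variable $\theta\mapsto\theta r^{-(2p-2)}$ (or the reverse). We would verify carefully that the exponents combine to give exactly the stated Gamma factors.

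After these steps the integrand over $\R^n$ factorizes coordinatewise. Coordinates $i\in S$ contribute $\int|x|^{p-2}e^{-|x|^p-\theta|x|^{2p-2}}dx=\mathcal J_p(\theta)$, each with a factor $(p-1)$. Coordinates outside $S$ contribute $\mathcal I_p(\theta)$, except for the single one selected from $\sum_{i\notin S}|x_i|^{2p-2}$, which contributes $\mathcal K_p(\theta)$. Counting the terms, there are $\binom{n}{n-1-j}$ choices of $S$ and $j+1$ choices of the marked index. Combined with the prefactor $\binom nj/(n\kappa_{n-j})$, this yields $\binom nj\, \mathcal I_p^j\mathcal J_p^{n-j-1}\mathcal K_p$ up to constants. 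The final step is to collect the constants ($p$, $(p-1)^{n-j-1}$, $\Gamma(\frac{n-j}2)$, $\Gamma(1+\frac jp)$) and check them against the case $j=n-1$, surface area, or against $p=2$, ellipsoids, as a sanity check.
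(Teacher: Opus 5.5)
Your outline is essentially the paper's proof: the density $\frac{1}{(n-j)\kappa_{n-j}}\sigma_{n-j-1}(\lambda_1,\dots,\lambda_{n-1})$ is computed from the diagonal Hessian compressed to $g^\perp$ (the paper uses the equivalent bordered determinant). A homogeneity step then turns the boundary integral into $\int_{\mathbb R^n}e^{-f}(\cdots)\,dx$ and produces $\Gamma(1+\frac jp)$; the paper does this with the coarea formula on level sets of $f$, where you use the cone measure. Finally, the Gamma-integral representation of $|g|^{-(n-j)}$ factorizes everything into $\mathcal I_p^{\,j}\mathcal J_p^{\,n-j-1}\mathcal K_p$.

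Two points need tightening. First, the ``obstacle'' you anticipate is not there. The whole integrand, a monomial in the $|x_i|$ times $|g|^{-(n-j)}$, is homogeneous, so you can pass to $\mathbb R^n$ first and decouple afterwards without rescaling $\theta$. The exponent really is $n-j$, not $n-j+1$: one factor of $|g|$ is absorbed by the surface element via $\langle x,\nu(x)\rangle=p/|g(x)|$ (resp.\ by the coarea formula). Second, the assertion that $\partial\mathbb B_p^n\cap\{x_1\cdots x_n=0\}$ carries no curvature mass needs proof when $1<p<2$, since the boundary fails to be $C^2$ along this set and the curvatures blow up there. The paper deduces it from the $C^2$-smoothness of the support function (as $q>2$). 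A quicker argument: $\nu_i(x)=0$ whenever $x_i=0$, so the local parallel set over this set lies in the coordinate hyperplanes and has volume zero.
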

\begin{remark}[On special functions]
Let  \(1<p<\infty\).
The functions \(\mathcal I_p\), \(\mathcal J_p\), and \(\mathcal K_p\) introduced above are special cases of the more general family
\[
\mathcal F_p(t;\nu)
:=
\int_{\mathbb R} |y|^\nu e^{-|y|^p-t|y|^{2p-2}}\,dy,
\qquad t>0,\quad \nu\in\mathbb C,\quad \Re \nu>-1.
\]
More precisely,
\[
\mathcal I_p(\theta)=\mathcal F_p(\theta;0),\qquad
\mathcal J_p(\theta)=\mathcal F_p(\theta;p-2),\qquad
\mathcal K_p(\theta)=\mathcal F_p(\theta;2p-2).
\]
For completeness, \(\mathcal F_p\) can be expressed  in terms of the Fox--Wright function
$
{}_1\Psi_0 [(a,A);z]
:=
\sum_{k=0}^\infty \Gamma(a+Ak)\,z^k/k!.
$
The exact formula depends on whether \(1<p<2\), \(p=2\), or \(p>2\). As this representation will not be needed in the sequel, we do not record it here.
\end{remark}


The following examples discuss the cases $p=2,1$ and $p\to \infty$, corresponding to the Euclidean ball, the crosspolytope,  and the cube, respectively. These are precisely the instances in which explicit formulas for the full sequence of intrinsic volumes of $\mathbb B_p^n$ were previously known.

\begin{example}[$p=2$: Euclidean balls]
For the Euclidean unit ball \(\mathbb B_2^n\), the intrinsic volumes are given by
\begin{equation}\label{eq:V_j_balls}
V_j(\mathbb B_2^n)
=
\binom{n}{j}\frac{\kappa_n}{\kappa_{n-j}},
\qquad j=0,1,\dots,n.
\end{equation}
This classical formula follows immediately from~\eqref{eq:V_j_ell_p_ball_exact}, since
\begin{equation}\label{eq:I_J_K_for_p_equal_2}
\mathcal I_2(\theta)
=
\mathcal J_2(\theta)
=
\int_{\mathbb R} e^{-(1+\theta)t^2}\,dt
=
\sqrt{\pi}\,(1+\theta)^{-1/2},
\qquad
\mathcal K_2(\theta)
=
-\mathcal I_2'(\theta)
=
\frac{\sqrt{\pi}}{2}(1+\theta)^{-3/2}.
\end{equation}
Substituting these expressions into~\eqref{eq:V_j_ell_p_ball_exact} yields~\eqref{eq:V_j_balls}.
\end{example}

\begin{example}[$p=1$: crosspolytopes]
The case \(p=1\) corresponds to the regular crosspolytope
\[
\mathbb B_1^n
=
\Bigl\{x\in\mathbb R^n:\ |x_1|+\cdots+|x_n|\le 1\Bigr\}
=
\operatorname{conv}\{\pm e_1,\dots,\pm e_n\},
\]
where $\{e_1,\dots,e_n\}$ is the standard basis of $\R^n$. Its intrinsic volumes are known explicitly; see~\cite{betke_henk} and also~\cite[Section~15.2.3]{HenkRichterGebertZiegler2017}. For \(j=0,\dots,n-1\), one has
\begin{equation}\label{eq:V_j_crosspolytope}
V_j(\mathbb B_1^n)
=
2^{j+1}\binom{n}{j+1}\frac{j+1}{j!}
\int_0^\infty
\varphi(\sqrt{j+1}\,t)\,
(2\Phi(t)-1)^{\,n-j-1}\,dt,
\end{equation}
where
\begin{equation}\label{eq:varphi_Phi_def}
\varphi(t)=\frac{1}{\sqrt{2\pi}}e^{-t^2/2}
\qquad\text{and}\qquad
\Phi(t)=\int_{-\infty}^t \varphi(u)\,du
\end{equation}
denote the standard normal density and distribution function, respectively.
In the remaining case \(j=n\), one has $V_n(\mathbb B_1^n)=\Vol_n(\mathbb B_1^n)=2^n/n!$.

Although the case \(p=1\) is excluded from Theorem~\ref{theo:V_j_ell_p_ball_exact}, because the integral defining \(\mathcal J_p(\lambda)\) diverges at \(p=1\), formula~\eqref{eq:V_j_crosspolytope} can nevertheless be recovered as a limiting case of~\eqref{eq:V_j_ell_p_ball_exact}. Indeed, as \(p\downarrow 1\), the \(\ell_p\)-balls \(\mathbb B_p^n\) converge to \(\mathbb B_1^n\) in the Hausdorff metric, and hence $V_j(\mathbb B_p^n)\to V_j(\mathbb B_1^n)$
for every \(j=0,\dots,n\), since intrinsic volumes are continuous with respect to Hausdorff convergence. On the other hand, one verifies that the right-hand side of~\eqref{eq:V_j_ell_p_ball_exact} converges to the right-hand side of~\eqref{eq:V_j_crosspolytope} as \(p\downarrow 1\). The key step is to observe that, for every fixed $\theta>0$,
\begin{multline*}
\lim_{p\downarrow 1} \mathcal I_p(\theta) =
\lim_{p\downarrow 1} \mathcal K_p(\theta) = 2e^{-\theta}\quad\text{and, using the substitution }u=x^{p-1},\\
\lim_{p\downarrow 1} (p-1)\mathcal J_p(\theta)
=2\lim_{p\downarrow 1} \int_0^{\infty}e^{-u^\frac{p}{p-1}-\theta u^2}\,du
= 2\int_0^1 e^{-\theta u^2}\,du
= \sqrt{\frac{\pi}{\theta}}\bigl(2\Phi(\sqrt{2\theta})-1\bigr).
\end{multline*}

\end{example}

\begin{example}[$p=\infty$: cubes]\label{ex:cubes_explicit}
The limiting case \(p=\infty\) corresponds to the cube $\mathbb B_\infty^n=[-1,1]^n$.
Its intrinsic volumes are given by
\begin{equation}\label{eq:V_j_cubes}
V_j(\mathbb B_\infty^n)=\binom{n}{j}2^j,
\qquad j=0,1,\dots,n.
\end{equation}
Moreover, it can be shown that the right-hand side of~\eqref{eq:V_j_ell_p_ball_exact} converges to \(\binom{n}{j}2^j\) as \(p\to\infty\), so that~\eqref{eq:V_j_cubes} is recovered as the limiting case \(p\to\infty\). The key step is to observe that, for every fixed $\theta>0$, one has
\begin{align*}
&\lim_{p\to\infty} \mathcal I_p(\theta) =  2,\\
&\lim_{p\to\infty} \theta^{1/2}p\,\mathcal J_p(\theta) = 2\theta^{1/2}\int_0^\infty e^{-u-\theta u^2}\,du=2\int_0^\infty e^{-v\theta^{-1/2}-v^2}\,du=:\widetilde{\mathcal{J}}_{\infty}(\theta),\\
&\lim_{p\to\infty} \theta^{-1/2} p\,\mathcal K_p(\theta)=2\theta^{-1/2}\int_0^\infty u\,e^{-u-\theta u^2}\,du=2\theta^{-3/2}\int_0^\infty v e^{-v\theta^{-1/2}-v^2}\,du=2(\widetilde{\mathcal{J}}_{\infty}(\theta))^{\prime}.
\end{align*}
Formula~\eqref{eq:V_j_cubes}, for $j<n$, now follows from~\eqref{eq:V_j_ell_p_ball_exact} by noting that
$$
\int_0^{\infty}(\widetilde{\mathcal{J}}_{\infty}(\theta))^{n-j-1}2(\widetilde{\mathcal{J}}_{\infty}(\theta))^{\prime}\,d\theta=2\int_{\widetilde{\mathcal{J}}_{\infty}(0)}^{\widetilde{\mathcal{J}}_{\infty}(\infty)}z^{n-j-1}\,dz=2\int_{0}^{\sqrt{\pi}}z^{n-j-1}\,dz=\frac{2\pi^{\frac{n-j}{2}}}{n-j}.
$$
\end{example}

\begin{example}[Surface area]
Fix \(1<p<\infty\) and \(n\ge 2\). Since $\mathcal H^{n-1}(\partial \mathbb B_p^n)=2V_{n-1}(\mathbb B_p^n)$,
the surface area of \(\mathbb B_p^n\) can be obtained from Theorem~\ref{theo:V_j_ell_p_ball_exact} by taking \(j=n-1\). After an integration by parts, this yields
\[
\mathcal H^{n-1}(\partial \mathbb B_p^n)
=
\frac{p}{2\sqrt\pi\,\Gamma\!\left(1+\frac{n-1}{p}\right)}
\int_0^\infty
\frac{\mathcal I_p(0)^n-\mathcal I_p(\theta)^n}{\theta^{3/2}}\,d\theta,
\quad
\mathcal I_p(0)=\int_{\mathbb R}e^{-|x|^p}\,dx
=
2\Gamma\!\left(1+\frac1p\right).
\]
We omit the derivation here, since a more general formula for the integrals of
\(|x_1|^{\lambda_1}\cdots |x_n|^{\lambda_n}\) with respect to the surface measure
will be obtained later in Proposition~\ref{prop:mixed_moments_surface_area_measure}.
\end{example}

Theorem~\ref{theo:V_j_ell_p_ball_exact} admits the following extension to coordinate-weighted \(\ell_p\)-balls.

\begin{theorem}[Intrinsic volumes of weighted \(\ell_p\)-balls]\label{theo:V_j_ell_p_ellipsoid_exact}
Let \(1<p<\infty\), \(n\ge 2\), and let $a=(a_1,\dots,a_n)\in(0,\infty)^n$
be a vector of positive weights. Define the weighted\footnote{The notation $\mathbb{B}_p^n(a)$ should not be confused with its common use to denote the standard unit $\ell_p$-ball $\mathbb{B}_p^n$ centered at $a$.} \(\ell_p\)-ball by
\[
\mathbb B_p^n(a)\equiv \mathbb B_p^n(a_1,\dots,a_n)
:=
\left\{
x\in\mathbb R^n:\ \sum_{r=1}^n |a_r x_r|^p\le 1
\right\}.
\]
Then, for every \(j\in\{0,\dots,n-1\}\), one has
\begin{multline*}
V_j(\mathbb B_p^n(a))
=
\frac{p(p-1)^{n-1-j}}{2\pi^{\frac{n-j}{2}}\Gamma\!\left(1+\frac jp\right)}
\sum_{i=1}^n
\sum_{\substack{I\subseteq\{1,\dots,n\}\setminus\{i\}\\ |I|=n-1-j}}
\\
\times \int_0^\infty \theta^{\frac{n-j}{2}-1}
a_i \mathcal K_p\!\left(a_i^2 \theta\right)
\prod_{r\in I} a_r \mathcal J_p\!\left(a_r^2 \theta\right)
\prod_{r\notin I\cup\{i\}} a_r^{-1} \mathcal I_p\!\left(a_r^2\theta\right)
\,d\theta,
\end{multline*}
where the functions $\mathcal I_p, \mathcal J_p, \mathcal K_p$ are defined by~\eqref{eq:def_I_p}.
\end{theorem}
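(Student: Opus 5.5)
We would prove Theorem~\ref{theo:V_j_ell_p_ellipsoid_exact} by combining the curvature-integral representation of intrinsic volumes with a Gamma-function linearization that makes the integrand factorize over coordinates. Write $F(x)=\sum_r |a_rx_r|^p$, so that $K=\mathbb B_p^n(a)=\{F\le 1\}$. For $p>1$ the boundary is $C^1$ everywhere and $C^2$ away from the coordinate hyperplanes, which form an $\mathcal H^{n-1}$-null set. We start from the classical formula
\[
V_j(K)=\frac{1}{(n-j)\kappa_{n-j}}\int_{\partial K}\sigma_{n-1-j}(\kappa_1,\dots,\kappa_{n-1})\,d\mathcal H^{n-1}.
\]
Here $\sigma_k$ is the elementary symmetric polynomial of the principal curvatures. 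A sanity check: $j=n-1$ gives half the surface area. Justifying this formula where curvature blows up or vanishes, i.e.\ near $x_r=0$ when $p<2$ or $p>2$, needs an approximation argument. One option is smooth strictly convex approximants with Hausdorff continuity of $V_j$. Another is to note that $\sigma_k\,d\mathcal H^{n-1}$ is the absolutely continuous representation of the curvature measure $\Phi_j$, and that the singular set is too small to carry mass.

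\textbf{Step 1 (curvatures of a level set).} Set $g=\nabla F$, with $g_r=p\,a_r^p|x_r|^{p-1}\operatorname{sgn}x_r$, and $D=\nabla^2F=\operatorname{diag}(d_r)$, with $d_r=p(p-1)a_r^p|x_r|^{p-2}$. The second fundamental form is the compression of $D/|g|$ to $g^\perp$. For a diagonal matrix compressed to the hyperplane orthogonal to $g$, one has the identity
\[
\sigma_k(\kappa)=|g|^{-k-2}\sum_{|S|=k}\prod_{r\in S}d_r\sum_{i\notin S}g_i^2 .
\]
We would verify it by expanding $\det(I+tD)$ restricted to $g^\perp$ through the bordered determinant. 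With $k=n-1-j$, the pair $(S,i)$ becomes exactly $(I,i)$ in the theorem.

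\textbf{Step 2 (from the boundary to $\mathbb R^n$).} We use cone coordinates $x=t^{1/p}y$ with $y\in\partial K$. Euler's relation $\langle y,g(y)\rangle=p$ gives
\[
dx=\tfrac1p\,t^{n/p-1}\,\tfrac{p}{|g(y)|}\,dt\,d\mathcal H^{n-1}(y).
\]
For $h$ homogeneous of degree $\beta$ this yields
\[
\int_{\mathbb R^n}h\,e^{-F}dx=\tfrac1p\,\Gamma\!\bigl(\tfrac{n+\beta}p\bigr)\int_{\partial K}\tfrac{p\,h}{|g|}\,d\mathcal H^{n-1}.
\]
We apply this to $h=|g|\sigma_{n-1-j}$, noting that $g$ and $D$ are themselves homogeneous, of degrees $p-1$ and $p-2$. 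A direct count then gives the degree of $h$, and hence the prefactor. We expect $(n+\beta)/p=1+j/p$; checking this is routine bookkeeping.

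\textbf{Step 3 (linearization and factorization).} The only non-factorizing piece is $|g|^{-(n-j)}$. We replace it by
\[
|g|^{-(n-j)}=\frac{1}{\Gamma(\frac{n-j}2)}\int_0^\infty\theta^{\frac{n-j}2-1}e^{-\theta|g|^2}\,d\theta .
\]
After Fubini, the integrand $e^{-F-\theta|g|^2}\prod_{r\in I}d_r\,g_i^2$ is a product over coordinates. The substitution $u_r=a_rx_r$ then produces the three one-dimensional factors. Coordinates $r\notin I\cup\{i\}$ give $a_r^{-1}\mathcal I_p$, coordinates $r\in I$ give $\mathcal J_p$, and the coordinate $i$ gives $\mathcal K_p$, each at argument $p^2a_r^2\theta$ up to powers of $a_r$ and $p$. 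The factors $(p-1)^{n-1-j}$ come from the $d_r$. A final rescaling $\theta\mapsto\theta/p^2$ absorbs the stray powers of $p$. It also absorbs the $a_r$ powers into the stated $a_r^{\pm1}$, using $a_r^{p}$ from $d_r,g_i$ and $a_r^{-1}$ from the Jacobian. Collecting constants with $(n-j)\kappa_{n-j}\Gamma(\frac{n-j}2)=2\pi^{(n-j)/2}$ should give exactly the displayed prefactor.

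The main obstacle is Step~1, together with the regularity justification near the coordinate hyperplanes: the compressed-determinant identity has to be proved cleanly, and the boundary formula must hold despite degenerate curvature there. The rest is exponent bookkeeping, which we would cross-check against the case $a=(1,\dots,1)$ and Theorem~\ref{theo:V_j_ell_p_ball_exact}, and against $p=2$ (ellipsoids).
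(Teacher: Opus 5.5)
Your computational core is the paper's. Step~1 is Proposition~\ref{prop:curvatures_separable_hypersurface_symmetric_poly}, Step~2 is the homogeneity/coarea step of Proposition~\ref{prop:key_integral}, and Step~3 is its Gamma-integral linearization of $|\nabla F|^{-(n-j)}$. Your bookkeeping is also right: the degree $p-n+j$ of $|g|\sigma_{n-1-j}$ gives $\Gamma(1+\frac jp)$, and $p^{n-1-j}\cdot p^2\cdot p^{-(n-j)}=p$.

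The gap is the step you flag as the main obstacle and leave open: the representation $V_j(K)=\frac{1}{(n-j)\kappa_{n-j}}\int_{\partial K}\sigma_{n-1-j}\,d\mathcal H^{n-1}$ when $1<p<2$. For $p\ge 2$ the boundary is $C^2$ everywhere and there is nothing to do. What is needed is $\Phi_j(K,B_0)=0$ for $B_0:=\partial K\cap\{x_1\cdots x_n=0\}$, and neither option you list establishes it.
\begin{itemize}
\item That $B_0$ is $\mathcal H^{n-1}$-null is irrelevant for $j<n-1$: lower curvature measures of polytopes are carried by $\mathcal H^{n-1}$-null skeleta. Saying that $\sigma_{n-1-j}\,d\mathcal H^{n-1}$ is the absolutely continuous representation of $\Phi_j$ simply restates the claim.
\item Smooth approximation only relocates the difficulty. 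Weak convergence $\Phi_j(K_\varepsilon,\cdot)\to\Phi_j(K,\cdot)$ does not prevent mass from piling up on $B_0$ in the limit. You would need a uniform-integrability bound for $\sigma_{n-1-j}$ near the coordinate hyperplanes, uniform in $\varepsilon$, and you do not provide one.
\end{itemize}

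The paper supplies the missing idea as follows. Since $q=\frac p{p-1}>2$, the support function $(\sum_i|u_i/a_i|^q)^{1/q}$ is $C^2$ on $\mathbb S^{n-1}$, so the area measures are absolutely continuous with respect to spherical Lebesgue measure. The Gauss map is a bijection $\partial K\to\mathbb S^{n-1}$ sending $B_0$ into the null set $\mathbb S^{n-1}\cap\{u_1\cdots u_n=0\}$. Curvature measures are the images of area measures under the inverse Gauss map, so they vanish on $B_0$.

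An even shorter argument is available here. By \eqref{eq:nu_outer_normal}, $x_i=0$ forces $\nu_i(x)=0$. Since normals are unique, every point of the local parallel set $A_\rho(K,B_0)$ has the form $x+s\nu(x)$ with $x\in B_0$. Hence $A_\rho(K,B_0)\subseteq\bigcup_i\{y_i=0\}$ has volume zero for every $\rho$, and the local Steiner formula gives $\Phi_j(K,B_0)=0$ for all $j$.

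Combine this with \eqref{eq:density_curvature_measure} on the open $C^2$ part $\partial K\setminus B_0$, using countable additivity over compact pieces. This yields exactly the representation your Steps~1--3 start from.
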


As the following examples show, Theorem~\ref{theo:V_j_ell_p_ellipsoid_exact} unifies and generalizes several previously known explicit formulas, including those for ellipsoids, coordinate-weighted crosspolytopes, and rectangular boxes.

\begin{example}[$p=2$: ellipsoids]\label{ex:ellipsoids_intrinsic_volumes}
In the case \(p=2\), the weighted \(\ell_p\)-ball reduces to the ellipsoid
\[
\mathbb B_2^n(a)
=
\left\{
x\in\mathbb R^n:\sum_{r=1}^n a_r^2 x_r^2\le 1
\right\}.
\]
Explicit formulas for the intrinsic volumes of ellipsoids were obtained by~\citet{GusakovaSpodarevZaporozhets2025}. We now examine the specialization of Theorem~\ref{theo:V_j_ell_p_ellipsoid_exact} to \(p=2\) and compare the resulting formula with that of~\cite{GusakovaSpodarevZaporozhets2025}. Using the identities for \(\mathcal I_2\), \(\mathcal J_2\), and \(\mathcal K_2\) from~\eqref{eq:I_J_K_for_p_equal_2}, Theorem~\ref{theo:V_j_ell_p_ellipsoid_exact} yields
\[
V_j(\mathbb B_2^n(a))
=
\frac{\kappa_j}{2}
\left(\prod_{r=1}^n a_r^{-1}\right)
\sum_{i=1}^n
a_i^2\,\sigma_{n-1-j}(a_1^2,\dots,\widehat{a_i^2},\dots,a_n^2)
\int_0^\infty
\frac{\theta^{\frac{n-j}{2}-1}\,d\theta}
{(1+a_i^2\theta)\prod_{r=1}^n \sqrt{1+a_r^2\theta}},
\]
where \(\widehat{a_i^2}\) indicates that the variable \(a_i^2\) is omitted, and
\(\sigma_m(x_1,\dots,x_N)\) denotes the \(m\)-th elementary symmetric polynomial,
\begin{equation}\label{eq:sigma_eleme_symm_poly}
\sigma_m(x_1,\dots,x_N)
:=
\sum_{\substack{J\subseteq\{1,\dots,N\}\\ |J|=m}}
\prod_{r\in J} x_r =  \sum_{1\leq r_1 < \ldots < r_{m} \leq N} x_{r_1} \ldots x_{r_{m}}.
\end{equation}
Introducing the semiaxes \(b_i:=a_i^{-1}\) and making the substitution \(\theta=t^{-2}\), we obtain
\begin{equation}\label{eq:V_j_ellipsoids_1}
V_j(\mathbb B_2^n(b_1^{-1},\ldots, b_n^{-1}))
=
\kappa_j
\sum_{i=1}^n
b_i^2\,
\sigma_j\!\bigl(b_1^2,\dots,\widehat{b_i^2},\dots,b_n^2\bigr)
\int_0^\infty
\frac{t^{j+1}\,dt}
{(1+b_i^2t^2)\prod_{r=1}^n (1+b_r^2t^2)^{1/2}},
\end{equation}
for \(j=0,\dots,n-1\).
The formula proved in~\cite[Theorem~1]{GusakovaSpodarevZaporozhets2025} is written in a slightly different form, namely
\begin{equation}\label{eq:V_j_ellipsoids_2}
V_j(\mathbb B_2^n(b_1^{-1},\ldots, b_n^{-1}))
=
\kappa_j
\sum_{i=1}^n
b_i^2\,\sigma_{j-1}\!\bigl(b_1^2,\dots,\widehat{b_i^2},\dots,b_n^2\bigr)
\int_0^\infty
\frac{t^{j-1}\,dt}
{(1+b_i^2t^2)\prod_{r=1}^n (1+b_r^2t^2)^{1/2}},
\end{equation}
for \(j=1,\dots,n\). For \(j=1,\dots,n-1\), the right-hand sides of~\eqref{eq:V_j_ellipsoids_1} and~\eqref{eq:V_j_ellipsoids_2} agree, although verifying this directly is non-trivial.
\end{example}

\begin{example}[$p=1$: weighted crosspolytopes]
The case \(p=1\) corresponds to the weighted crosspolytope
\[
\mathbb B_1^n(a)
=
\Bigl\{x\in\mathbb R^n:\ a_1|x_1|+\cdots+a_n|x_n|\le 1\Bigr\}
=
\operatorname{conv}\{\pm e_1/a_1,\dots,\pm e_n/a_n\}.
\]
Explicit formulas for its intrinsic volumes were obtained in~\cite[Corollary~2.1]{HenkHernandezCifre2008a}; see also~\cite[Theorem~3.6]{KabluchkoZaporozhets2019} for a more general result. In particular, for \(j=0,\dots,n-1\), one has
\[
V_j(\mathbb B_1^n(a))
=
\frac{2^{j+1}}{j!}
\sum_{\substack{L\subseteq \{1,\dots,n\}\\ |L|=j+1}}
\frac{\sum_{i\in L} a_i^2}{\prod_{i\in L} a_i}
\int_0^\infty
\varphi \Biggl(
x\sqrt{\sum_{i\in L} a_i^2}
\Biggr)
\prod_{i\notin L}
\bigl(2\Phi(a_i x)-1\bigr)\,dx,
\]
see~\cite[Example~3.8]{KabluchkoZaporozhets2019}, where \(\varphi\) and \(\Phi\) are as in~\eqref{eq:varphi_Phi_def}.
As in the unweighted case, this formula may be recovered by passing to the limit \(p\downarrow 1\) in Theorem~\ref{theo:V_j_ell_p_ellipsoid_exact}.
\end{example}

\begin{example}[$p=\infty$: rectangular boxes]
As \(p\to\infty\), the weighted \(\ell_p\)-balls \(\mathbb B_p^n(a)\) converge in the Hausdorff metric to the rectangular box
$
\mathbb B_\infty^n(a)
:=
\prod_{i=1}^n [-a_i^{-1},a_i^{-1}].
$
Its intrinsic volumes are given by
\[
V_j(\mathbb B_\infty^n(a))
=
2^j\,\sigma_j(a_1^{-1},\dots,a_n^{-1}),
\qquad j=0,\dots,n,
\]
where \(\sigma_j\) denotes the \(j\)-th elementary symmetric polynomial defined in~\eqref{eq:sigma_eleme_symm_poly}. As in the unweighted case,  this formula may also be obtained as the limit \(p\to\infty\) in Theorem~\ref{theo:V_j_ell_p_ellipsoid_exact}.
\end{example}

\begin{example}[Surface area]
Taking \(j=n-1\) in Theorem~\ref{theo:V_j_ell_p_ellipsoid_exact},
it is possible to obtain the following formula for the surface area of the weighted \(\ell_p\)-ball:
\[
\mathcal H^{n-1}\!\bigl(\partial \mathbb B_p^n(a)\bigr)
=
\frac{
p\,\prod_{k=1}^n a_k^{-1}
}{
2\,\Gamma\left(1 + \frac{n-1}{p}\right)\sqrt\pi
}
\int_0^\infty
\frac{
\bigl(2\Gamma(1+\tfrac1p)\bigr)^n
-
\prod_{k=1}^n \mathcal I_p(\theta a_k^2)
}{
\theta^{3/2}
}\,d\theta.
\]
A more general formula will be given later in Proposition~\ref{prop:mixed_moments_surface_area_measure}; the above identity is obtained from it by specializing to \(\lambda_1=\ldots=\lambda_n=0\).

In the special case \(p=2\), that is, for ellipsoids, alternative expressions for the surface area are known. In arbitrary dimension, a representation in terms of the Lauricella hypergeometric function \(F_D\) was given by Rivin; see~\cite[Section~3, Equations~(11)-(13)]{Rivin2007}.   
See also~\cite{Tee2005,KabluchkoZaporozhets2014} for further results on the surface area of ellipsoids.
\end{example}

\subsection{Asymptotic formulas for intrinsic volumes}
The exact formula for the intrinsic volumes of \(\mathbb B_p^n\) given in Theorem~\ref{theo:V_j_ell_p_ball_exact}, combined with Laplace's method, makes it possible to derive asymptotic formulas for \(V_{j(n)}(\mathbb B_p^n)\) as \(n\to\infty\), in several regimes determined by the asymptotic behavior of the index \(j(n)\in\{0,\dots,n\}\). For simplicity, we restrict ourselves here to the unweighted case, that is, \(a_1=\cdots=a_n=1\), although some of the results admit extensions to the weighted setting.

Before stating the asymptotics, we record the basic properties of the phase function that governs the Laplace method in the bulk regime. In what follows, the notation \(x_n\sim y_n\) as \(n\to\infty\) means that \(x_n/y_n\to 1\).
\begin{proposition}[Phase function and its maximizer]\label{prop:maximizer_Phi_p_beta}
Fix \(1<p<\infty\) and \(\beta\in(0,1)\). Define the function $\Psi_{p,\beta}:(0,\infty)\to\mathbb R$
by
\begin{equation}\label{eq:Phi_alpha_def_prop_maximizer_theo_asympt_V_j}
\Psi_{p,\beta}(\theta)
=
\frac{1-\beta}{2}\log \theta
+\beta\log \mathcal I_p(\theta)
+(1-\beta)\log \mathcal J_p(\theta),
\qquad \theta>0.
\end{equation}
Then \(\Psi_{p,\beta}\) has a unique global maximizer \(\theta_{p,\beta}\in(0,\infty)\). Moreover, $\Psi_{p,\beta}''(\theta_{p,\beta})<0$,
and \(\theta_{p,\beta}\) is the unique solution in \((0,\infty)\) of the equation
\begin{equation}\label{eq:theta_alpha_equation_theo_V_j_asympt}
\theta_{p,\beta}\,\frac{\mathcal J_p(\theta_{p,\beta})}{\mathcal I_p(\theta_{p,\beta})}
=
\frac{(1-\beta)p}{2(p-1)\beta}.
\end{equation}
For $p=2$, this simplifies to $\theta_{2,\beta}=(1-\beta)/\beta$ and
\begin{equation}\label{eq:Phi_alpha_def_prop_maximizer_theo_asympt_V_j_p=2}
\Psi_{2,\beta}(\theta)=\frac{1-\beta}{2}\log\theta+\frac{1}{2}\log\pi-\frac{1}{2}\log(1+\theta),\quad \theta>0.
\end{equation}
\end{proposition}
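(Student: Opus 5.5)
Compute the derivative of $\Psi_{p,\beta}$ and recast the critical-point equation in terms of a single monotone function. Since $\mathcal I_p'=-\mathcal K_p$ and $\mathcal J_p'(\theta)=-\int|x|^{3p-4}e^{-|x|^p-\theta|x|^{2p-2}}dx$, we have
\[
\Psi_{p,\beta}'(\theta)=\frac{1-\beta}{2\theta}-\beta\frac{\mathcal K_p(\theta)}{\mathcal I_p(\theta)}+(1-\beta)\frac{\mathcal J_p'(\theta)}{\mathcal J_p(\theta)}.
\]

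The key observation is an integration-by-parts identity relating $\mathcal K_p,\mathcal J_p,\mathcal J_p'$ and $\mathcal I_p$. Differentiate $x\mapsto e^{-x^p-\theta x^{2p-2}}$ on $(0,\infty)$, multiply by $x$, and integrate. This gives
\[
\mathcal I_p(\theta)=p\,\mathcal F_p(\theta;p)+(2p-2)\theta\,\mathcal K_p(\theta).
\]
Similarly, with weight $x^{p-1}$, we get
\[
(p-1)\mathcal J_p=p\,\mathcal F_p(\theta;2p-2)+(2p-2)\theta\,\mathcal F_p(\theta;3p-4),
\]
that is, $(p-1)\mathcal J_p(\theta)=p\,\mathcal K_p(\theta)-(2p-2)\theta\,\mathcal J_p'(\theta)$. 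Substituting $\mathcal J_p'/\mathcal J_p=\bigl(p\mathcal K_p-(p-1)\mathcal J_p\bigr)/\bigl((2p-2)\theta\mathcal J_p\bigr)$ into $\Psi'$, the $\frac{1-\beta}{2\theta}$ terms cancel, and
\[
\Psi_{p,\beta}'(\theta)=\mathcal K_p(\theta)\Bigl(\frac{(1-\beta)p}{2(p-1)\theta\,\mathcal J_p(\theta)}-\frac{\beta}{\mathcal I_p(\theta)}\Bigr).
\]
Hence the sign of $\Psi'$ equals the sign of $\frac{(1-\beta)p}{2(p-1)\beta}-G(\theta)$, where $G(\theta):=\theta\mathcal J_p(\theta)/\mathcal I_p(\theta)$. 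This already shows that critical points are exactly the solutions of \eqref{eq:theta_alpha_equation_theo_V_j_asympt}.

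The remaining step is to show that $G$ is a strictly increasing bijection $(0,\infty)\to(0,\infty)$. This gives existence and uniqueness of $\theta_{p,\beta}$, shows it is a global maximizer (since $\Psi'>0$ to its left and $\Psi'<0$ to its right), and yields $\Psi''(\theta_{p,\beta})=-\mathcal K_p\beta G'(\theta_{p,\beta})/(\mathcal I_p G)<0$ provided $G'>0$ strictly.

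The limits are straightforward. As $\theta\to0$, $G\to0$ since $\mathcal J_p(0)=2\Gamma((p-1)/p)/p<\infty$ for $p>1$. As $\theta\to\infty$, substitute $x=\theta^{-1/(2p-2)}y$: Laplace-type scaling gives $\mathcal I_p\sim c_0\theta^{-1/(2p-2)}$ and $\mathcal J_p\sim c_1\theta^{-(p-1)/(2p-2)}=c_1\theta^{-1/2}$, so $G\sim (c_1/c_0)\theta^{1/2}\cdot\theta^{1/(2p-2)}\to\infty$.

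Monotonicity is the main obstacle. We have
\[
(\log G)'=\frac1\theta+\frac{\mathcal J_p'}{\mathcal J_p}+\frac{\mathcal K_p}{\mathcal I_p}
=\frac1\theta-\E_\theta^{(p-2)}[|X|^{2p-2}]+\E_\theta^{(0)}[|X|^{2p-2}],
\]
where $\E^{(\nu)}_\theta$ denotes expectation under the density proportional to $|x|^\nu e^{-|x|^p-\theta|x|^{2p-2}}$. For $p\ge2$ the tilt $|x|^{p-2}$ is increasing in $|x|$, so the middle expectation dominates, and a naive bound fails. Instead, I would use the identity above to rewrite $\theta\,\mathcal J_p'/\mathcal J_p=\frac{p\mathcal K_p}{(2p-2)\mathcal J_p}-\frac12$. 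Then
\[
\theta(\log G)'=\frac12+\frac{\theta\mathcal K_p}{\mathcal I_p}-\frac{p\,\mathcal K_p}{(2p-2)\mathcal J_p}.
\]
Next I would show this is positive, reducing to an inequality of the form
\[
(p-1)\mathcal I_p\mathcal J_p>\mathcal K_p\bigl(p\mathcal I_p-(2p-2)\theta\mathcal J_p\bigr).
\]
Using $\mathcal I_p=p\mathcal F(p)+(2p-2)\theta\mathcal K_p$, the right-hand side becomes a quadratic expression in the moments $\mathcal F_p(\theta;\nu)$. I would then try to derive the inequality from log-convexity of $\nu\mapsto\mathcal F_p(\theta;\nu)$ (Cauchy--Schwarz/Hölder) combined with the integration-by-parts relations. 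If that fails, the fallback is to change variables $s=|x|^{p-1}$ and argue via total positivity (a Chebyshev-type covariance inequality for monotone functions of $|x|$ under a log-concave law).

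Finally, for $p=2$ the relation \eqref{eq:I_J_K_for_p_equal_2} gives $G(\theta)=\theta$, so $\theta_{2,\beta}=(1-\beta)/\beta$, and substituting into \eqref{eq:Phi_alpha_def_prop_maximizer_theo_asympt_V_j} gives \eqref{eq:Phi_alpha_def_prop_maximizer_theo_asympt_V_j_p=2}.
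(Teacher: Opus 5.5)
Your architecture is the paper's. You use the same integration-by-parts identity $(p-1)\mathcal J_p=p\mathcal K_p-(2p-2)\theta\mathcal J_p'$, the same factorization of $\Psi_{p,\beta}'$, and the same auxiliary function $G=\theta\mathcal J_p/\mathcal I_p$. You also obtain the same limits at $0$ and $\infty$, and the same formula for $\Psi_{p,\beta}''(\theta_{p,\beta})$. The gap is in the step you single out as the main obstacle: as written, strict monotonicity of $G$ is never proved. You only outline attempts via log-convexity of $\nu\mapsto\mathcal F_p(\theta;\nu)$ or a total-positivity argument, with a fallback in case the first fails. Existence, uniqueness, global maximality and $\Psi''_{p,\beta}(\theta_{p,\beta})<0$ all rest on $G'>0$, so the proof is incomplete at exactly this point.

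The obstacle comes from a sign slip. Your rewriting $\theta\mathcal J_p'/\mathcal J_p=\frac{p\mathcal K_p}{(2p-2)\mathcal J_p}-\frac12$ is correct. Inserting it into $\theta(\log G)'=1+\theta\mathcal J_p'/\mathcal J_p+\theta\mathcal K_p/\mathcal I_p$ gives
\[
\theta(\log G)'(\theta)=\frac12+\frac{p\,\mathcal K_p(\theta)}{(2p-2)\mathcal J_p(\theta)}+\frac{\theta\,\mathcal K_p(\theta)}{\mathcal I_p(\theta)},
\]
with a plus sign in front of the middle term. This expression is manifestly positive. Multiplying by $G/\theta$ gives exactly the paper's formula
\[
G'=\mathcal I_p^{-2}\bigl(\mathcal I_p(\tfrac12\mathcal J_p+\tfrac{p}{2(p-1)}\mathcal K_p)+\theta\mathcal J_p\mathcal K_p\bigr)>0.
\]

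Even your mis-signed version needed no moment inequalities. The same identity together with $\mathcal J_p'<0$ gives $\frac12-\frac{p\mathcal K_p}{(2p-2)\mathcal J_p}=-\theta\mathcal J_p'/\mathcal J_p>0$, i.e.\ $(p-1)\mathcal J_p>p\mathcal K_p$. This immediately yields your target inequality $(p-1)\mathcal I_p\mathcal J_p>\mathcal K_p\bigl(p\mathcal I_p-(2p-2)\theta\mathcal J_p\bigr)$. With the sign corrected, your argument becomes the paper's proof.
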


We are now ready to state the asymptotic formulas for the intrinsic volumes of \(\mathbb B_p^n\).

\begin{theorem}[Asymptotic formulas for intrinsic volumes of \(\mathbb B_p^n\)]\label{theo:exact_asympt_V_j}
Fix \(1<p<\infty\). The following assertions hold as \(n\to\infty\).

\begin{itemize}
\item[(i)] \emph{Bulk regime.}
Let \(j(n)\in\{0,\dots,n\}\) be such that $\alpha(n) := j(n)/n \to \alpha \in (0,1)$, as $n\to\infty$.
Let \(\theta_{p,\alpha}\in(0,\infty)\) be the unique maximizer of the function \(\Psi_{p,\alpha}\) from Proposition~\ref{prop:maximizer_Phi_p_beta}. Then
\begin{equation}\label{eq:intrinsic_ell_p_ball_central_exact}
V_{j(n)}(\mathbb B_p^n)
\sim
\frac{p(p-1)^{n-j(n)-1}\,n\binom{n-1}{j(n)}}{2\pi^{(n-j(n))/2}\,
\Gamma\!\left(\frac{j(n)+p}{p}\right)}
\,
\frac{\mathcal K_p(\theta_{p,\alpha})}{\theta_{p,\alpha}\,\mathcal J_p(\theta_{p,\alpha})}
\,
e^{\,n\Psi_{p,\alpha(n)}(\theta_{p,\alpha(n)})}
\sqrt{\frac{2\pi}{n\,|\Psi_{p,\alpha}''(\theta_{p,\alpha})|}}.
\end{equation}

\item[(ii)] \emph{Left-edge regime.}
Let \(j\in\N\cup\{0\}\) be fixed. Then
\begin{equation}\label{eq:V_j_exact_asymptotics_left_edge}
V_j(\mathbb B_p^n)
\sim
\frac{1}{j!}
\left(
(2\sqrt{\pi})^{1/p}
\left(
\frac{\Gamma\!\left(\frac{1}{2p-2}\right)}{p-1}
\right)^{1-1/p}
\right)^j
n^{\,j(1-1/p)}.
\end{equation}

\item[(iii)] \emph{Right-edge regime.}
Let \(m\in\N\) be fixed. Then
\begin{equation}\label{eq:V_j_exact_asymptotics_right_edge}
V_{n-m}(\mathbb B_p^n)
\sim
\frac{\Gamma\!\left(\frac m2\right)}{2\,\Gamma(m)}
\left(
\frac{p(p-1)\Gamma\!\left(1-\frac1p\right)}
{\pi\,\Gamma\!\left(\frac1p\right)}
\right)^{m/2}
\frac{\left(\frac{2}{p}\Gamma\!\left(\frac1p\right)\right)^n\,n^{m/2}}
{\Gamma\!\left(\frac{n+p-m}{p}\right)}.
\end{equation}
\end{itemize}
\end{theorem}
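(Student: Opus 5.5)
All three regimes start from the exact formula \eqref{eq:V_j_ell_p_ball_exact} of Theorem~\ref{theo:V_j_ell_p_ball_exact}. We rewrite its integrand as
\[
\theta^{\frac{n-j}{2}-1}\mathcal I_p^j\mathcal J_p^{n-j-1}\mathcal K_p
=e^{n\Psi_{p,j/n}(\theta)}\,\frac{\mathcal K_p(\theta)}{\theta\,\mathcal J_p(\theta)} .
\]
We then use $\kappa_{n-j}\Gamma(\frac{n-j}{2})=2\pi^{(n-j)/2}/(n-j)$ and $\binom nj (n-j)=n\binom{n-1}{j}$. This reduces the prefactor to the one displayed in \eqref{eq:intrinsic_ell_p_ball_central_exact}. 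What remains is to find the asymptotics of a single Laplace-type integral $\int_0^\infty e^{n\Psi_{p,\alpha(n)}(\theta)}g(\theta)\,d\theta$, where $g=\mathcal K_p/(\theta\mathcal J_p)$.

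\textbf{Bulk regime.} By Proposition~\ref{prop:maximizer_Phi_p_beta}, $\Psi_{p,\beta}$ has a unique nondegenerate maximizer $\theta_{p,\beta}$, given by the implicit equation \eqref{eq:theta_alpha_equation_theo_V_j_asympt}. The implicit function theorem applied to $\Psi'_{p,\beta}(\theta)=0$ shows that $\beta\mapsto\theta_{p,\beta}$ is continuous, so $\theta_{p,\alpha(n)}\to\theta_{p,\alpha}$. We apply Laplace's method uniformly in the parameter $\beta=\alpha(n)$ near $\alpha$. We expand around $\theta_{p,\alpha(n)}$ and use $\Psi''_{p,\alpha(n)}\to\Psi''_{p,\alpha}<0$ locally uniformly, which yields the Gaussian factor $\sqrt{2\pi/(n|\Psi''|)}$. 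Continuity of $g$ turns $g(\theta_{p,\alpha(n)})$ into $g(\theta_{p,\alpha})$. The tails need uniform control:
\begin{itemize}
\item Near $\theta\to0$, $\mathcal I_p,\mathcal J_p$ are bounded and $\theta^{(1-\beta)/2}\to0$.
\item As $\theta\to\infty$, scaling gives $\mathcal I_p(\theta)\asymp\theta^{-1/(2p-2)}$ and $\mathcal J_p(\theta)\asymp\theta^{-(p-1)/(2p-2)}=\theta^{-1/2}$. Hence $\Psi_{p,\beta}(\theta)\approx-\frac{\beta}{2p-2}\log\theta+\mathrm{const}$, which tends to $-\infty$.
\end{itemize}
Together with the strict gap $\sup_{|\theta-\theta_{p,\alpha}|>\delta}\Psi_{p,\beta}<\Psi_{p,\beta}(\theta_{p,\beta})-\eta$, valid uniformly for $\beta$ near $\alpha$, this makes the tail contributions exponentially negligible. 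The function $g$ causes no trouble: at $\theta\to0$ the factor $1/\theta$ is compensated by $\theta^{(n-j)/2}$, and at infinity it grows at most polynomially.

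\textbf{Left edge, $j$ fixed.} Here the mass of the integral escapes to $\theta\to\infty$, so we rescale. Substituting $x=\theta^{-1/(2p-2)}y$ gives
\[
\mathcal I_p(\theta)=\theta^{-\frac{1}{2p-2}}\int e^{-\theta^{-p/(2p-2)}|y|^p-|y|^{2p-2}}dy\to\theta^{-\frac1{2p-2}}\,2\Gamma(1+\tfrac{1}{2p-2}).
\]
Similarly $\theta^{1/2}\mathcal J_p(\theta)=\int|y|^{p-2}e^{-\varepsilon|y|^p-|y|^{2p-2}}dy$ with $\varepsilon=\theta^{-p/(2p-2)}$. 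Its limit is $\sqrt\pi\cdot2/(p-1)\cdot\ldots$; more precisely, a first-order expansion $\sqrt{\pi}/(p-1)\cdot(1-c\,\theta^{-p/(2p-2)}+\dots)$ is what matters. Raising this to the power $n-j-1$ gives $e^{-cn\theta^{-p/(2p-2)}}$, so the relevant scale is $\theta\asymp n^{(2p-2)/p}$. We substitute $\theta=n^{(2p-2)/p}s$ and pass to the limit by dominated convergence. The limiting integral is a Gamma integral, which should produce the constant in \eqref{eq:V_j_exact_asymptotics_left_edge}. The exponent check works: $\mathcal I_p^j$ contributes $n^{-j/p}$, and combined with $\binom nj\sim n^j/j!$ this gives $n^{j(1-1/p)}$. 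The key technical point is the second-order expansion of $\theta^{1/2}\mathcal J_p$, which yields $c$ through $\int|y|^{2p-2}e^{-|y|^{2p-2}}dy$, together with bounds giving domination.

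\textbf{Right edge, $j=n-m$.} Now $\theta^{m/2-1}$ stays fixed and $\mathcal I_p(\theta)^{n-m}$ forces $\theta\to0$ at scale $1/n$. We use $\mathcal I_p(\theta)=\mathcal I_p(0)\exp\!\bigl(-\theta\mathcal K_p(0)/\mathcal I_p(0)+O(\theta^2)\bigr)$, substitute $\theta=s/n$, and let $\mathcal J_p(\theta)\to\mathcal J_p(0)$ and $\mathcal K_p(\theta)\to\mathcal K_p(0)$. The result is a Gamma integral $\int s^{m/2-1}e^{-s\mathcal K_p(0)/\mathcal I_p(0)}ds$. The explicit values $\mathcal I_p(0)=\frac2p\Gamma(\frac1p)$, $\mathcal J_p(0)=\frac2p\Gamma(1-\frac1p)$ and $\mathcal K_p(0)=\frac2p\Gamma(2-\frac1p)$ then produce \eqref{eq:V_j_exact_asymptotics_right_edge}.

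\textbf{Main obstacle.} I expect the left-edge regime to be the main difficulty. There the integrand concentrates at infinity, and it requires the refined expansion of $\mathcal J_p$ at large $\theta$ with uniform domination.
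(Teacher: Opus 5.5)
Your proposal is correct and follows essentially the same route as the paper. The paper proves the more general mixed-moment asymptotics and then sets all $\lambda_k=0$, using the same ingredients: the prefactor reduction, a Laplace method uniform in $\beta$ near $\alpha$ in the bulk, the large-$\theta$ expansion $\theta^{1/2}\mathcal J_p(\theta)=\frac{\sqrt\pi}{p-1}\bigl(1-\Gamma(\frac{2p-1}{2p-2})\pi^{-1/2}\theta^{-p/(2p-2)}+\dots\bigr)$ with the substitution $u\asymp n\,\theta^{-p/(2p-2)}$ and an exponentially small contribution from $\theta\le T$ at the left edge, and endpoint Laplace asymptotics at $\theta=0$ at the right edge. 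One slip: the limit of $\theta^{1/2}\mathcal J_p(\theta)$ is $\sqrt\pi/(p-1)$, not $2\sqrt\pi/(p-1)$, which is the value your subsequent expansion already uses.
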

\begin{remark}[The volume of $\mathbb B_p^n$]
Applying Stirling's formula to the exact volume formula~\eqref{eq:volume_l_p_ball_exact} yields an asymptotic expression that agrees with the one suggested by Part~(iii) of Theorem~\ref{theo:exact_asympt_V_j} when one formally sets \(m=0\) in~\eqref{eq:V_j_exact_asymptotics_right_edge} and interprets the factor \(\Gamma(0/2)/(2\Gamma(0))\) by its limiting value \(1\).
\end{remark}
\begin{example}[Surface area of \(\mathbb B_p^n\)]
Fix \(1<p<\infty\). Taking \(m=1\) in Part~(iii) of Theorem~\ref{theo:exact_asympt_V_j} yields an asymptotic formula for the surface area of \(\mathbb B_p^n\):
\begin{equation}\label{eq:surface_are_asymptotics}
\mathcal H^{n-1}(\partial \mathbb B_p^n)
=
2V_{n-1}(\mathbb B_p^n)
\sim
\left(
\frac{p(p-1)\Gamma\!\left(1-\frac{1}{p}\right)}
{\Gamma\!\left(\frac{1}{p}\right)}
\right)^{1/2}
\frac{\left(\frac{2}{p}\Gamma\!\left(\frac{1}{p}\right)\right)^n n^{1/2}}
{\Gamma\!\left(\frac{n+p-1}{p}\right)},
\qquad n\to\infty.
\end{equation}
Let $\mathbb D_p^n:=\bigl(\Vol_n(\mathbb B_p^n)\bigr)^{-1/n}\mathbb B_p^n$
denote the \(\ell_p\)-ball in \(\mathbb R^n\) normalized to have volume \(1\). Combining~\eqref{eq:surface_are_asymptotics} with the exact volume formula~\eqref{eq:volume_l_p_ball_exact}, and simplifying with the help of Stirling's approximation and the reflection formula for the gamma-function, we obtain
\begin{equation}\label{eq:surface_aread_normalized_l_p_ball}
\mathcal H^{n-1}(\partial \mathbb D_p^n)
\sim
2e^{1/p}
\sqrt{\frac{\pi(p-1)}{p\,\sin(\pi/p)}}\,n^{1/2},
\qquad n\to\infty.
\end{equation}
It is natural to ask whether a similar asymptotic behavior of the form $\mathrm{const} \cdot n^{1/2}$ holds for the surface area of other sequences of $n$-dimensional convex bodies of volume $1$. For example, for the crosspolytope of unit volume, this is indeed the case. Using~\eqref{eq:V_j_crosspolytope} with $j=n-1$, we conclude that
$$
\mathcal{H}^{n-1}(\partial \mathbb{B}_1^n)=2V_{n-1}(\mathbb{B}_1^n)=\frac{2^n\sqrt{n}}{(n-1)!},
$$
whence
$$
\mathcal{H}^{n-1}(\partial \mathbb{D}_1^n)=\frac{2^n\sqrt{n}}{(n-1)!}\left(\frac{n!}{2^n}\right)^{1-1/n}=\frac{2n^{3/2}}{(n!)^{1/n}}~\sim~2e\sqrt{n}.
$$
Observe that the coefficient $2e$ coincides with the limiting value $\lim_{p \downarrow 1} 2e^{1/p}\sqrt{\frac{\pi(p-1)}{p\sin(\pi/p)}}$ on the right-hand side of~\eqref{eq:surface_aread_normalized_l_p_ball}. However, in general, the asymptotic behavior of the form $\mathrm{const} \cdot n^{1/2}$ does not hold universally. For instance, for the cube $\mathbb D_{\infty}^n=[-1/2,1/2]^n$,
$$
\mathcal{H}^{n-1}(\partial \mathbb{D}_\infty^n) \sim 2n.
$$
The isoperimetric inequality states that for every $n$-dimensional convex body  $K_n$ with $\operatorname{Vol}_n(K_n)=1$ one has  $\mathcal H^{n-1}(\partial K_n) \geq n \kappa_n^{1/n}$. Moreover, as  $n\to\infty$, one has  $n\kappa_n^{1/n} \sim \sqrt{2\pi e\, n}$.
\end{example}

\begin{remark}[Random projections of $\ell_p$-balls]
Several papers, including~\citet{AdamczakPivovarovSimanjuntak2024} and~\citet{ProchnoThaleTuchel2024},  investigate limit theorems for random projections and sections of \(\ell_p\)-balls. Kubota's formula implies that the expected \(j\)-dimensional volume of the orthogonal projection of \(\mathbb B_p^n\) onto a uniformly random \(j\)-dimensional linear subspace of $\mathbb R^n$ equals
\[
\frac{\kappa_j\kappa_{n-j}}{\kappa_n\binom{n}{j}}\,V_j(\mathbb B_p^n).
\]
Hence, Theorems~\ref{theo:V_j_ell_p_ball_exact} and~\ref{theo:exact_asympt_V_j} provide exact and asymptotic expressions for these expectations.
\end{remark}

The bulk asymptotics from Theorem~\ref{theo:exact_asympt_V_j} combined with convexity arguments  yield the following ``exponential profile'' for the intrinsic volumes of the rescaled balls
\[
n^{1/p}\mathbb B_p^n
=
\Bigl\{x\in\mathbb R^n:\ |x_1|^p+\cdots+|x_n|^p\le n\Bigr\}.
\]

\begin{theorem}[Exponential profile of intrinsic volumes]\label{theo:exp_profile_V_j_l_p_balls}
Let $1 < p < \infty$.  
Then, as $n\to\infty$,
$$
\sup_{j\in \{0,\dots, n\}} \left|\frac 1n \log V_{j}(n^{1/p}\mathbb B_p^n) - g_p\left(\frac jn\right)\right|  \to 0
$$
with the exponential profile
$$
g_p(\alpha) :=
\kappa_{p}(\alpha)
+
\sup_{\theta>0}\Psi_{p,\alpha}(\theta),
\qquad
\alpha \in [0,1],
$$
where $\Psi_{p,\alpha}(\theta)$ is the phase function from Proposition~\ref{prop:maximizer_Phi_p_beta},  and
\[
\kappa_{p}(\alpha):=
\frac{\alpha}{p}\Bigl(1-\log\frac{\alpha}{p}\Bigr)
+(1-\alpha)\log(p-1)
-\alpha\log\alpha-(1-\alpha)\log(1-\alpha)
-\frac{1-\alpha}{2}\log\pi.
\]
Here and in the following, we adopt  the convention \(0\log 0=0\).
\end{theorem}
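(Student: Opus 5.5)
We plan to derive the profile from the exact formula of Theorem~\ref{theo:V_j_ell_p_ball_exact}, using homogeneity $V_j(n^{1/p}\mathbb B_p^n)=n^{j/p}V_j(\mathbb B_p^n)$.

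\textbf{Step 1: log-asymptotics of the prefactor.} Taking logarithms in \eqref{eq:V_j_ell_p_ball_exact}, the factors $p$, $\kappa_{n-j}$ and $\Gamma(\frac{n-j}{2})$ combine to $2\pi^{(n-j)/2}$, up to polynomial factors. Stirling's formula gives, uniformly in $j=\alpha n$,
\[
\tfrac1n\log\Bigl(\tbinom nj (p-1)^{n-j-1}\pi^{-(n-j)/2} n^{j/p}/\Gamma(1+\tfrac jp)\Bigr)=\kappa_p(j/n)+o(1).
\]
Indeed, $\frac jp\log n-\log\Gamma(1+\frac jp)=\frac{\alpha n}{p}(1-\log\frac\alpha p)+O(\log n)$, and the binomial coefficient contributes the entropy terms. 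The error is $O(\log n/n)$ uniformly in $j$, including the endpoints, since $x\log x$ is continuous on $[0,1]$.

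\textbf{Step 2: the integral.} Write the integrand as $\theta^{-1}\exp(n\Psi_{p,\alpha}(\theta))\,\mathcal J_p(\theta)^{-1}\mathcal K_p(\theta)$, with $\alpha=j/n$. The upper bound is
\[
\int\le e^{n\sup\Psi_{p,\alpha}}\int_0^\infty\theta^{-1}\mathcal K_p/\mathcal J_p\,d\theta .
\]
This needs care: near $\theta=0$ the factor $\theta^{-1}$ is not integrable. So instead we retain a little of the power, $\theta^{\frac{n-j}{2}-1}$, and apply H\"older-type splitting. For the lower bound we restrict to a neighbourhood of the maximizer and use continuity of $\Psi$. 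Large-deviation arguments then give $\frac1n\log\int=\sup_\theta\Psi_{p,\alpha}(\theta)+o(1)$ for each fixed interior $\alpha$. Pointwise in the bulk this is in fact already contained in Theorem~\ref{theo:exact_asympt_V_j}(i), whose Laplace asymptotics yield exactly $g_p(\alpha)$ after taking $\frac1n\log$.

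\textbf{Step 3: uniformity via convexity.} The key structural input is that $j\mapsto \log V_j(K)$ is concave, by the Alexandrov--Fenchel (ultra-log-concavity) inequalities $V_j^2\ge c_j V_{j-1}V_{j+1}$. Hence $f_n(\alpha):=\frac1n\log V_{\lfloor\alpha n\rfloor}(n^{1/p}\mathbb B_p^n)$ is, up to an $O(1/n)$ correction, concave in $\alpha$. The plan is then:
\begin{itemize}
\item pointwise convergence $f_n\to g_p$ holds on $(0,1)$ by Step 2 / Theorem~\ref{theo:exact_asympt_V_j}(i);
\item at $\alpha=0$ it holds since $V_0=1$;
\item at $\alpha=1$ it holds by the explicit volume formula \eqref{eq:volume_l_p_ball_exact} and Stirling;
\item $g_p$ is concave (as a limit of concave functions) and continuous on $[0,1]$, which requires checking that $\sup_\theta\Psi_{p,\alpha}$ tends to the correct endpoint values as $\alpha\to0,1$.
\end{itemize}
A standard lemma states that pointwise convergence of concave functions to a continuous concave limit on a compact interval, together with convergence at the endpoints, is uniform. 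For completeness, $g_p(0)=0$: $\Psi_{p,0}=\frac12\log\theta+\log\mathcal J_p$ has supremum $\frac12\log\pi$ at $\theta\to\infty$, matching $\kappa_p(0)=\log(p-1)-\frac12\log\pi$ after accounting for the $(p-1)$ factor. This endpoint bookkeeping needs verification.

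\textbf{Main obstacle.} The hardest part is uniform control near the edges $\alpha\to0$ and $\alpha\to1$, where the maximizer $\theta_{p,\alpha}$ escapes to $\infty$ or $0$. Concavity handles the interior, but upper semicontinuity at the endpoints requires showing that $\limsup f_n(\alpha_n)\le g_p(0)$ when $\alpha_n\to0$. Concavity alone does not give this upper bound, so I would first try a crude direct bound from Step 2 that is uniform in $j$. Concretely, I would bound $\mathcal K_p\le C\,\mathcal J_p\,\theta^{-1}$ for large $\theta$ and the integral by $e^{n\sup\Psi}\cdot\mathrm{poly}(n)$, and then verify continuity of $\alpha\mapsto\sup_\theta\Psi_{p,\alpha}$ at the endpoints using the asymptotics of $\mathcal I_p,\mathcal J_p$ as $\theta\to0,\infty$.
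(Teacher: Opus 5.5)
Your plan is the paper's proof: concave piecewise-linear interpolants from log-concavity of the intrinsic volumes (Remark~\ref{rem:logconcavity_intrinsic_vols}), pointwise limits from Theorem~\ref{theo:exact_asympt_V_j}(i) in the bulk and from $V_0=1$ and the volume formula at $\alpha=0,1$, and then a uniformity lemma for concave functions.

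\textbf{Endpoint continuity.} You are right that this lemma needs the limit to be continuous at the endpoints. The paper's Lemma~\ref{lem:pointwise_implies_uniform_concave} claims this follows from concavity alone, which is false (take $f_n(x)=\min(0,nx-1)$ on $[0,1]$). Here, however, continuity is immediate. For each fixed $\theta$, $\Psi_{p,\alpha}(\theta)$ is affine in $\alpha$ and bounded by $\alpha\log\mathcal I_p(0)+(1-\alpha)\log\frac{\sqrt\pi}{p-1}$, because $\mathcal I_p$ is decreasing and $\theta^{1/2}\mathcal J_p(\theta)<\frac{\sqrt\pi}{p-1}$. Hence $\alpha\mapsto\sup_{\theta>0}\Psi_{p,\alpha}(\theta)$ is finite, convex and lower semicontinuous on $[0,1]$, and therefore continuous; $\kappa_p$ is continuous as well. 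At $\alpha=0$ one gets $\sup_\theta\Psi_{p,0}=\frac12\log\pi-\log(p-1)$, so $g_p(0)=0$, which settles your endpoint bookkeeping.

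\textbf{The ``main obstacle''.} Once continuity is in hand, this is not an obstacle. For concave $f_n$ and $0\le x<\delta$, decreasing slopes give $f_n(x)\le f_n(\delta)+|f_n(2\delta)-f_n(\delta)|$. Therefore $\limsup_n\sup_{[0,\delta]}f_n\le g_p(\delta)+|g_p(2\delta)-g_p(\delta)|$, which tends to $g_p(0)$ as $\delta\downarrow0$, and the same argument works at $\alpha=1$. This is exactly what your ``standard lemma'' already encodes, so the uniform-in-$j$ bound from Step~2 is unnecessary and can be dropped.

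\textbf{Minor point.} Work with the linear interpolant at the grid points $j/n$, as the paper does, rather than with $\frac1n\log V_{\lfloor\alpha n\rfloor}$. The latter differs from a concave function by $O(\log n/n)$, not $O(1/n)$, since $V_1(n^{1/p}\mathbb B_p^n)\asymp n$.
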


\begin{example}[\(\alpha=1\)]\label{example:exp_profile_ell_p_at_alpha_1}
In the case \(\alpha=1\), the quantity \(g_p(1)\) describes the exponential growth rate of the volume of the rescaled \(\ell_p\)-balls. Combining the exact volume formula~\eqref{eq:volume_l_p_ball_exact} with Stirling's formula, we obtain
\[
\Vol_n\bigl(n^{1/p}\mathbb B_p^n\bigr)
=
n^{n/p}\,\frac{\bigl(2\,\Gamma(1+\tfrac1p)\bigr)^n}{\Gamma(1+\tfrac np)}
\sim
\sqrt{\frac{p}{2\pi n}}\,
\Bigl(2\,(ep)^{1/p}\Gamma(1+\tfrac1p)\Bigr)^n,
\qquad n\to\infty.
\]
Consequently,
\[
g_p(1)
=
\lim_{n\to\infty}\frac{1}{n}\log \Vol_n\bigl(n^{1/p}\mathbb B_p^n\bigr)
=
\log\!\left(2\,(ep)^{1/p}\Gamma\!\left(1+\tfrac1p\right)\right).
\]
\end{example}

\subsection{Conjectures: Limit theorems for intrinsic volumes}

The exponential profile established above for \(\ell_p\)-balls suggests that a similar phenomenon should hold for much broader classes of high-dimensional convex bodies.

\begin{conjecture}[Exponential profile of intrinsic volumes]\label{conj:exp_profile_intr_vols}
Let \((K_n)_{n\ge1}\) be a sufficiently regular sequence of convex bodies such that $K_n\subseteq \mathbb R^n$ and
$\dim K_n=n$ for every \(n\ge1\), and assume that $\frac{1}{n}\log \Vol_n(K_n)\to c\in\mathbb R$, as $n\to\infty$.
Then there exists a concave function $g:[0,1]\to\mathbb R$, the exponential profile of intrinsic volumes, such that
\[
g(\alpha)
=
\lim_{n\to\infty}\frac{1}{n}\log V_{\lfloor \alpha n\rfloor}(K_n),
\qquad \alpha\in[0,1].
\]
\end{conjecture}

\begin{remark}[On concavity]\label{rem:logconcavity_intrinsic_vols}
The intrinsic volumes of any convex body $K$ satisfy $V_r(K)^2\geq \frac{r+1}{r}\, V_{r-1}(K) V_{r+1}(K)$ for all $r=1,\dots, \dim K - 1$; see~\cite{McMullen1991}. It follows that the exponential profile $g$, whenever it exists, is concave.
\end{remark}

Theorem~\ref{theo:exp_profile_V_j_l_p_balls} confirms Conjecture~\ref{conj:exp_profile_intr_vols} for $K_n=n^{1/p}\mathbb B_p^n$, $1<p<\infty$.
In the next two examples, we discuss the cases \(p=1,2,\infty\) and regular simplices, for which the conjecture is likewise valid.

\begin{example}[\(\ell_p\)-balls for \(p=1,2,\infty\)]
For cubes \(\mathbb B_\infty^n=[-1,1]^n\) and rescaled Euclidean balls \(\sqrt n\,\mathbb B_2^n\), the explicit formulas~\eqref{eq:V_j_cubes} and~\eqref{eq:V_j_balls}, combined with Stirling's formula, yield
\begin{align*}
g_{\infty}(\alpha)
&:=
\lim_{n\to\infty}\frac1n\log V_{j(n)}(\mathbb B_\infty^n) =
-\alpha\log\alpha-(1-\alpha)\log(1-\alpha)+\alpha\log 2,
\\
g_{2}(\alpha)
&:=
\lim_{n\to\infty}\frac1n\log V_{j(n)}(\sqrt n\,\mathbb B_2^n) =
-\alpha\log\alpha-\frac{1-\alpha}{2}\log(1-\alpha)
+\alpha\log \bigl(\sqrt{2\pi e}\bigr),
\end{align*}
for every \(\alpha\in[0,1]\) and every sequence \(j(n)\in\{0,\dots,n\}\) such that \(j(n)/n\to\alpha\) as \(n\to\infty\). These formulas are consistent with Theorem~\ref{theo:exp_profile_V_j_l_p_balls}. For $p=2$ this
follows from~\eqref{eq:Phi_alpha_def_prop_maximizer_theo_asympt_V_j_p=2}. For $p\to \infty$, we have using the limit relations from Example~\ref{ex:cubes_explicit}
\begin{align*}
\lim_{p\to\infty} g_p(\alpha)&=-\alpha\log\alpha-(1-\alpha)\log (1-\alpha)-\frac{1-\alpha}{2}\log \pi\\
&\hspace{3cm}+\lim_{p\to\infty} \sup_{\theta>0}\left(\alpha\log \mathcal{I}_p(\theta) +(1-\alpha)\log(\theta^{1/2}(p-1)\mathcal{\mathcal J}_p(\theta))\right)\\
&=-\alpha\log\alpha-(1-\alpha)\log (1-\alpha)-\frac{1-\alpha}{2}\log \pi+\alpha\log 2+\sup_{\theta>0}\left((1-\alpha)\log\widetilde{\mathcal J}_{\infty}(\theta)\right)\\
&=-\alpha\log\alpha-(1-\alpha)\log (1-\alpha)-\frac{1-\alpha}{2}\log \pi+\alpha\log 2+\left((1-\alpha)\log\widetilde{\mathcal J}_{\infty}(\infty)\right)\\
&=-\alpha\log\alpha-(1-\alpha)\log(1-\alpha)+\alpha\log 2 = g_\infty(\alpha),\qquad \alpha\in[0,1],
\end{align*}
where we have utilized the equality $\widetilde{\mathcal J}_{\infty}(\infty)=\sqrt{\pi}$.

For the crosspolytope, corresponding to \(p=1\), the explicit formula~\eqref{eq:V_j_crosspolytope}, combined with a standard Laplace asymptotic analysis, gives
\begin{align*}
g_{1}(\alpha)
&:=
\lim_{n\to\infty}\frac1n\log V_{j(n)}(n\mathbb B_1^n) \\
&=
\alpha\log(2e)-2\alpha\log\alpha-(1-\alpha)\log(1-\alpha)
+\sup_{t\ge 0}
\left(
-\frac{\alpha}{2}t^2+(1-\alpha)\log(2\Phi(t)-1)
\right).
\end{align*}
\end{example}

\begin{example}[Regular simplex]
Let $\Delta_n:=\operatorname{conv}\{e_1,\dots,e_{n+1}\}\subseteq\mathbb R^{n+1}$
be the regular \(n\)-dimensional simplex. Its intrinsic volumes are given by
\[
V_j(\Delta_n)
=
\binom{n+1}{j+1}\frac{j+1}{j!}
\int_{-\infty}^{\infty}
\varphi(\sqrt{j+1}\,x)\,\Phi(x)^{\,n-j}\,dx,
\qquad j=0,\dots,n;
\]
see~\cite{hadwiger,ruben} and also~\cite[Section~15.2.3]{HenkRichterGebertZiegler2017}.
If \(j(n)/n\to\alpha\in[0,1]\) as \(n\to\infty\), then a combination of Laplace's method with Stirling's formula yields the exponential profile
\begin{align*}
g_{\mathrm{simp}}(\alpha)
&:=
\lim_{n\to\infty}\frac1n\log V_{j(n)}(n\Delta_n)
\\
&=
\alpha
-2\alpha\log\alpha-(1-\alpha)\log(1-\alpha)
+\sup_{x\in\mathbb R}
\left(
-\frac{\alpha}{2}x^2+(1-\alpha)\log\Phi(x)
\right).
\end{align*}
\end{example}

Following~\citet{McCoyTropp2014} and~\citet{LotzMcCoyNourdinPeccatiTropp2020}, one may associate with an \(n\)-dimensional convex body \(K_n\) the \emph{intrinsic volume random variable} \(Z_n\), taking values in \(\{0,\dots,n\}\) and defined by
$$
\mathbb P[Z_n=j]
=
\frac{V_j(K_n)}{\sum_{r=0}^n V_r(K_n)}, \qquad   j=0,\dots,n.
$$

\begin{conjecture}[Central limit theorem for intrinsic volumes]
Let \((K_n)_{n\ge1}\) be a sufficiently regular sequence of convex bodies with \(\dim K_n=n\), and let \((Z_n)_{n\ge1}\) be the associated intrinsic volume random variables. Then there exists a sequence of positive numbers \((\sigma_n^2)_{n\ge1}\) such that $(Z_n-\mathbb EZ_n)/\sigma_n$
converges in distribution to the standard normal law.
\end{conjecture}

What is known in general is that \(Z_n\) exhibits Gaussian concentration around its mean; see~\cite{LotzMcCoyNourdinPeccatiTropp2020} and, for the conic analogue,~\cite{McCoyTropp2014}. Furthermore, Theorem~\ref{theo:exact_asympt_V_j}~(i), combined with standard though lengthy computations, can be used to verify the conjecture for $K_n=r\,\mathbb B_p^n$, $r>0$. We do not pursue these details here.

\section{Main results on curvature measures}\label{sec:main_results_curvature_measures}
\subsection{Curvature measures}
Curvature measures, introduced by~\citet{Federer1959},  are finite measures on the boundary of a convex body that localize the notion of intrinsic volumes. The following facts can be found in the book of~\citet[Chapter~4]{schneider_book_convex_cones_probab_geom}; see also~\cite{Schneider1993} and~\cite[Section~14.2]{schneider_weil_book}.
Let \(K\subseteq \mathbb R^n\) be a convex body.  For a Borel set \(B\subseteq \partial K\) and \(\rho\ge 0\), define the local outer parallel set over \(B\) by
\[
A_\rho(K,B)
:=
\bigl\{y\in (K+\rho \mathbb B_2^n)\setminus K:\ p_K(y)\in B\bigr\},
\]
where \(p_K(y)\) is the metric projection of \(y\in \mathbb R^n\) onto \(K\).

The curvature measures $\Phi_0(K,\cdot),\dots,\Phi_{n-1}(K,\cdot)$ are the unique finite Borel measures on \(\partial K\) characterized by the local Steiner formula
\[
\Vol_n \bigl(A_\rho(K,B)\bigr)
=
\sum_{j=0}^{n-1}\kappa_{\,n-j}\,\rho^{\,n-j}\,\Phi_j(K,B),
\qquad \rho\ge 0,
\]
for every Borel set \(B\subseteq \partial K\); see Equations~(4.12), (4.19) in~\cite{SchneiderBook} or Equation~(14.12) in~\cite{schneider_weil_book}. In particular, taking \(B=\partial K\), one recovers the Steiner formula for \(K+\rho \mathbb B_2^n\), and hence $\Phi_j(K,\partial K)=V_j(K)$, $j=0,\dots,n-1$, where \(V_j(K)\) denotes the \(j\)-th intrinsic volume.

One important special case is  $\Phi_{n-1}(K, \cdot)= \frac 12 \mathcal H^{n-1}(\cdot) \llcorner \partial K$; see Equations~(4.31), (4.19) in~\cite{SchneiderBook}.  Here $\mathcal H^{n-1}\llcorner \partial K$ denotes the restriction of the $(n-1)$-dimensional Hausdorff measure $\mathcal H^{n-1}$ to $\partial K$, i.e., the surface measure on $\partial K$.

Assume now that \(B\subseteq \partial K\) is Borel and $\partial K$ is of class \(C^2\) in a neighborhood of $B$.
Let $\lambda_1(x),\dots,\lambda_{n-1}(x)$ be the principal curvatures at \(x\in \partial K\).
Then,  for \(m=1,\dots,n\),
\begin{equation}\label{eq:density_curvature_measure}
\Phi_{n-m}(K,B)
=
\frac{1}{m\,\kappa_m}
\int_B
\sigma_{m-1}\bigl(\lambda_1(x),\dots,\lambda_{n-1}(x)\bigr)\,
d\mathcal H^{n-1}(x),
\end{equation}
where $\sigma_\ell$ is the $\ell$-th elementary symmetric polynomial defined in~\eqref{eq:sigma_eleme_symm_poly}; see Equations~(4.25), (4.19), (2.36) in~\cite{SchneiderBook} or~\cite[p.~607]{schneider_weil_book}.

\medskip
In the following we shall obtain  several exact results on the curvature measures of the weighted $\ell_p$-ball $\mathbb B_p^n(a)$. Since these results are too technical to be stated here, we just briefly mention them:
\begin{itemize}
\item Proposition~\ref{prop:curvature_measures_density} provides an explicit formula for the density of $\Phi_{n-m}(\mathbb B_p^n(a), \cdot)$ with respect to the surface measure $\mathcal H^{n-1}$ on $\partial \mathbb B_p^n(a)$.
\item Proposition~\ref{prop:mixed_moments_curvature_measure} gives a closed formula for the integral of $|x_1|^{\lambda_1} \ldots |x_n|^{\lambda_n}$ w.r.t.\ the measure $\Phi_{n-m}(\mathbb B_p^n(a), \cdot)$. In particular, taking $\lambda_1=\ldots = \lambda_n = 0$ we shall obtain a formula for the intrinsic volumes $V_{n-m}(\mathbb B_p^n(a))$, proving Theorems~\ref{theo:V_j_ell_p_ellipsoid_exact} and~\ref{theo:V_j_ell_p_ball_exact}.
\end{itemize}

\subsection{A continuum of Maxwell--Poincar\'{e}--Borel laws  for curvature measures}
A generalization of the classical  Maxwell--Poincar\'{e}--Borel theorem to  $\ell_p$-balls due to~\citet{Mogulskii1991English} states that if $X_n=(X_{1;n},\ldots, X_{n;n})$ is a random point on $\partial \mathbb B_p^n$ distributed according to the normalized surface measure $(\mathcal H^{n-1} \llcorner \partial \mathbb B_p^n) /\mathcal H^{n-1}(\partial \mathbb B_p^n)$, with $1\leq p < \infty$, then for every $r\in \N$,
\begin{equation}\label{eq:maxwell_for_ell_p_weak_conv}
n^{1/p} (X_{1;n},\ldots, X_{r;n}) \toweak p^{1/p}(\xi_{1},\ldots, \xi_{r}),
\end{equation}
where $\overset{w}{\longrightarrow}$ denotes weak convergence and  the components $\xi_1,\ldots, \xi_r$ are independent identically distributed (i.i.d.)\ each having the $p$-Gaussian probability density
\begin{equation}\label{eq:p_gauss_density}
f_{p}(u)
=
\frac{1}{2\Gamma(1+\frac1p)}\,e^{-|u|^p},
\qquad u\in\mathbb R.
\end{equation}
The classical Maxwell--Poincar\'{e}--Borel theorem (see, e.g., \cite{DiaconisFreedman1987}) corresponds to the case $p=2$. Then $\sqrt 2\, \xi_1,\ldots, \sqrt 2 \,\xi_r$ are independent and  standard Gaussian.
Around the same time, \citet{Borovkov1991} and~\citet{RachevRuschendorf1991} proved~\eqref{eq:maxwell_for_ell_p_weak_conv}, \eqref{eq:p_gauss_density} if $X_n$ is distributed according to the so-called cone measure on the boundary of $\mathbb B_p^n$, or uniformly distributed on $\mathbb B_p^n$. These two measures admit a simple probabilistic representation in terms of i.i.d.\ $p$-Gaussian random variables~\cite{BartheGuedonMendelsonNaor2005,Berman1980,Borovkov1991,NaorRomik2003,SchechtmanZinn1990}, whereas for the normalized surface measure on $\partial \mathbb B_p^n$ (which is different from the cone measure unless $p=1,2$) such representation is not known. \citet{NaorRomik2003} and~\citet{Naor2007} showed that the normalized surface measure and the cone measure are ``close'' in the sense of total variation distance, which gives another proof of the Maxwell--Poincar\'{e}--Borel theorem for the normalized surface measure. Several papers including~\cite{GantertKimRamanan2017,KimRamanan2018,LiaoRamanan2024} studied large deviations for \emph{random} projections of the uniform distribution on $\mathbb B_p^n$.  Results of Maxwell--Poincar\'{e}--Borel  type are also available for the uniform distribution on Orlicz balls, which generalize $\ell_p$-balls; see~\cite{KabluchkoProchno2021,JohnstonProchno2023,FruehwirthProchno2024,BartheWolff2023}.

The next theorem proves  Maxwell--Poincar\'{e}--Borel type laws for normalized curvature measures $\Phi_{j(n)}(\mathbb B_p^n, \cdot)/ V_{j(n)}(\mathbb B_p^n)$. Since these  have an additional index $j=j(n)$, which we allow to depend on $n$, we in fact obtain a ``continuum'' of different laws indexed by $\alpha = \lim_{n\to \infty} j(n)/n \in [0,1]$.  

\begin{theorem}[Maxwell--Poincar\'{e}--Borel laws for curvature measures]\label{theo:maxwell_curvature}
Fix $1<p <\infty$. For $n\geq 2$ let $j(n)\in \{0,\ldots, n-1\}$ and let $X_n=(X_{1;n},\ldots, X_{n;n})$ be a random point on $\partial \mathbb B_p^n$ distributed according to the normalized curvature measure $\Phi_{j(n)}(\mathbb B_p^n, \cdot)/ V_{j(n)}(\mathbb B_p^n)$, i.e.,
$$
\P[X_n \in A] = \frac{\Phi_{j(n)}(\mathbb B_p^n, A)}{V_{j(n)}(\mathbb B_p^n)},
\qquad
\text{ for all Borel } A\subseteq \partial\mathbb B_p^n.
$$
\begin{itemize}
\item[(i)] Bulk regime:
Suppose that $j(n) /n \to\alpha \in (0,1)$ as $n\to\infty$.  Then, for every $r\in \N$,
$$
n^{1/p} (X_{1;n}, \ldots, X_{r;n}) \toweak (p/\alpha)^{1/p} (\xi_1^{(\alpha)},\ldots, \xi_r^{(\alpha)}),
$$
where the random variables  $\xi_1^{(\alpha)},\ldots, \xi_r^{(\alpha)}$ are independent and each $\xi_i^{(\alpha)}$ has the density
\begin{equation}\label{eq:f_p_alpha_density_in_maxwell_laws}
f_{p,\alpha}(u) = \alpha \frac{e^{-|u|^p-\theta_{p,\alpha} |u|^{2p-2}}}{\mathcal I_p(\theta_{p,\alpha})}
+
(1-\alpha)\frac{|u|^{p-2}e^{-|u|^p-\theta_{p,\alpha} |u|^{2p-2}}}{\mathcal J_p(\theta_{p,\alpha})}, \qquad u\in \R.
\end{equation}
Here $\theta_{p,\alpha}$ is the same as in Theorem~\ref{theo:exact_asympt_V_j} (i), i.e., the unique solution in $(0,\infty)$ to
\begin{equation}\label{eq:theta_alpha_equation_theo_maxwell_curvature}
\theta_{p,\alpha}\,\frac{\mathcal J_p(\theta_{p,\alpha})}{\mathcal I_p(\theta_{p,\alpha})}
=
\frac{(1-\alpha) p}{2(p-1)\alpha}.
\end{equation}
\item[(ii)] Left-edge regime: Let $j(n) = j \in \N\cup\{0\}$ be constant.  Then, for every $r\in \N$,
$$
n^{1/p} (X_{1;n}, \ldots, X_{r;n}) \toweak p^{1/p} (\eta_1,\ldots, \eta_r),
$$
where the random variables $\eta_1,\ldots, \eta_r$ are independent and each $\eta_i$ has the density
\begin{equation}\label{eq:g_p_u_density_maxwell}
g_p(u)
=
\frac{(p-1)\sqrt{\lambda_0}}{\sqrt{\pi}}\,
|u|^{p-2}e^{-\lambda_0 |u|^{2p-2}},
\qquad u\in\mathbb R,
\qquad
\lambda_0 := \left(
\frac{p\,\Gamma\!\left(\frac{2p-1}{2p-2}\right)}{\sqrt{\pi}}
\right)^{\frac{2(p-1)}{p}}.
\end{equation}
\item[(iii)] Right-edge regime: Let $j(n) = n-m$, where $m\in \N$ is fixed. Then, for every $r\in \N$,
$$
n^{1/p} (X_{1;n}, \ldots, X_{r;n}) \toweak p^{1/p} (\xi_1,\ldots, \xi_r),
$$
where the random variables $\xi_1,\ldots, \xi_r$ are independent and have the same $p$-Gaussian density
\[
f_{p}(u)
=
\frac{1}{2\Gamma(1+\frac1p)}\,e^{-|u|^p},
\qquad u\in\mathbb R.
\]
\end{itemize}
\end{theorem}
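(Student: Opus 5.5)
We plan to prove the theorem through the joint Laplace (or Mellin) transform of the first $r$ coordinates. The starting point is the explicit mixed-moment formula of Proposition~\ref{prop:mixed_moments_curvature_measure}, specialized to $a_1=\dots=a_n=1$. It expresses $\int |x_1|^{\lambda_1}\cdots|x_n|^{\lambda_n}\,\Phi_{n-m}(\mathbb B_p^n,dx)$ as a one-dimensional $\theta$-integral, of the same shape as~\eqref{eq:V_j_ell_p_ball_exact}. In that integral each coordinate $r$ contributes $\mathcal F_p(\theta;\lambda_r)$, $\mathcal F_p(\theta;\lambda_r+p-2)$ or $\mathcal F_p(\theta;\lambda_r+2p-2)$, according to whether it lies in the "$\mathcal I$", "$\mathcal J$" or "$\mathcal K$" group, summed over the choices of these groups. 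There is also a global Gamma factor $1/\Gamma(1+(j+\sum\lambda_r)/p)$ replacing $1/\Gamma(1+j/p)$, reflecting the homogeneity of degree $j+\sum\lambda_r$.

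We take $\lambda_1,\dots,\lambda_r\ge 0$ (or in a small complex strip) and $\lambda_{r+1}=\dots=\lambda_n=0$, and divide by $V_j(\mathbb B_p^n)$. This gives the mixed moments $\E\prod_{i\le r}|n^{1/p}X_{i;n}|^{\lambda_i}$ as a ratio of two such $\theta$-integrals. By sign symmetry of the curvature measure, the sign vector is uniform and independent of the absolute values. Hence it suffices to show that these Mellin transforms converge to $\prod_i \E|c\,\zeta_i|^{\lambda_i}$ for $\lambda$ in an open set. Convergence of Mellin transforms on a strip then yields weak convergence of $(\log|n^{1/p}X_{i;n}|)_{i\le r}$, and hence of the coordinates themselves.

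In the bulk regime (i) we run the same Laplace method as in the proof of Theorem~\ref{theo:exact_asympt_V_j}(i). The $n-r$ untouched coordinates produce the factor $e^{n\Psi_{p,\alpha(n)}(\theta)}$ up to $O(1)$ corrections. The $r$ marked coordinates only contribute $\theta$-dependent $O(1)$ factors. Each marked coordinate falls into the $\mathcal I$-group with asymptotic probability $\alpha$ and into the $\mathcal J$-group with probability $1-\alpha$. Its probability of being the unique $\mathcal K$ coordinate is $O(1/n)$, so that case is negligible. The concentration $\theta\to\theta_{p,\alpha}$ therefore makes the ratio converge to
\[
\prod_{i=1}^r\Bigl(\alpha\,\tfrac{\mathcal F_p(\theta_{p,\alpha};\lambda_i)}{\mathcal I_p(\theta_{p,\alpha})}+(1-\alpha)\tfrac{\mathcal F_p(\theta_{p,\alpha};\lambda_i+p-2)}{\mathcal J_p(\theta_{p,\alpha})}\Bigr)\cdot\lim \frac{n^{\sum\lambda_i/p}\,\Gamma(1+j/p)}{\Gamma(1+(j+\sum\lambda_i)/p)}.
\]
The last limit equals $(p/\alpha)^{\sum\lambda_i/p}$ by Stirling, and the product is exactly $\prod_i\E|\xi_i^{(\alpha)}|^{\lambda_i}$ for the density~\eqref{eq:f_p_alpha_density_in_maxwell_laws}.

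The edge regimes use the same ratio, but with the asymptotic analysis of Theorem~\ref{theo:exact_asympt_V_j}(ii),(iii). In (iii), $m$ is fixed and the dominant $\theta$-range is $\theta\asymp 1/n$, i.e.\ $\theta\to0$. Almost all coordinates, including the marked ones with probability $\to1$, are in the $\mathcal I$-group. The factors $\mathcal F_p(\theta;\lambda_i)/\mathcal I_p(\theta)\to \Gamma((1+\lambda_i)/p)/\Gamma(1/p)$ give the $p$-Gaussian moments, and the Gamma ratio gives $p^{\sum\lambda_i/p}$. In (ii), $j$ is fixed and the marked coordinates lie in the $\mathcal J$-group with probability $\to1$. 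The dominant $\theta$ tends to infinity, with $\theta\asymp n^{2(p-1)/p}$; this is the scaling responsible for the factor $n^{j(1-1/p)}$. For large $\theta$, the substitution $u=\theta^{-1/(2p-2)}v$ shows that $e^{-|u|^p}$ becomes negligible relative to $e^{-\theta|u|^{2p-2}}$. The normalized factors $\mathcal F_p(\theta;\lambda+p-2)/\mathcal J_p(\theta)$ are then moments of the law $\propto|u|^{p-2}e^{-\theta|u|^{2p-2}}$. After the rescaling by $n^{1/p}$ and the Gamma ratio $n^{\sum\lambda_i/p}\Gamma(1+j/p)/\Gamma(1+(j+\sum\lambda_i)/p)$, this converges to the moments of $p^{1/p}\eta$. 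The constant $\lambda_0$ is identified from the concentration point of the rescaled variable $\theta n^{-2(p-1)/p}$.

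The main obstacle is regime (ii). There the Gamma factor $\Gamma(1+(j+\sum\lambda)/p)$ does not become asymptotically a power of $n$ alone, so the $n^{1/p}$ normalization must be matched with the location of the $\theta$-maximum. One must carefully locate this (no longer interior-bounded) maximizer of the integrand, obtain uniform Laplace bounds when $\theta\to\infty$, and verify that the resulting constant is precisely $\lambda_0$. A secondary technical point, present in all three regimes, is uniform control of the tails of the $\theta$-integral for $\lambda$ in a neighbourhood of $0$, which the Mellin-transform argument requires.
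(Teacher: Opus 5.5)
Your treatment of regimes (i) and (iii) is essentially the paper's. It uses the mixed-moment formula of Proposition~\ref{prop:mixed_moments_curvature_measure} with $a_k=1$, and splits according to whether a marked coordinate is the $\mathcal K$-index or lies in the $\mathcal I$- or $\mathcal J$-group. It then applies interior Laplace at $\theta_{p,\alpha}$ (resp.\ endpoint Laplace at $\theta=0$ on scale $1/n$), Gamma-ratio asymptotics, and sign symmetry plus Mellin transforms.

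Regime (ii), however, rests on a wrong mechanism, exactly at the point you flagged as the main obstacle: there is no concentration point of $\theta n^{-2(p-1)/p}$ and no maximizer around which to run Laplace's method. Set $N_n=n-r-j-1$ and $u=N_n\Gamma(\frac{2p-1}{2p-2})\pi^{-1/2}\theta^{-p/(2p-2)}$. The large-$\theta$ expansions of $\mathcal K_p,\mathcal I_p,\mathcal J_p$ and $\mathcal F_p(\cdot;\lambda_s+p-2)$ then turn the dominant part of $\theta^{(n-j)/2-1}S_n(\theta)\,d\theta$ into an explicit $n$-dependent constant times $u^{(\Lambda+j+p)/p-1}G_n(u)\,du$, with $G_n(u)\to e^{-u}$ and $|G_n(u)|\le Ce^{-u/2}$. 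Since $n$ does not enter the exponent, the mass is spread over all $u\in(0,\infty)$. You must therefore integrate fully (dominated convergence, using $\Re(\Lambda+j+p)>0$), which yields $\Gamma(\frac{\Lambda+j+p}{p})$. This cancels \emph{exactly} the prefactor $1/\Gamma(\frac{j+\Lambda+p}{p})$ in the exact representation of $I_{n,j}$, and that cancellation is what disposes of the Gamma factor you were worried about.

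Matching $n^{1/p}$ against a maximizer cannot achieve this. Freeze $\theta$ at a point $\theta^*$ with $u(\theta^*)=u^*$ fixed, and use $\mathcal F_p(\theta;\lambda+p-2)/\mathcal J_p(\theta)\sim\pi^{-1/2}\Gamma(\frac{\lambda+p-1}{2p-2})\theta^{-\lambda/(2p-2)}$. Your ratio then converges to the correct limit multiplied by the spurious factor $(u^*)^{\Lambda/p}\Gamma(1+\frac jp)/\Gamma(1+\frac{j+\Lambda}{p})$. This factor is not identically $1$ and, for $r\ge 2$, is not of product form in $\lambda_1,\dots,\lambda_r$, so it would contradict the asymptotic independence you want to prove.

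In your ratio language, the correct statement is this: under the normalized weight, $u$ is asymptotically $\mathrm{Gamma}(\frac{j+p}{p})$-distributed, and $\E[u^{\Lambda/p}]=\Gamma(\frac{j+p+\Lambda}{p})/\Gamma(\frac{j+p}{p})$ is what cancels the Gamma ratio. The random scale of $\theta$ and the radial normalization cancel each other. The limit Mellin transform is then $(\sqrt\pi/\Gamma(\frac{2p-1}{2p-2}))^{\Lambda/p}\prod_{s}\pi^{-1/2}\Gamma(\frac{\lambda_s+p-1}{2p-2})$. The constant $\lambda_0$ is read off by matching $(\sqrt\pi/\Gamma(\frac{2p-1}{2p-2}))^{\Lambda/p}$ with $p^{\Lambda/p}\lambda_0^{-\Lambda/(2p-2)}$, not from a concentration point. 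In regime (iii) the analogous spread of $\theta$ on scale $1/n$ is harmless, because $\mathcal F_p(\theta;\lambda)/\mathcal I_p(\theta)$ has a finite nonzero limit as $\theta\downarrow 0$. In (ii) these ratios scale like $\theta^{-\lambda/(2p-2)}$, so the spread matters at leading order.

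You also still need the two negligibility estimates used in the paper:
\begin{itemize}
\item The range $\theta\le T$ is exponentially small, because $\mathcal J_p(\theta)<\frac{\sqrt\pi}{p-1}\theta^{-1/2}$ strictly, giving a factor $\rho_T^{N_n}$ with $\rho_T<1$.
\item The $O(n^j)$ non-generic index configurations are negligible against the $\sim n^{j+1}/j!$ generic ones.
\end{itemize}
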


\begin{example}[Classical Maxwell--Poincar\'{e}--Borel]
Let $p=2$. All normalized curvature measures $\Phi_j(\mathbb B_2^n, \cdot)/V_j(\mathbb B_2^n)$, $j=0,\ldots, n-1$,  of $\mathbb B_2^n$, being rotationally invariant,  coincide with the uniform distribution on the unit sphere $\mathbb S^{n-1}$. On the other hand,  using~\eqref{eq:Phi_alpha_def_prop_maximizer_theo_asympt_V_j_p=2} one obtains $\theta_{2,\alpha}=\frac{1-\alpha}{\alpha}$, for  all $\alpha \in (0,1)$. Hence, $f_{2,\alpha}(u)
=
(\pi\alpha)^{-1/2}\,e^{-u^2/\alpha}$,  $u\in\mathbb R$, is the density of a centered Gaussian distribution with variance $\alpha/2$. So, for $p=2$  Part~(i) becomes the classical Maxwell--Poincar\'{e}--Borel theorem. The same conclusion applies to Parts~(ii) and (iii) since the densities $f_{2}(u)$ and $g_2(u)$ coincide with the  centered Gaussian density of variance $1/2$.
\end{example}

\begin{example}[Projections of the normalized surface measure on $\partial \mathbb B_p^n$]
Let $j(n) = n-1$ and recall that  $2 \Phi_{n-1}(\mathbb B_p^n, \cdot)= \mathcal H^{n-1}(\cdot) \llcorner \partial \mathbb B_p^n$.  So $X_n$ is distributed according to the normalized surface measure on $\partial \mathbb B_p^n$.  Part~(iii) of Theorem~\ref{theo:maxwell_curvature} with $m=1$ recovers the result of~\citet{Mogulskii1991English} as stated in~\eqref{eq:maxwell_for_ell_p_weak_conv}, \eqref{eq:p_gauss_density}. (Note, however, that the papers~\cite{Borovkov1991,Mogulskii1991English,RachevRuschendorf1991} more generally consider the setting in which $r=r(n)$ may depend on $n$ and provide estimates for the total variation distance.)
\end{example}

\begin{remark}[Continuity at the right edge]
Part~(iii) of Theorem~\ref{theo:maxwell_curvature}  is the formal limit of Part~(i) as $\alpha \uparrow 1$ in the sense that $\lim_{\alpha \uparrow 1} f_{p,\alpha} (u)= f_p(u)$ for all $u\in \R$.  To prove this, observe that by dominated convergence,  $\mathcal I_p(\theta) \to \mathcal I_p(0)>0$ and $\mathcal J_p(\theta) \to \mathcal J_p(0)>0$ as $\theta \downarrow 0$, which implies  $\theta\,\mathcal J_p(\theta)/ \mathcal I_p(\theta) \to 0$ as $\theta \downarrow 0$.  Since  $\theta_{p,\alpha}$ is the unique solution to~\eqref{eq:theta_alpha_equation_theo_maxwell_curvature} whose right-hand side goes to $0$ as $\alpha \uparrow 1$, it follows that  $\theta_{p,\alpha}\to 0$ as $\alpha\uparrow 1$. Substituting this in Equation~\eqref{eq:f_p_alpha_density_in_maxwell_laws} gives the claim.
\end{remark}

\begin{remark}[Continuity at the left edge]
Similarly, Part~(ii) of Theorem~\ref{theo:maxwell_curvature} is the formal limit of Part~(i) as $\alpha \downarrow 0$ in the sense that $\alpha^{-1/p} \xi_1^{(\alpha)}$ converges weakly to $\eta_1$ as $\alpha \downarrow 0$. We only sketch the argument.
Using the change of variables $x=\theta^{-1/(2p-2)}y$ in~\eqref{eq:def_I_p} and~\eqref{eq:def_J_p} together with  dominated convergence one shows that
\[
\mathcal I_p(\theta)\sim A_p\,\theta^{-\frac1{2p-2}},
\quad
\mathcal J_p(\theta)\sim B_p\,\theta^{-\frac12},
\quad
\theta\frac{\mathcal J_p(\theta)}{\mathcal I_p(\theta)}
\sim
\frac{B_p}{A_p}\,
\theta^{\frac{p}{2(p-1)}},
\quad
\theta\to\infty,
\]
where
\[
A_p:=\int_{\mathbb R}e^{-|y|^{2p-2}}\,dy
=
2\Gamma\!\left(\frac{2p-1}{2p-2}\right),
\qquad
B_p:=\int_{\mathbb R}|y|^{p-2}e^{-|y|^{2p-2}}\,dy
=
\frac{\sqrt{\pi}}{p-1}.
\]
From Equation~\eqref{eq:theta_alpha_equation_theo_maxwell_curvature} one obtains $\alpha^{\frac{2(p-1)}{p}}\theta_{p,\alpha} \to \lambda_0$ as $\alpha\downarrow 0$, where $\lambda_0$ is given by~\eqref{eq:g_p_u_density_maxwell}. Substituting this into~\eqref{eq:f_p_alpha_density_in_maxwell_laws} one shows that $\alpha^{1/p}f_{p,\alpha}(\alpha^{1/p}u) \to g_p(u)$ as $\alpha \downarrow 0$, where $g_p(u)$ is given by~\eqref{eq:g_p_u_density_maxwell}.
\end{remark}

A phenomenon similar to the one established in Theorem~\ref{theo:maxwell_curvature} for $\ell_p$-balls should hold for more general classes of high-dimensional convex bodies.

\begin{conjecture}[A continuum of Maxwell--Poincar\'{e}--Borel laws for curvature measures]
Consider a  ``sufficiently regular'' sequence of convex bodies $(K_n)_{n\geq 1}$ such that $\dim K_n = n$ for every $n$ and $\frac 1n \log \Vol_n K_n \to c\in \R$ as $n\to\infty$. Then there exists a family of probability measures $(\nu_\alpha)_{\alpha\in [0,1]}$ such that the following holds.  If $X_n=(X_{1;n},\ldots, X_{n;n})$ is a random point on $\partial K_n$ distributed according to the normalized curvature measure $\Phi_{j(n)}(K_n, \cdot)/ V_{j(n)}(K_n)$, where $j(n)/n \to \alpha \in [0,1]$  as $n\to\infty$, then for every $r\in \N$, the joint distribution of any $r$ coordinates of $X_n$ converges weakly to the product measure $\nu_\alpha^{\otimes r}$.
\end{conjecture}

Theorem~\ref{theo:maxwell_curvature} confirms this conjecture for $K_n = n^{1/p} \mathbb B_p^n$ with  $1<p<\infty$ (subject to  some additional restrictions if $\alpha =0$ or $1$). In the following examples we verify the conjecture for $p=1$ (scaled crosspolytopes, $K_n = n\, \mathbb B_1^n$) and $p=\infty$ (cubes,  $K_n = [-1,1]^n$). Since both families are polytopes and their external angles at all $j$-dimensional faces are equal, the $j$-th normalized  curvature measure  is the uniform distribution on the $j$-skeleton of the corresponding polytope; see Equation~(4.22) in~\cite{SchneiderBook}.   (The $j$-skeleton is defined as the union of all $j$-dimensional faces of the polytope.)

\begin{example}[Maxwell laws for curvature measures of cubes]
Let \(j\in\{0,\dots,n\}\). A convenient probabilistic construction of a random vector $X_n=(X_{1;n},\dots,X_{n;n})$
which is uniformly distributed on the \(j\)-skeleton of \([-1,1]^n\) is the following. Choose a random subset $J\subseteq \{1,\dots,n\}$
uniformly among all subsets of cardinality \(j\). Let \(U_1,\dots,U_n\) be i.i.d.\ random variables with law \(\mathrm{Unif}[-1,1]\), and let \(\varepsilon_1,\dots,\varepsilon_n\) be i.i.d.\ symmetric \(\{\pm1\}\)-valued random variables. Let all these random objects be independent. Define
\[
X_{i;n}=
\begin{cases}
U_i, & i\in J,\\[1mm]
\varepsilon_i, & i\notin J.
\end{cases}
\]
Then \(X_n\) is uniformly distributed on the \(j\)-skeleton of \([-1,1]^n\) with respect to \(j\)-dimensional Hausdorff measure; equivalently, its law is the normalized \(j\)-th curvature measure of \([-1,1]^n\).
Now consider the asymptotic regime in which \(n\to\infty\) and \(j=j(n)\) satisfies $j(n)/n \to \alpha\in[0,1]$ as $n\to\infty$.
Then, for every fixed \(r\in\mathbb N\), the distribution of $(X_{1;n},\dots,X_{r;n})$ converges weakly to the product measure $\nu_{\infty, \alpha}^{\otimes r}$,
where
$\nu_{\infty, \alpha}
=
\alpha\,\mathrm{Unif}[-1,1]
+
(1-\alpha)\left(\frac12\delta_{-1}+\frac12\delta_1\right).
$
In other words, the first \(r\) coordinates become asymptotically independent, and each coordinate converges in distribution to the mixture \(\nu_{\infty, \alpha}\).
\end{example}

\begin{example}[Maxwell laws for curvature measures of crosspolytopes]
Let \(j\in\{0,\dots,n-1\}\). A convenient probabilistic construction of a random vector $X_n=(X_{1;n},\dots,X_{n;n})$
which is uniformly distributed on the \(j\)-skeleton of \(n\mathbb B_1^n\) is as follows. Choose a random subset $J\subseteq \{1,\dots,n\}$
uniformly among all subsets of cardinality \(j+1\). Let $\eps_1,\dots, \eps_n$ be i.i.d.\ symmetric \(\{\pm1\}\)-valued random variables, and let $E_1,\ldots, E_n$ be i.i.d.\ \(\mathrm{Exp}(1)\) random variables, independent of each other and of \(J\). Define
\[
X_{i;n}
=
\begin{cases}
n\, \eps_i\,\frac{E_i}{\sum_{k\in J}E_k}, & i\in J,\\[3mm]
0, & i\notin J.
\end{cases}
\]
Then \(X_n\) is uniformly distributed on the \(j\)-skeleton of \(n\mathbb B_1^n\) with respect to \(j\)-dimensional Hausdorff measure.
Now consider the asymptotic regime in which \(n\to\infty\) and \(j=j(n)\) satisfies $j(n)/n \to \alpha\in[0,1]$.
Then, for every fixed \(r\in\mathbb N\), the distribution of $(X_{1;n},\dots,X_{r;n})$ converges weakly to the product measure $\nu_{1,\alpha}^{\otimes r}$, where $\nu_{1,\alpha}=(1-\alpha)\delta_0+\alpha\,\mathrm{Laplace}(\alpha)$.
Here, for \(\alpha>0\), \(\mathrm{Laplace}(\alpha)\) denotes the symmetric Laplace distribution on \(\mathbb R\) with density
$x\mapsto (\alpha/2)e^{-\alpha|x|}$, $x\in\mathbb R$,
and  $\mathrm{Laplace}(0)=\delta_0$.
\end{example}

\begin{remark}[Continuity at $p=\infty$ and at $p=1$]
Let $(\nu_{p,\alpha})_{\alpha \in [0,1]}$ denote the one-dimensional marginals of the limiting product measures appearing in Theorem~\ref{theo:maxwell_curvature}. Then, for every $\alpha \in [0,1]$, the measure $\nu_{p,\alpha}$ converges weakly to $\nu_{\infty, \alpha}$ as $p\to\infty$, and to $\nu_{1, \alpha}$ as $p\downarrow 1$. We omit the verification.
\end{remark}

\section{Proofs: Exact formulas} 
In this section we prove  Theorems~\ref{theo:V_j_ell_p_ball_exact} and~\ref{theo:V_j_ell_p_ellipsoid_exact}.

\subsection{Principal curvatures}
We begin by calculating  principal curvatures of the hypersurface in $\R^n$ given by an equation of the form $\varphi_1(x_1) +\ldots + \varphi_n(x_n) = 1$.

Let $\varphi_1,\ldots, \varphi_n:\R\to\R$ be functions such that, for each $i=1,\dots,n$,
\begin{itemize}
\item $\varphi_i(0)=0$ and $\varphi_i(z)>0$ for  $z\neq 0$;
\item $\varphi_i$ is strictly convex;
\item $\varphi_i$ is of class $C^1$ on $\mathbb R$, and of class $C^2$ on $\mathbb R\setminus\{0\}$.
\end{itemize}
A typical example (and in fact the one we are actually interested in) is  $\varphi_i(x_i) = |a_ix_i|^p$, where $p>1$ and $a_i>0$. The above properties imply that $\varphi_i'(0)=0$ and $\varphi_i'(x_i) \neq 0$ for $x_i\neq 0$. We consider
$$
K_n:=\{x\in\mathbb R^n:F(x)\leq 1\}  \qquad \text{ with }  \qquad F(x):=\sum_{i=1}^n \varphi_i(x_i), \quad x\in \R^n.
$$
It is easy to check that  $K_n$ is a convex body containing $0$ in its interior.  The boundary of $K_n$ is $\partial K_n = \{x\in \R^n: F(x) = 1\}$. Since the function $F$ is $C^1$ on $\R^n$ and since its gradient
$$
\nabla F(x)=\bigl(\varphi_1'(x_1),\dots,\varphi_n'(x_n)\bigr)
$$
does not vanish for $x\in \partial K_n$, the regular level set theorem implies  that $\partial K_n$ is a $C^1$-hypersurface. The outer unit normal at $x\in \partial K_n$ is given by
\[
\nu(x)=\frac{\nabla F(x)}{|\nabla F(x)|}.
\]
The tangent space at $x$ is  $T_x (\partial K_n)= \{v\in\mathbb R^n:\sum_{i=1}^n \varphi_i'(x_i) v_i=0\}$.  For the differential-geometric notions used below, such as principal curvatures, we refer to~\cite[Sections~2.4 and~2.5]{SchneiderBook}.

\begin{proposition}[Principal curvatures of $\partial K_n$]\label{prop:curvatures_separable_hypersurface}
Fix a point $x\in \partial K_n$ such that $x_i\neq 0$ for all $i=1,\ldots,n$.
Then $\partial K_n$ is a $C^2$-hypersurface in the neighborhood of $x$ and
the principal curvatures $\lambda_1(x),\dots,\lambda_{n-1}(x)$  are precisely the solutions (in the variable $\lambda$) of the equation
\begin{equation}\label{eq:characteristic_eq_principal_curvatures}
\sum_{i=1}^n \bigl(\varphi_i'(x_i)\bigr)^2
\prod_{\substack{j=1\\ j\neq i}}^n
\left(\varphi_j''(x_j)-|\nabla F(x)|\,\lambda\right) = 0.
\end{equation}
The degree of the polynomial on the right-hand side is exactly $n-1$.
\end{proposition}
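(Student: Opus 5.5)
Since each $x_i\neq 0$, every $\varphi_i$ is $C^2$ near $x_i$, so $F$ is $C^2$ in a neighborhood of $x$ with nonvanishing gradient. By the regular level set theorem, $\partial K_n$ is then a $C^2$-hypersurface near $x$. The principal curvatures at $x$ are the eigenvalues of the Weingarten map. For a level set $\{F=1\}$ with outer normal $\nu=\nabla F/|\nabla F|$, this map is the restriction to $T_x(\partial K_n)$ of $d\nu$, followed by projection onto the tangent space. Equivalently, it is the self-adjoint operator on $T=T_x(\partial K_n)$ whose quadratic form is $v\mapsto \langle \nabla^2F(x)v,v\rangle/|\nabla F(x)|$. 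The reason is that the derivative of $\nabla F/|\nabla F|$ differs from $\nabla^2 F/|\nabla F|$ only by a term in the normal direction, which the projection removes. Here $\nabla^2F(x)=D:=\operatorname{diag}(\varphi_1''(x_1),\dots,\varphi_n''(x_n))$. Write $g:=\nabla F(x)$ and $c:=|g|$. Then $\lambda$ is a principal curvature iff $\det\bigl(P_TDP_T - c\lambda I\bigr)|_T=0$, where $P_T$ is the orthogonal projection onto $T=g^\perp$.

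I will compute this restricted characteristic polynomial through a bordered determinant. The standard identity for a symmetric $D$ and a nonzero $g$ is
\[
\det\begin{pmatrix} D-\mu I & g\\ g^{\top} & 0\end{pmatrix} = -|g|^2\,\det\bigl((P_TDP_T-\mu I)|_T\bigr).
\]
To prove it, choose an orthonormal basis whose last vector is $g/|g|$ and conjugate the top-left block by the corresponding orthogonal matrix. The bordered matrix then has last row and column supported on the $(n,n+1)$ entries, equal to $|g|$, apart from the corner zero. Expanding along the last row and then the last column deletes the $g/|g|$ row and column, which leaves exactly the compression of $D-\mu I$ to $T$, times $-|g|^2$.

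On the other hand, $D-\mu I$ is diagonal, so the bordered determinant can be computed directly. By the Schur complement (valid when all $d_i-\mu\neq0$), or simply by cofactor expansion, it equals
\[
-\sum_{i=1}^n g_i^2\prod_{j\ne i}(d_j-\mu),\qquad d_j=\varphi_j''(x_j).
\]
This is a polynomial identity, so it holds for every $\mu$. Setting $\mu=c\lambda$ and comparing the two expressions shows that the principal curvatures are exactly the roots of~\eqref{eq:characteristic_eq_principal_curvatures}, counted with multiplicity.

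For the degree claim, the coefficient of $\lambda^{n-1}$ in~\eqref{eq:characteristic_eq_principal_curvatures} is $(-c)^{n-1}\sum_i g_i^2=(-1)^{n-1}c^{n+1}\neq 0$, so the polynomial has degree exactly $n-1$. The main point to check carefully is the first step: that the Weingarten map is indeed the compression of $D/|g|$ to $T$, with the sign convention matching outer normals and convexity, so that all curvatures are $\ge0$. I expect this to follow from differentiating $\nu=\nabla F/|\nabla F|$ along tangent vectors $v$. That derivative is $Dv/c+(\text{scalar})\,g$, and projecting onto $T$ kills the second term.
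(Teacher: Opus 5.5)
Your proposal is correct and follows essentially the same route as the paper: both identify the principal curvatures with the eigenvalues of the compression of $\nabla^2F(x)/|\nabla F(x)|$ to $T_x(\partial K_n)$, and both evaluate the bordered determinant $\det\begin{pmatrix} D-\mu I & g\\ g^{\top} & 0\end{pmatrix}$ for diagonal $D$. Your orthogonal-change-of-basis identity linking this bordered determinant to the characteristic polynomial of the compression is a small sharpening: it shows the roots agree with multiplicity, which the paper uses tacitly in the next proposition, and your leading-coefficient computation supplies the proof of the degree claim that the paper only states.
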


\begin{proof}
The function $F$ is of class $C^2$ in the neighborhood of $x$. Its gradient and Hessian are given by
$$
\nabla F(x)=\bigl(\varphi_1'(x_1),\dots,\varphi_n'(x_n)\bigr),\qquad
\nabla^2F(x)=\operatorname{diag}\bigl(\varphi_1''(x_1),\dots,\varphi_n''(x_n)\bigr).
$$
The second fundamental form of the hypersurface $\partial K_n =\{F(x)=1\}$ at $x\in \partial K_n$ can be expressed in terms of the Hessian and gradient of $F$ by
$$
\mathrm{II}_x(v,w)
=
\frac{\nabla^2F(x)[v,w]}{|\nabla F(x)|}
=
\frac{1}{|\nabla F(x)|}\sum_{i=1}^n \varphi_i''(x_i)\,v_i w_i,
\qquad v,w\in T_x(\partial K_n),
$$
see, for example, Equation~(2.4) in~\cite{Jubin2023}.
A number \(\lambda\in\mathbb R\) is a principal curvature of \(\partial K_n\) at \(x\) if and only if there exists $v\in T_x(\partial K_n)\setminus\{0\}$ such that
$$
\mathrm{II}_x(v,w)=\lambda \,\langle v,w\rangle
\qquad\text{for all } w\in T_x(\partial K_n).
$$
Equivalently,
$$
\frac{1}{|\nabla F(x)|}\sum_{i=1}^n \varphi_i''(x_i)\,v_i w_i
=
\lambda\sum_{i=1}^n v_i w_i
\qquad\text{for all } w\in T_x(\partial K_n).
$$
This is equivalent to saying that the vector
$$
\bigl(\varphi_1''(x_1)v_1-|\nabla F(x)|\lambda v_1,\dots,
\varphi_n''(x_n)v_n-|\nabla F(x)|\lambda v_n\bigr)
$$
is orthogonal to \(T_x(\partial K_n)\). Now,  $T_x(\partial K_n)^\perp$ is the line spanned by $\nabla F(x)$. Hence, $\lambda$ is a principal curvature if and only if the linear system
$$
\bigl(\varphi_i''(x_i)-|\nabla F(x)|\lambda\bigr)v_i
=
\mu\,\varphi_i'(x_i),
\qquad
i=1,\dots,n,
\qquad
\sum_{i=1}^n \varphi_i'(x_i)\,v_i=0
$$
admits a nonzero solution \((v_1,\dots,v_n,\mu)\). Equivalently,
$$
\det
\begin{pmatrix}
\varphi_1''(x_1)-|\nabla F(x)|\lambda & 0 & \cdots & 0 & -\varphi_1'(x_1)\\
0 & \varphi_2''(x_2)-|\nabla F(x)|\lambda & \cdots & 0 & -\varphi_2'(x_2)\\
\vdots & \vdots & \ddots & \vdots & \vdots\\
0 & 0 & \cdots & \varphi_n''(x_n)-|\nabla F(x)|\lambda & -\varphi_n'(x_n)\\
\varphi_1'(x_1) & \varphi_2'(x_2) & \cdots & \varphi_n'(x_n) & 0
\end{pmatrix}
=0.
$$
Expanding this determinant, one obtains the equivalent equation~\eqref{eq:characteristic_eq_principal_curvatures}.
\end{proof}

\subsection{Elementary symmetric polynomials of the principal curvatures}
Let $\sigma_{m-1}(x_1,\ldots, x_r)$ denote the $(m-1)$-st elementary symmetric polynomial in $r$ variables
$$
\sigma_{m-1}(x_1,\dots, x_r) = \sum_{1\leq i_1 < \ldots < i_{m-1} \leq r} x_{i_1} \ldots x_{i_{m-1}}=\sum_{\substack{I\subset \{1,\ldots,r\}\\|I|=m-1}}\prod_{i\in I}x_i.
$$

\begin{proposition}[Elementary symmetric functions of the principal curvatures] \label{prop:curvatures_separable_hypersurface_symmetric_poly}
Fix a point $x\in \partial K_n$ such that $x_i\neq 0$ for all $i=1,\ldots,n$.
Let $\lambda_1(x),\dots,\lambda_{n-1}(x)$ be the principal curvatures of $\partial K_n$ at $x$. 
Then, for all \(m=1,\dots,n\),
\begin{equation}\label{eq:curvatures_elementary_symm_poly}
\sigma_{m-1}\bigl(\lambda_1(x),\dots,\lambda_{n-1}(x)\bigr)\\
=\frac{1}{
\left(\sum_{i=1}^n (\varphi_i'(x_i))^2\right)^{\frac{m+1}{2}}
}\sum_{i=1}^{n}(\varphi_i'(x_i))^2\sum_{\substack{I\subset \{1,\ldots,r\}\setminus\{i\}\\|I|=m-1}}\prod_{j\in I}\phi''_j(x_j).
\end{equation}

\end{proposition}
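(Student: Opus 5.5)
The plan is to read off the elementary symmetric polynomials of $\lambda_1(x),\dots,\lambda_{n-1}(x)$ as coefficients of the degree $n-1$ polynomial from Proposition~\ref{prop:curvatures_separable_hypersurface}. Abbreviate $g_i:=\varphi_i'(x_i)$, $h_i:=\varphi_i''(x_i)$ and $G:=|\nabla F(x)|=(\sum_i g_i^2)^{1/2}$. Define
\[
P(\lambda):=\sum_{i=1}^n g_i^2\prod_{j\neq i}\bigl(h_j-G\lambda\bigr).
\]
By Proposition~\ref{prop:curvatures_separable_hypersurface}, $P$ has degree exactly $n-1$ and its roots are exactly the principal curvatures. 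The one point to check is that they are the roots \emph{with multiplicity}, so that $P(\lambda)=c\prod_{k=1}^{n-1}(\lambda_k-\lambda)$ for a constant $c$.

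To get multiplicities, I will show that $P(\lambda)$ is, up to a nonzero constant, the characteristic polynomial of the shape operator, that is, $\det(\mathrm{II}_x-\lambda\,\mathrm{id})$ on $T_x(\partial K_n)$. The bordered determinant in the proof of Proposition~\ref{prop:curvatures_separable_hypersurface} does exactly this. Choose an orthonormal basis $u_1,\dots,u_{n-1}$ of $T_x(\partial K_n)$ together with $\nu=\nabla F/G$, and let $Q$ be the orthogonal matrix with these columns. Conjugating the block matrix $\begin{pmatrix} D-G\lambda I & -g\\ g^{\top}&0\end{pmatrix}$, where $D=\operatorname{diag}(h_i)$, by $\operatorname{diag}(Q,1)$ gives a matrix of the form
\[
\begin{pmatrix} M & * & 0\\ * & * & -G\\ 0 & G & 0\end{pmatrix}.
\]
Here $M$ is the tangential block of $D-G\lambda I$, which equals $G(\mathrm{II}_x-\lambda)$ in the basis $(u_k)$, and the zero blocks come from $Q^{\top}g=(0,\dots,0,G)^{\top}$. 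Expanding along the last row and then the last column gives the determinant $G^2\det M=G^{\,n+1}\det(\mathrm{II}_x-\lambda)$. Since the determinant also equals $P(\lambda)$ by the Schur-complement expansion already used, we get
\[
P(\lambda)=G^{\,n+1}\prod_{k=1}^{n-1}(\lambda_k-\lambda).
\]

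Next, expand both sides in powers of $\lambda$. On the right, the coefficient of $\lambda^{n-m}$ is $G^{n+1}(-1)^{n-m}\sigma_{m-1}(\lambda_1,\dots,\lambda_{n-1})$. On the left, expand each product $\prod_{j\neq i}(h_j-G\lambda)$: choosing $h_j$ for $j\in I$ with $|I|=m-1$ and $-G\lambda$ for the other $n-m$ indices gives the coefficient
\[
\sum_{i=1}^n g_i^2\,(-G)^{n-m}\sum_{\substack{I\subseteq\{1,\dots,n\}\setminus\{i\}\\ |I|=m-1}}\prod_{j\in I}h_j.
\]
Equating the two coefficients, the signs $(-1)^{n-m}$ cancel and dividing by $G^{n+1}$ leaves the factor $G^{n-m}/G^{n+1}=G^{-(m+1)}=(\sum_i g_i^2)^{-(m+1)/2}$. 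This is exactly \eqref{eq:curvatures_elementary_symm_poly}, read with the index set $\{1,\dots,n\}\setminus\{i\}$ and $\phi''_j=\varphi''_j$.

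The only step that needs care is the normalization identity $P(\lambda)=G^{n+1}\det(\mathrm{II}_x-\lambda)$, which secures both the multiplicities and the exact constant. A quick cross-check of the constant is to compare the leading coefficients: for $\lambda^{n-1}$, the left side gives $\sum_i g_i^2(-G)^{n-1}=(-1)^{n-1}G^{n+1}$, which matches the right side.
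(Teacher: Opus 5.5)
Your proposal is correct and follows the paper's argument. The paper likewise writes $\sum_{i}(\varphi_i'(x_i))^2\prod_{j\neq i}(\varphi_j''(x_j)-|\nabla F(x)|\lambda)=|\nabla F(x)|^{n+1}\prod_{r=1}^{n-1}(\lambda_r(x)-\lambda)$ and compares coefficients of $\lambda^{n-m}$. The difference is that the paper asserts this factorization directly from Proposition~\ref{prop:curvatures_separable_hypersurface}. Your conjugation of the bordered determinant by $\operatorname{diag}(Q,1)$ actually proves it, including root multiplicities and the exact constant $|\nabla F(x)|^{n+1}$.
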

\begin{remark}
Observe that the right-hand side of formula~\eqref{eq:curvatures_elementary_symm_poly} can also be written as
$$
\frac{1}{\left(\sum_{i=1}^n (\varphi_i'(x_i))^2\right)^{\frac{m+1}{2}}}
\sum_{i=1}^n \bigl(\varphi_i'(x_i)\bigr)^2\,
\sigma_{m-1}\!\bigl(
\varphi_1''(x_1),\dots,\widehat{\varphi_i''(x_i)},\dots,\varphi_n''(x_n)
\bigr)
$$
in order to conform with notation introduced in Example~\ref{ex:ellipsoids_intrinsic_volumes}. Here, \(\widehat{\varphi_i''(x_i)}\) indicates that the \(i\)-th entry is omitted. However, throughout the proofs, for the sake of notational simplicity, we shall use the alternative notation given above.
\end{remark}

\begin{proof}
Since the principal curvatures \(\lambda_1(x),\dots,\lambda_{n-1}(x)\) are the solutions of~\eqref{eq:characteristic_eq_principal_curvatures}, we may write
$$
\sum_{i=1}^n \bigl(\varphi_i'(x_i)\bigr)^2
\prod_{\substack{j=1\\ j\neq i}}^n
\left(\varphi_j''(x_j)-|\nabla F(x)|\,\lambda\right)
=
|\nabla F(x)|^{\,n+1}\prod_{r=1}^{n-1}(\lambda_r(x)-\lambda).
$$
Expanding both sides in powers of \(\lambda\) and comparing coefficients, one obtains~\eqref{eq:curvatures_elementary_symm_poly}.
\end{proof}

\subsection{Curvature measures on the weighted \texorpdfstring{$\ell_p$}{ell\_p}-balls}
Fix $n\ge 2$,  $1<p<\infty$, the weights $a_1,\ldots, a_n >0$ and consider the function $F:\R^n\to [0,\infty)$ given by
$$
F(x):= \sum_{r=1}^n |a_r x_r|^p, \qquad x= (x_1,\ldots, x_n)\in \R^n.
$$
The weighted $\ell_p$-ball and its boundary are then given by
\[
\mathbb B_p^n(a) = 
\{x\in \R^n: F(x) \leq 1\} \qquad\text{and}
\qquad
\partial \mathbb B_p^n(a) = \{x\in \R^n: F(x) = 1\}.
\]

\begin{proposition}[Boundary of  weighted $\ell_p$-balls]
With the above notation, the following hold.
\begin{itemize}
\item[(a)] For every $p>1$, the boundary  $\partial \mathbb B_p^n(a)$ is a $C^1$-hypersurface.  The outer unit normal at $x\in \partial \mathbb B_p^n(a)$ (the Gauss map of $\mathbb B_p^n(a)$ at $x$) is given by
\begin{equation}\label{eq:nu_outer_normal}
\nu(x)=\frac{\nabla F(x)}{\|\nabla F(x)\|}
=
\frac{\left(a_1^p |x_1|^{p-2}x_1,\ldots, a_n^p |x_n|^{p-2}x_n\right)}
{\left(\sum_{i=1}^n a_i^{2p}|x_i|^{2p-2}\right)^{1/2}}.
\end{equation}
Let $q=\frac{p}{p-1}$ be the H\"older conjugate of $p$.  The support function of $\mathbb B_p^n(a)$ is
\begin{equation}\label{eq:support_funct_ell_p_ellipsoid}
h_{\mathbb B_p^n(a)}(u) := \sup_{x\in \mathbb B_p^n(a)} \langle u, x\rangle
=
\left(\sum_{i=1}^n |u_i/a_i|^q\right)^{1/q}, \qquad u\in \R^n.
\end{equation}
\item[(b)] For every $p>1$, the boundary is $C^2$ near every point $x\in \partial \mathbb B_p^n(a)$ such that $x_i \neq 0$ for every $i=1,\ldots, n$.  If $p\geq 2$, then the boundary $\partial \mathbb B_p^n(a)$ is $C^2$ at every point.
\item[(c)] For all $p>1$, the curvature measures of $\mathbb B_p^n(a)$ do not charge the set $\partial \mathbb B_p^n(a) \cap \{x_1\ldots x_n=0\}$.
\end{itemize}
\end{proposition}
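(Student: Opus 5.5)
Parts (a) and (b) are checked directly with the general framework above. Parts (a) and the first half of (b) follow by taking $\varphi_i(z)=|a_iz|^p$. This function is strictly convex, vanishes only at $0$, and has derivative $\varphi_i'(z)=p\,a_i^p|z|^{p-2}z$, which is continuous on $\mathbb R$ because $p>1$; it is $C^\infty$ away from $0$. So the regular level set argument preceding Proposition~\ref{prop:curvatures_separable_hypersurface} gives that $\partial\mathbb B_p^n(a)$ is a $C^1$-hypersurface. The unit normal is $\nabla F/|\nabla F|$, and the factor $p$ cancels, which gives~\eqref{eq:nu_outer_normal}. On the set where all $x_i\neq0$, $F$ is $C^2$ and $\nabla F\neq0$, so the boundary is $C^2$ there.

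For the support function~\eqref{eq:support_funct_ell_p_ellipsoid} I would substitute $y_i=a_ix_i$. This turns $\mathbb B_p^n(a)$ into $\mathbb B_p^n$ and $\langle u,x\rangle$ into $\sum_i (u_i/a_i)y_i$. Hölder's inequality then bounds the supremum by $\|(u_i/a_i)_i\|_q$, and the choice $y_i\propto \operatorname{sign}(u_i)|u_i/a_i|^{q-1}$ attains the bound. For $p\ge2$, $\varphi_i''(z)=p(p-1)a_i^p|z|^{p-2}$ extends continuously to $z=0$ (it equals $0$ there if $p>2$, and is constant if $p=2$). Hence $F\in C^2(\mathbb R^n)$ with nonvanishing gradient on the boundary, and the boundary is $C^2$ everywhere.

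Part (c) is the main point. The set $Z:=\partial\mathbb B_p^n(a)\cap\{x_1\cdots x_n=0\}$ is a finite union of the sets $Z_i=\partial\mathbb B_p^n(a)\cap\{x_i=0\}$, and each $Z_i$ is a compact $(n-2)$-dimensional $C^1$ submanifold (an $\ell_p$-sphere in a hyperplane), so $\mathcal H^{n-1}(Z)=0$. For $p\ge2$ the boundary is $C^2$, and formula~\eqref{eq:density_curvature_measure} applied with $B=Z$ immediately gives $\Phi_j(\mathbb B_p^n(a),Z)=0$. For $1<p<2$ the density formula is unavailable near $Z$, because curvatures blow up there, so I would go back to the local Steiner formula. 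It suffices to show that $\Vol_n(A_\rho(K,Z))=0$ for all $\rho>0$, since the polynomial identity in $\rho$ then forces every $\Phi_j(K,Z)=0$. Now $A_\rho(K,Z)\subseteq\{x+t\nu(x):x\in Z,\ 0<t\le\rho\}$, because $K$ has a unique normal at each boundary point, being $C^1$. The map $(x,t)\mapsto x+t\nu(x)$ is continuous, but it is not Lipschitz, since $\nu$ is only Hölder near $Z$. This is the main obstacle.

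To get around it I would use duality. At $x\in Z_i$ we have $\nu_i(x)=0$, so $A_\rho(K,Z)\subseteq\bigcup_i\{y\notin K:\ \nu_i(p_K(y))=0\}$. The normal cone map is the gradient of the support function, so the normals $u=\nu(x)$ with $x\in Z_i$ range over unit vectors with $u_i=0$. The point $y=x+t\nu(x)$ is determined by $(u,t)$ through $y=\nabla h_K(u)+tu$. The set of such $(u,t)$ has dimension $(n-2)+1=n-1$. Moreover, by~\eqref{eq:support_funct_ell_p_ellipsoid}, $h_K$ is $C^{1,1}$ away from the origin when $q>2$, that is, when $p<2$: the function $|u_i|^q$ then has a Lipschitz derivative. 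So the map $(u,t)\mapsto\nabla h_K(u)+tu$ is locally Lipschitz from an $(n-1)$-dimensional set into $\mathbb R^n$. Its image therefore has Lebesgue measure zero, which gives $\Vol_n(A_\rho(K,Z))=0$ and completes (c).
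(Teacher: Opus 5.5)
Your proof is correct. Parts (a), (b) and the case $p\ge 2$ of (c) follow the paper's argument essentially verbatim: the regular level set theorem, H\"older's inequality for the support function, and the density formula for curvature measures combined with $\mathcal H^{n-1}(Z)=0$.

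For $1<p<2$ in (c), however, you take a genuinely different route. The paper notes that $h_{\mathbb B_p^n(a)}$ is $C^2$ on $\mathbb S^{n-1}$ because $q>2$. It then cites the theorem that a $C^2$ support function makes all area measures absolutely continuous with respect to spherical Lebesgue measure. Finally, it transfers this to the curvature measures, using that they are push-forwards of the area measures under the inverse Gauss map, which carries $\mathbb S^{n-1}\cap\{u_1\cdots u_n=0\}$ onto $Z$.

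You instead work directly from the defining local Steiner formula. You parametrize $A_\rho(K,Z)$ by $(u,t)\mapsto\nabla h_K(u)+tu$ on the $(n-1)$-dimensional sets $\{u\in\mathbb S^{n-1}:u_i=0\}\times(0,\rho]$. You then observe that this map is Lipschitz, since $\nabla h_K$ is $C^1$ away from the origin when $q>2$. This gives $\Vol_n(A_\rho(K,Z))=0$, which forces every coefficient $\Phi_j(K,Z)$ to vanish.

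Both arguments rest on the same two observations: the smoothness of $h_K$ for $q>2$, and the fact that the Gauss map sends $Z$ into the coordinate hyperplanes of the sphere. Your argument is essentially the localized mechanism behind the cited absolute-continuity theorem, written out by hand.

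Your version buys self-containedness. It needs only $C^{1,1}$ regularity of $h_K$, the local Steiner formula, and the elementary fact that Lipschitz images of $(n-1)$-dimensional sets are Lebesgue-null. It makes no appeal to area or support measures, nor to bijectivity of the Gauss map. The paper's version is shorter given the literature, and it yields the stronger structural fact that the area measures of $\mathbb B_p^n(a)$ are absolutely continuous on the whole sphere.
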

\begin{proof}
The function $F(x)$ is $C^1$ for every $p>1$ and its gradient is
$$
\nabla F(x)
=
p\left(a_1^p |x_1|^{p-2}x_1, \ldots, a_n^p |x_n|^{p-2}x_n\right), \qquad x\in \R^n.
$$
In particular, $\nabla F(x) \neq 0$ for every $x\in \partial \mathbb B_p^n(a)$. It follows that $\partial \mathbb B_p^n(a)$ is a hypersurface of class $C^1$ for every $p>1$ with an outer unit normal given by~\eqref{eq:nu_outer_normal}. If, moreover, $p>2$, then the function $F(x)$ is $C^2$ and  $\partial \mathbb B_p^n(a)$ is a hypersurface of class $C^2$.  For every $p>1$, the function $x_i \mapsto |x_i|^p$ is $C^2$ on $\R\backslash \{0\}$. It follows that $F(x)$ is $C^2$ on the complement of $\{x_1\ldots x_n = 0\}$. The formula for the support function follows from the H\"{o}lder inequality, which implies that the right-hand side of~\eqref{eq:support_funct_ell_p_ellipsoid} is an upper bound for $h_{\mathbb{B}_p^n(a)}(u)$; moreover, it is easy to verify that this bound is attained for some $x = x(u) \in \mathbb{B}_p^n(a)$. This proves (a) and~(b).

\vspace*{2mm}
\noindent
Proof of (c).  If $p\geq 2$, then the boundary of $\mathbb B_p^n(a)$ is $C^2$ everywhere and the curvature measures are absolutely continuous w.r.t.\ the surface measure $\mathcal H^{n-1} \llcorner \partial \mathbb B_p^n(a)$; see~\eqref{eq:density_curvature_measure}. This gives the claim.

For $1<p<2$, we observe that although the boundary is not $C^2$, the support function of $\mathbb B_{p}^n(a)$, given by~\eqref{eq:support_funct_ell_p_ellipsoid}, is $C^2$ on the unit sphere $\mathbb S^{n-1}$ since $q>2$.  This implies that the area measures (living on the unit sphere in $\R^n$) are absolutely continuous w.r.t.\ the surface measure on $\mathbb S^{n-1}$; see~\cite[p.~285]{Schneider1993} or~\cite[Theorem~4.9]{HugWeil2020}.   (The area measures are the images of the support measures under the Gauss map $x\mapsto \nu(x)$; see~\cite[Theorem~4.2.5]{SchneiderBook}.) It follows from~\eqref{eq:nu_outer_normal} that the Gauss map $x\mapsto \nu(x)$ is a continuous bijection between $\partial \mathbb B_p^n(a)$ and $\mathbb S^{n-1}$. In particular, the inverse of the Gauss map is explicitly given by
\[
\mathcal \nu^{-1}(u)
=
\frac{(a_1^{-q}|u_1|^{q-2}u_1,\ldots, a_n^{-q}|u_n|^{q-2}u_n)}{h_{\mathbb B_p^n(a)}(u)^{q-1}},
\qquad u\in \mathbb S^{n-1}.
\]
Hence, $\nu$ maps the set $\partial \mathbb B_p^n(a)\cap \{x_1\ldots x_n=0\}$ to the set $\mathbb S^{n-1} \cap \{u_1\ldots u_n=0\}$. The latter set is not charged by the area measures since these are absolutely continuous w.r.t.\ the surface measure on $\mathbb S^{n-1}$. Now, the curvature measures are the push-forward of the area measures under the inverse Gauss map $u\mapsto \nu^{-1}(u)$; see~\cite[Theorem~4.2.5]{SchneiderBook}.  It follows that the curvature measures do not charge the set $\partial \mathbb B_p^n(a) \cap \{x_1\ldots x_n=0\}$, which is exactly (c).
\end{proof}

\begin{proposition}[Principal curvatures for weighted $\ell_p$-balls]\label{prop:curvatures_elementary_symmetric_functions}
Let $x\in \partial \mathbb B_p^n(a)$ be such that $x_i\neq 0$ for all $i=1,\dots, n$.  Then the principal curvatures of $\mathbb B_p^n(a)$ at $x$, denoted by $\lambda_1(x),\dots,\lambda_{n-1}(x)$, are precisely the solutions of the degree $n-1$ polynomial equation
\[
\sum_{i=1}^n a_i^{2p}|x_i|^{2p-2}
\prod_{\substack{j=1\\ j\ne i}}^n
\left((p-1)a_j^p|x_j|^{p-2}
-
\left(\sum_{r=1}^n a_r^{2p}|x_r|^{2p-2}\right)^{1/2}\lambda
\right)=0.
\]
For $m=1,\ldots, n$ the $(m-1)$-st elementary symmetric polynomial of the principal curvatures is given by
\[
\sigma_{m-1}\bigl(\lambda_1(x),\dots,\lambda_{n-1}(x)\bigr)
=
\frac{(p-1)^{m-1}}
{\left(\sum_{r=1}^n a_r^{2p}|x_r|^{2p-2}\right)^{(m+1)/2}}
\sum_{i=1}^n a_i^{2p}|x_i|^{2p-2}\sum_{\substack{I\subseteq\{1,\dots,n\}\setminus\{i\}\\ |I|=m-1}}
\prod_{j\in I} a_j^p|x_j|^{p-2}.
\]
\end{proposition}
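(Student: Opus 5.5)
This is a direct specialization of Propositions~\ref{prop:curvatures_separable_hypersurface} and~\ref{prop:curvatures_elementary_symmetric_functions}. I would apply them with $\varphi_i(z)=|a_iz|^p$. These functions satisfy the standing hypotheses: $\varphi_i(0)=0$, positivity off $0$, strict convexity for $p>1$, $C^1$ on $\mathbb R$, and $C^2$ on $\mathbb R\setminus\{0\}$. The point $x\in\partial\mathbb B_p^n(a)$ has all coordinates nonzero, so both earlier propositions apply at $x$.

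The first step is to compute the derivatives at $x_i\neq0$:
\[
\varphi_i'(x_i)=p\,a_i^p|x_i|^{p-2}x_i,\qquad \varphi_i''(x_i)=p(p-1)a_i^p|x_i|^{p-2}.
\]
Hence
\[
(\varphi_i'(x_i))^2=p^2a_i^{2p}|x_i|^{2p-2},\qquad |\nabla F(x)|=p\,S^{1/2},\quad S:=\sum_{r=1}^n a_r^{2p}|x_r|^{2p-2}.
\]

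For the characteristic equation, I would substitute these into~\eqref{eq:characteristic_eq_principal_curvatures}. Every factor $\varphi_j''(x_j)-|\nabla F(x)|\lambda$ equals $p\bigl((p-1)a_j^p|x_j|^{p-2}-S^{1/2}\lambda\bigr)$, and each $(\varphi_i')^2$ carries a factor $p^2$. Pulling out the overall constant $p^{2}\cdot p^{n-1}=p^{n+1}\neq0$ gives exactly the displayed equation. The degree $n-1$ claim is inherited from Proposition~\ref{prop:curvatures_separable_hypersurface}; it can also be seen directly, since the $\lambda^{n-1}$ coefficient is $(-S^{1/2})^{n-1}S\neq0$.

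For the elementary symmetric polynomials, I would plug the same quantities into~\eqref{eq:curvatures_elementary_symm_poly}, reading the index set there as $\{1,\dots,n\}\setminus\{i\}$. The numerator contributes $p^2\cdot(p(p-1))^{m-1}$ times
\[
\sum_i a_i^{2p}|x_i|^{2p-2}\sum_I\prod_{j\in I}a_j^p|x_j|^{p-2}.
\]
The denominator is $(p^2S)^{(m+1)/2}=p^{m+1}S^{(m+1)/2}$. The powers of $p$ cancel, since $p^{2+m-1}/p^{m+1}=1$, which leaves $(p-1)^{m-1}$ and yields the stated formula.

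There is no real obstacle here. The only care points are verifying the hypotheses of the earlier propositions (in particular strict convexity and $C^1$-smoothness at $0$ for $1<p<2$) and tracking the powers of $p$ so that they cancel exactly.
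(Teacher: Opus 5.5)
Your proposal is correct and matches the paper's proof. The paper likewise specializes Propositions~\ref{prop:curvatures_separable_hypersurface} and~\ref{prop:curvatures_separable_hypersurface_symmetric_poly} to $\varphi_i(z)=|a_iz|^p$ and lets the powers of $p$ cancel, and your bookkeeping of those powers, including reading the index set as $\{1,\dots,n\}\setminus\{i\}$, is right. One cosmetic fix: your opening sentence cites the statement itself (\ref{prop:curvatures_elementary_symmetric_functions}) where you mean Proposition~\ref{prop:curvatures_separable_hypersurface_symmetric_poly}.
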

\begin{proof}
Apply Propositions~\ref{prop:curvatures_separable_hypersurface} and~\ref{prop:curvatures_separable_hypersurface_symmetric_poly} with $\varphi_i(x_i)=|a_i x_i|^p$. Then
\begin{align*}
&\varphi_i'(x_i)=p\,a_i^p |x_i|^{p-2}x_i, \quad x_i\in \R,\\
&\varphi_i''(x_i)=p(p-1)a_i^p |x_i|^{p-2}, \quad x_i\neq 0,
\end{align*}
and
$$
F(x)=\sum_{r=1}^n |a_r x_r|^p,
\qquad
|\nabla F(x)|
=
p\left(\sum_{r=1}^n a_r^{2p}|x_r|^{2p-2}\right)^{1/2},\quad x\in\mathbb{R}^n.
$$
The results follow readily.
\end{proof}
\begin{example}[Gauss curvature]\label{example:gauss_curvature}
In particular,  the Gauss curvature is
\[
K(x) := \prod_{r=1}^{n-1}\lambda_r(x)
=
\frac{(p-1)^{n-1}}
{\left(\sum_{r=1}^n a_r^{2p}|x_r|^{2p-2}\right)^{(n+1)/2}}
\sum_{i=1}^n a_i^{2p}|x_i|^{2p-2}
\prod_{\substack{j=1\\ j\ne i}}^n a_j^p|x_j|^{p-2},\quad x\in \partial \mathbb{B}_p^n(a).
\]
Using the identity $\sum_{i=1}^n |a_i x_i|^p=1$, the Gauss curvature may also be written as
\[
K(x)
=
\frac{(p-1)^{n-1}\prod_{j=1}^n a_j^p|x_j|^{p-2}}
{\left(\sum_{r=1}^n a_r^{2p}|x_r|^{2p-2}\right)^{(n+1)/2}},\quad x\in \partial \mathbb{B}_p^n(a).
\]
\end{example}

\begin{proposition}[Curvature measures of weighted $\ell_p$-balls]\label{prop:curvature_measures_density}
For every \(m=1,\dots,n\), the curvature measure $\Phi_{n-m}(\mathbb B_p^n(a),\cdot)$ is absolutely continuous with respect to the surface measure $\mathcal H^{n-1} \llcorner \partial \mathbb B_p^n(a)$ and the density is
\[
\frac{d\Phi_{n-m}\bigl(\mathbb B_p^n(a),x\bigr)}{d\mathcal H^{n-1}(x)}
=
\frac{(p-1)^{m-1}}{m\,\kappa_m}\,
\sum_{i=1}^n  \sum_{\substack{I\subseteq\{1,\dots,n\}\setminus\{i\}\\ |I|=m-1}}
\frac{
a_i^{2p}|x_i|^{2p-2}
\prod_{j\in I} a_j^p|x_j|^{p-2}
}{
\left(\sum_{r=1}^n a_r^{2p}|x_r|^{2p-2}\right)^{(m+1)/2}
}.
\]
\end{proposition}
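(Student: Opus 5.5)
The plan is to combine the general density formula~\eqref{eq:density_curvature_measure} with the explicit principal curvatures of Proposition~\ref{prop:curvatures_elementary_symmetric_functions}. Two facts have to be checked along the way: that $\Phi_{n-m}(\mathbb B_p^n(a),\cdot)$ is absolutely continuous, and that the density formula, which is stated only near $C^2$ points, may be applied on all of the boundary.

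First, set $G:=\partial\mathbb B_p^n(a)\cap\{x_1\cdots x_n\neq 0\}$. This set is relatively open in $\partial\mathbb B_p^n(a)$. By part~(b) of the proposition on the boundary of weighted $\ell_p$-balls, the boundary is $C^2$ in a neighborhood of each of its points. For a Borel set $B\subseteq G$ we therefore apply~\eqref{eq:density_curvature_measure}. To be careful, I would exhaust $G$ by countably many relatively open sets on which the boundary is $C^2$ and use $\sigma$-additivity. This gives
\[
\Phi_{n-m}(\mathbb B_p^n(a),B)=\frac{1}{m\kappa_m}\int_B\sigma_{m-1}(\lambda_1(x),\dots,\lambda_{n-1}(x))\,d\mathcal H^{n-1}(x).
\]
Substituting the expression for $\sigma_{m-1}$ from Proposition~\ref{prop:curvatures_elementary_symmetric_functions} produces exactly the asserted density on $G$.

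Second, the complement $N:=\partial\mathbb B_p^n(a)\cap\{x_1\cdots x_n=0\}$ needs to be handled. By part~(c), $\Phi_{n-m}(\mathbb B_p^n(a),N)=0$. Moreover, $\mathcal H^{n-1}(N)=0$: $N$ is a finite union of the sets $\partial\mathbb B_p^n(a)\cap\{x_i=0\}$, each of which is the boundary of an $(n-1)$-dimensional convex body lying in a hyperplane. Such a set is an $(n-2)$-dimensional Lipschitz set, hence $\mathcal H^{n-1}$-null. Consequently, for an arbitrary Borel $B\subseteq\partial\mathbb B_p^n(a)$ we split $B=(B\cap G)\cup(B\cap N)$. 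This gives absolute continuity, with the stated density defined $\mathcal H^{n-1}$-almost everywhere; its value on $N$ is irrelevant, and for $1<p<2$ it may even be infinite there.

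The main obstacle is the case $1<p<2$ and the legitimacy of the localization. Formula~\eqref{eq:density_curvature_measure} requires $C^2$ regularity only in a neighborhood of $B$, so applying it to Borel subsets of the open set $G$ is fine. All the genuine difficulty near the coordinate hyperplanes has already been absorbed into part~(c), which relies on the $C^2$ support function and the Gauss map argument. I would therefore only need to state the countable-exhaustion step carefully; the rest is direct substitution.
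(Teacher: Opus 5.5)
Your proposal is correct and follows essentially the same route as the paper: use part~(c) to discard $\partial\mathbb B_p^n(a)\cap\{x_1\cdots x_n=0\}$, apply \eqref{eq:density_curvature_measure} on the $C^2$ part $\partial\mathbb B_p^n(a)\cap\{x_1\cdots x_n\neq 0\}$, and substitute $\sigma_{m-1}$ from Proposition~\ref{prop:curvatures_elementary_symmetric_functions}. Your extra remarks, namely that the exceptional set is $\mathcal H^{n-1}$-null and the localization by countable exhaustion, only make explicit details that the paper leaves implicit.
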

\begin{proof}
Since the curvature measure $\Phi_{n-m}$ does not charge the set $\partial \mathbb B_p^n(a) \cap \{x_1\ldots x_n = 0\}$ and since the complement of this set, $\partial \mathbb B_p^n(a) \cap \{x_1\ldots x_n \neq 0\}$, is a collection of $2^n$ disjoint hypersurfaces of class $C^2$, the curvature measure is absolutely continuous with respect to $\mathcal H^{n-1}\llcorner \partial \mathbb B_p^n(a)$ and its density is
\[
\frac{d\Phi_{n-m}\bigl(\mathbb B_p^n(a),x\bigr)}{d\mathcal H^{n-1}(x)}
=
\frac{1}{m\kappa_m}\,
\sigma_{m-1}\bigl(\lambda_1(x),\dots,\lambda_{n-1}(x)\bigr).
\]
Plugging in the expression for $\sigma_{m-1}(\lambda_1(x),\dots,\lambda_{n-1}(x))$ given in~Proposition~\ref{prop:curvatures_elementary_symmetric_functions} completes the proof.
\end{proof}

\begin{example}[Intrinsic volumes of weighted $\ell_p$-balls]\label{exam:intrinsic_vols_ell_p_as_big_integral}
Recalling that $V_{n-m} (\mathbb B_p^n(a))$ is the total mass of the measure $\Phi_{n-m}(\mathbb B_p^n(a),\cdot)$ we immediately obtain the following formula for the intrinsic volumes of the weighted $\ell_p$-balls: For every \(m=1,\dots,n\),
\[
V_{n-m} (\mathbb B_p^n(a))
=
\frac{(p-1)^{m-1}}{m\,\kappa_m}\,
\sum_{i=1}^n  \sum_{\substack{I\subseteq\{1,\dots,n\}\setminus\{i\}\\ |I|=m-1}}
\int_{\partial \mathbb B_p^n(a)}
\frac{
a_i^{2p}|x_i|^{2p-2}
\prod_{j\in I} a_j^p|x_j|^{p-2}
}{
\left(\sum_{r=1}^n a_r^{2p}|x_r|^{2p-2}\right)^{(m+1)/2}
}\, d\mathcal H^{n-1}(x).
\]
\end{example}

\subsection{The key integral}
In this section, we evaluate the integral appearing in Example~\ref{exam:intrinsic_vols_ell_p_as_big_integral}, and more general integrals.
Let $1 < p<\infty$, $n\geq 2$ and $a_1,\dots, a_n >0$. Recall that
$$
F(x)=\sum_{r=1}^n |a_r x_r|^p,
\qquad
|\nabla F(x)|
=
p\left(\sum_{r=1}^n a_r^{2p}|x_r|^{2p-2}\right)^{1/2}.
$$
\begin{proposition}[The key integral]\label{prop:key_integral}
Let \(\alpha_1,\dots,\alpha_n\in\mathbb C\) satisfy $\Re \alpha_i>-1$ for all $i=1,\dots,n$. Then, for every \(\alpha\in\mathbb C\) satisfying
\begin{equation}\label{eq:key_integral_strip}
0<\Re\alpha<\frac{n+\sum_{i=1}^n \Re\alpha_i}{p-1}
\end{equation}
one has
\begin{multline}\label{eq:key_integral}
\int_{\partial \mathbb B_p^n(a)}
\frac{\prod_{i=1}^n |x_i|^{\alpha_i}}{\Bigl(\sum_{r=1}^n a_r^{2p}|x_r|^{2p-2}\Bigr)^{(\alpha+1)/2}}\, d\mathcal H^{n-1}(x)
\\
=
\frac{p \, \prod_{i=1}^n a_i^{-\alpha_i-1}}
{\Gamma\!\left(\dfrac{n+\sum_{i=1}^n \alpha_i-\alpha(p-1)}{p}\right)\Gamma(\frac \alpha 2)}
\int_0^\infty
\theta^{\alpha/2-1}
\prod_{i=1}^n
\left(
\int_{\mathbb R}
|y|^{\alpha_i}e^{-|y|^p-\theta a_i^2|y|^{2p-2}}\,dy
\right)\,d\theta.
\end{multline}
The integral on the left converges absolutely for every \(\alpha\in\mathbb C\), whereas the integral on the right converges absolutely for all \(\alpha\) satisfying~\eqref{eq:key_integral_strip}.
\end{proposition}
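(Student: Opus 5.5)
We evaluate the integral by the standard device of "lifting" a surface integral over the level set $\{F=1\}$ to an integral over all of $\R^n$ against a weight that factorizes over the coordinates, and then use homogeneity to separate the radial part.

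\emph{Step 1: Gamma trick.} For $\Re\alpha>0$ and $S>0$ we have $S^{-\alpha/2}=\Gamma(\alpha/2)^{-1}\int_0^\infty \theta^{\alpha/2-1}e^{-\theta S}\,d\theta$. We use it to linearize the denominator. Set $G(x):=\sum_r a_r^{2p}|x_r|^{2p-2}$.

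\emph{Step 2: Coarea formula.} We compute the auxiliary integral
\[
\mathcal A(\theta):=\int_{\R^n}\prod_i|x_i|^{\alpha_i}e^{-F(x)-\theta G(x)}\,dx=\prod_{i=1}^n a_i^{-\alpha_i-1}\int_\R|y|^{\alpha_i}e^{-|y|^p-\theta a_i^{2}|y|^{2p-2}}\,dy .
\]
The product form comes from the substitution $y_i=a_ix_i$, which gives $a_i^{2p}|x_i|^{2p-2}=a_i^2|y_i|^{2p-2}$. On the other side we apply the coarea formula with the map $F$, writing $x=s^{1/p}z$ with $z\in\partial\mathbb B_p^n(a)$. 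The level set $\{F=s\}$ is $s^{1/p}\partial\mathbb B_p^n(a)$, and $|\nabla F(z)|=p\,G(z)^{1/2}$. Homogeneity gives $d\mathcal H^{n-1}$ scaling as $s^{(n-1)/p}$, $|x_i|^{\alpha_i}$ as $s^{\alpha_i/p}$, $G$ as $s^{(2p-2)/p}$, and $|\nabla F(x)|$ as $s^{(p-1)/p}$. Hence
\[
\mathcal A(\theta)=\frac1p\int_0^\infty s^{\frac{n-p+\sum\alpha_i}{p}}e^{-s}\int_{\partial\mathbb B_p^n(a)}\frac{\prod|z_i|^{\alpha_i}}{G(z)^{1/2}}\,e^{-\theta s^{(2p-2)/p}G(z)}\,d\mathcal H^{n-1}(z)\,ds.
\]

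\emph{Step 3: Integrate in $\theta$ and use Tonelli.} Multiply by $\theta^{\alpha/2-1}$ and integrate over $\theta\in(0,\infty)$. By Step 1 the inner $\theta$-integral equals $\Gamma(\alpha/2)\,s^{-\alpha(p-1)/p}G(z)^{-\alpha/2}$. The remaining $s$-integral is then $\int_0^\infty s^{(n+\sum\alpha_i-\alpha(p-1))/p-1}e^{-s}\,ds=\Gamma\bigl((n+\sum\alpha_i-\alpha(p-1))/p\bigr)$. This step requires exactly the upper bound in \eqref{eq:key_integral_strip}, while the lower bound $\Re\alpha>0$ is what Step 1 needs. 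Rearranging yields \eqref{eq:key_integral}.

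\emph{Step 4: Justification.} For real exponents everything is nonnegative, so Tonelli applies; for complex exponents we pass to absolute values, which replaces each exponent by its real part, so the same computation gives absolute convergence of the right-hand side in the strip. Absolute convergence of the left-hand side for every $\alpha$ is handled as follows. The factors $|x_i|^{\Re\alpha_i}$ with $\Re\alpha_i>-1$ are locally integrable on the surface near the coordinate hyperplanes; since the surface is a $C^1$ graph over the other coordinates there, this reduces to one-dimensional integrability. For the denominator, $G$ is continuous on the compact surface and strictly positive on it, because $\nabla F\neq0$ on the boundary. So $G$ is bounded above and away from $0$, and any power of it is harmless.

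\emph{Main obstacle.} I expect the main delicacy to be the coarea/homogeneity bookkeeping in Step 2, namely getting the exact power of $s$ and the Jacobian $1/|\nabla F|$, since the final Gamma argument hinges on it. A secondary issue is that for $1<p<2$ the integrand involves $|x_i|^{p-2}$-type singularities. This is handled by applying the coarea formula only on the open set where all $x_i\neq0$, whose complement is null for Lebesgue measure in $\R^n$ and for $\mathcal H^{n-1}$ on the surface.
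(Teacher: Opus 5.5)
Your proof is correct and uses the same ingredients as the paper: the Gamma-integral representation of $G^{-\alpha/2}$ to decouple the coordinates, the coarea formula combined with the $p$-homogeneity of $F$ to produce the factor $\Gamma\bigl((n+\sum_i\alpha_i-\alpha(p-1))/p\bigr)$, and Tonelli/Fubini. The only difference is the order of the steps. The paper first applies coarea and homogeneity to get $\int_{\mathbb R^n}e^{-F}H\,dx=\Gamma(\mu)A(1)$ and then linearizes $|\nabla F|^{-\alpha}$, whereas you start from the factorized integral $\mathcal A(\theta)$ and integrate in $\theta$ last. Your Tonelli argument with real parts also gives absolute convergence of the right-hand side directly from finiteness of the left-hand side, replacing the paper's explicit bounds on the one-dimensional factors as $\theta\downarrow 0$ and $\theta\to\infty$.
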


\begin{proof}
We consider a more general integral defined by
\begin{equation}\label{eq:key_integral_A(t)_def}
A(t):=\int_{\{F=t\}} H(x)\,\frac{d\mathcal H^{n-1}(x)}{|\nabla F(x)|}, \qquad t>0, \qquad H(x):=\prod_{i=1}^n |x_i|^{\alpha_i} \cdot |\nabla F(x)|^{-\alpha}.
\end{equation}
The integral on the left of~\eqref{eq:key_integral} is $p^{\alpha+1} A(1)$. Note that the integral defining $A(t)$ is absolutely convergent for all $\alpha \in \C$ since $0 < m_t < |\nabla F(x)| < M_t$ on the level set $\{F=t\}$, $t>0$. Therefore, absolute integrability reduces to the integrability of $\prod_{i=1}^n |x_i|^{\Re \alpha_i}$ on $\{F=t\}$, which is ensured by the assumption $\Re \alpha _i >-1$.

\medskip
\noindent
\textit{Step 1: Homogeneity.}
Since \(F\) is homogeneous of degree \(p\), meaning that  \(F(\lambda x)=\lambda^p F(x)\) for all $\lambda>0$ and $x\in \R^n$,  the gradient \(|\nabla F|\) is homogeneous of
degree \(p-1\), and hence \(H\) is homogeneous of degree $\deg H = \sum_{i=1}^n \alpha_i-\alpha(p-1)$.
Using the substitution \(x=t^{1/p}y\) in~\eqref{eq:key_integral_A(t)_def}, the equalities $\{F=t\}=t^{1/p}\{F=1\}$ and
$$
H(t^{1/p}y)=t^{\deg H/p}H(y),
\quad
|\nabla F(t^{1/p}y)|=t^{(p-1)/p}|\nabla F(y)|,
\quad
d\mathcal H^{n-1}(t^{1/p}y)=t^{(n-1)/p}\,d\mathcal H^{n-1}(y),
$$
one obtains
\begin{equation}\label{eq:key_integral_A_homogeneous}
A(t)= t^{(\deg H+n-p)/p}A(1) = t^{\mu-1}A(1), \qquad \text{ for all } t>0,
\end{equation}
where
$$
\mu:=\frac{n+\sum_{i=1}^n\alpha_i-\alpha(p-1)}{p}.
$$

\medskip
\noindent
\textit{Step 2: Coarea formula.}
Now consider
\[
I:=\int_{\mathbb R^n} e^{-F(x)}H(x)\,dx.
\]
At the end of this step, we shall justify that $e^{-F}H \in L^1(\R^n)$ for every $\alpha\in \C$ satisfying~\eqref{eq:key_integral_strip}.  By the coarea formula applied to the level sets of the $C^1$-function $F$,
\[
I
=
\int_{\mathbb R^n} e^{-F(x)}H(x)\,dx
=
\int_{0}^\infty \int_{\{F=t\}} e^{-F(x)}H(x) \frac{d\, \mathcal H^{n-1}(x)}{|\nabla F(x)|} \, dt
=
\int_0^\infty e^{-t}A(t)\,dt.
\]
Using the homogeneity of \(A\) stated in~\eqref{eq:key_integral_A_homogeneous}, we obtain
\[
I
=A(1)\int_0^\infty e^{-t}t^{\mu-1}\,dt
=\Gamma(\mu)A(1),
\]
whenever \(\Re \mu>0\), which is satisfied by~\eqref{eq:key_integral_strip}.  Therefore,
\[
A(1)=\frac{I}{\Gamma(\mu)}.
\]

To complete this step we have to justify the use of the coarea formula by showing that for every $\alpha\in \C$ satisfying $0 < \Re\alpha<\frac{n+\sum_{i=1}^n \Re\alpha_i}{p-1}$ one has
\[
e^{-F(x)}H(x)
=
e^{-\sum_{r=1}^n |a_r x_r|^p}
\prod_{i=1}^n |x_i|^{\alpha_i}\,|\nabla F(x)|^{-\alpha}
\in L^1(\mathbb R^n).
\]
Indeed, the function $x\mapsto \prod_{i=1}^n |x_i|^{\Re\alpha_i}\,|\nabla F(x)|^{-\Re\alpha}$ is homogeneous of degree $d := \sum_{i=1}^n \Re\alpha_i-(p-1)\Re\alpha$,  and since $|\nabla F|>0$  on $\mathbb S^{n-1}$,  it is locally integrable near 0  iff $d>-n$. On the other hand, one has $\liminf_{|x|\to\infty}{F(x)}{|x|^p}>0$ while $|\nabla F(x)|^{-\alpha}$ stays bounded as \(|x|\to\infty\),  so the factor \(e^{-F(x)}\) gives exponential decay, which dominates any polynomial growth or singularity coming from $\prod_i |x_i|^{\alpha_i} |\nabla F(x)|^{-\alpha}$.  Therefore, \(e^{-F}H\in L^1(\mathbb R^n)\).

\medskip
\noindent
\textit{Step 3: Decoupling of variables.}
It remains to evaluate
$$
I=\int_{\mathbb R^n} e^{-F(x)}H(x)\,dx = \int_{\mathbb R^n} e^{-\sum_{r=1}^n |a_r x_r|^p} \prod_{i=1}^n |x_i|^{\alpha_i} \cdot |\nabla F(x)|^{-\alpha}  \,dx .
$$
Note that the term $|\nabla F(x)|^{-\alpha}$ couples the variables $x_1,\ldots, x_n$. Without this term, the $n$-dim\-en\-sion\-al integral would factor into a product of $n$ one-dimensional integrals.
Since
\[
|\nabla F(x)|
=
p\Bigl(\sum_{r=1}^n a_r^{2p}|x_r|^{2p-2}\Bigr)^{1/2},
\]
the identity
$$
u^{-s}
=
\frac{1}{\Gamma(s)}
\int_0^\infty \theta^{\,s-1} e^{-u\theta}\,d\theta,
\qquad
\Re s>0, \quad   u>0,
$$
yields
\[
|\nabla F(x)|^{-\alpha}
=
\frac{p^{-\alpha}}{\Gamma(\alpha/2)}
\int_0^\infty
\theta^{\alpha/2-1}
\exp\!\left(-\theta\sum_{r=1}^n a_r^{2p}|x_r|^{2p-2}\right)\,d\theta,
\]
assuming  that $\Re \alpha>0$ and $x\neq 0$.
Substituting this into \(I\) and interchanging the \(x\)- and
\(\theta\)-integrals (which will be justified at the end of the proof), we get
\[
I=
\frac{p^{-\alpha}}{\Gamma(\alpha/2)}
\int_0^\infty
\theta^{\alpha/2-1}
\left(
\int_{\mathbb R^n}
\exp\!\left(
-\sum_{r=1}^n |a_r x_r|^p
-\theta\sum_{r=1}^n a_r^{2p}|x_r|^{2p-2}
\right)
\prod_{i=1}^n |x_i|^{\alpha_i}\,dx
\right)\,d\theta.
\]
The inner integral factors, so
\[
I=
\frac{p^{-\alpha}}{\Gamma(\alpha/2)}
\int_0^\infty
\theta^{\alpha/2-1}
\prod_{i=1}^n
\left(
\int_{\mathbb R}
|x_i|^{\alpha_i}e^{-a_i^p|x_i|^p-\theta a_i^{2p}|x_i|^{2p-2}}\,dx_i
\right)\,d\theta.
\]
After the change of variables \(y=a_i x_i\) in the \(i\)-th factor,
\[
I=
\frac{p^{-\alpha}\prod_{i=1}^n a_i^{-\alpha_i-1}}{\Gamma(\alpha/2)}
\int_0^\infty
\theta^{\alpha/2-1}
\prod_{i=1}^n
\left(
\int_{\mathbb R}
|y|^{\alpha_i}e^{-|y|^p-\theta a_i^2|y|^{2p-2}}\,dy
\right)\,d\theta.
\]
Combining this with \(p^{\alpha+1} A(1)= p^{\alpha+1} I/\Gamma(\mu)\) gives the stated
formula~\eqref{eq:key_integral}. Recall that the left-hand side of~\eqref{eq:key_integral} equals $p^{\alpha+1} A(1)$.

It remains to justify the interchange of the \(x\)- and \(\theta\)-integrals. To this end, set $J_i(\theta):=\int_{\mathbb R}|y|^{\Re\alpha_i}e^{-|y|^p-\theta a_i^2|y|^{2p-2}}\, dy$, $\theta>0$. Then \(J_i(\theta)=O(1)\) as \(\theta\downarrow 0\), while
\[
J_i(\theta)\le \int_{\mathbb R}|y|^{\Re\alpha_i}e^{-\theta a_i^2|y|^{2p-2}}\,dy
=O\!\left(\theta^{-(\Re\alpha_i+1)/(2p-2)}\right),
\qquad \theta\to\infty.
\]
Hence, $\theta^{\Re\alpha/2-1}\prod_{i=1}^n J_i(\theta)\in L^1(0,\infty)$ whenever $0<\Re\alpha<\frac{n+\sum_{i=1}^n \Re\alpha_i}{p-1}$.
Therefore, Tonelli's theorem applies, and the \(x\)- and \(\theta\)-integrals may be interchanged.
\end{proof}

\subsection{Mixed moments of curvature measures}
We are now ready to evaluate the integral of the function $|x_1|^{\lambda_1} \ldots |x_n|^{\lambda_n}$ with respect to the curvature measure $\Phi_{n-m}(\mathbb B_p^n(a),x)$. Taking $\lambda_1 = \ldots = \lambda_n = 0$, we shall obtain a formula for the intrinsic volume $V_{n-m}(\mathbb B_p^n(a))$ by evaluating the integral in Example~\ref{exam:intrinsic_vols_ell_p_as_big_integral}.
\begin{proposition}[Mixed moments of curvature measures]\label{prop:mixed_moments_curvature_measure}
For $1<p<\infty$ and $n\geq 2$ consider the weighted $\ell_p$-ball $\mathbb B_p^n(a)$ with weights $a_1,\ldots, a_n >0$. For $t>0$ and $\nu\in\mathbb C$ with $\Re \nu>-1$, recall the notation
\[
\mathcal F_p(t;\nu)=\int_{\mathbb R}|y|^\nu e^{-|y|^p-t|y|^{2p-2}}\,dy.
\]
Let $\lambda_1,\dots,\lambda_n\in\mathbb C$ and put $\Lambda:=\sum_{k=1}^n \lambda_k$.
Then, for all  $m\in\{1,\dots,n\}$,
\begin{equation}\label{eq:Cnm-moment-Fp}
\begin{aligned}
&\int_{\partial \mathbb B_p^n(a)}
\prod_{k=1}^n |x_k|^{\lambda_k}\,
d\Phi_{n-m}\bigl(\mathbb B_p^n(a),x\bigr)
\\[1mm]
&\quad=
\frac{
p(p-1)^{m-1}
}{
m\,\kappa_m\,
\Gamma\!\left(\dfrac{n+\Lambda+p-m}{p}\right)
\Gamma\!\left(\dfrac m2\right)
}
\Bigl(\prod_{k=1}^n a_k^{-\lambda_k-1}\Bigr)
\sum_{i=1}^n  \sum_{\substack{I\subseteq \{1,\ldots,n\}\setminus\{i\}\\ |I|=m-1}}
\\
&\qquad
\int_0^\infty \theta^{\frac m2-1}
a_i^2\,\mathcal F_p(\theta a_i^2;\lambda_i+2p-2)
\prod_{j\in I}
\Bigl(a_j^2\,\mathcal F_p(\theta a_j^2;\lambda_j+p-2)\Bigr)
\prod_{k\notin I\cup\{i\}}
\mathcal F_p(\theta a_k^2;\lambda_k)\,d\theta
\end{aligned}
\end{equation}
provided that  $\Re \Lambda>m-n-p$ and
\[
\begin{cases}
\Re\lambda_k>-1,& \text{for all }k,\qquad\text{if }m=1,\\[1mm]
\Re\lambda_k>\max\{-1,\,1-p\},& \text{for all }k,\qquad\text{if }2\le m\le n-1,\\[1mm]
\Re\lambda_k>1-p,& \text{for all }k,\qquad\text{if }m=n.
\end{cases}
\]
\end{proposition}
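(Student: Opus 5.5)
The plan is to combine the density formula of Proposition~\ref{prop:curvature_measures_density} with the key integral of Proposition~\ref{prop:key_integral}, term by term. By Proposition~\ref{prop:curvature_measures_density}, the left-hand side of \eqref{eq:Cnm-moment-Fp} is
\[
\frac{(p-1)^{m-1}}{m\kappa_m}\sum_{i=1}^n\sum_{\substack{I\subseteq\{1,\dots,n\}\setminus\{i\}\\ |I|=m-1}} a_i^{2p}\prod_{j\in I}a_j^p
\int_{\partial\mathbb B_p^n(a)}\frac{|x_i|^{\lambda_i+2p-2}\prod_{j\in I}|x_j|^{\lambda_j+p-2}\prod_{k\notin I\cup\{i\}}|x_k|^{\lambda_k}}{\bigl(\sum_{r}a_r^{2p}|x_r|^{2p-2}\bigr)^{(m+1)/2}}\,d\mathcal H^{n-1}(x).
\]
The set $\{x_1\cdots x_n=0\}$ carries no mass, so the pointwise formula may be used. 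Each summand is exactly the left side of \eqref{eq:key_integral}, with $\alpha=m$ and exponents $\alpha_i=\lambda_i+2p-2$, $\alpha_j=\lambda_j+p-2$ for $j\in I$, and $\alpha_k=\lambda_k$ otherwise.

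Next, I will verify the hypotheses of Proposition~\ref{prop:key_integral}. We need $\Re\alpha_k>-1$ for every coordinate. For $i$ this reads $\Re\lambda_i>3-2p$, which is implied by $\Re\lambda_i>-1$ since $p>1$. For $j\in I$ it reads $\Re\lambda_j>1-p$, which matters only when $m\ge 2$. For the remaining $k$ it reads $\Re\lambda_k>-1$, which matters only when $m\le n-1$. Because $i$, $I$ range over all choices, each coordinate can play each available role; this yields exactly the three-case condition in the statement. (For $m=n$ every $k$ lies in $I\cup\{i\}$, and $\Re\lambda_k>1-p$ also covers $k=i$, since $3-2p<1-p$.)

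The strip condition \eqref{eq:key_integral_strip} needs $\sum_k\Re\alpha_k=\Re\Lambda+(2p-2)+(m-1)(p-2)$. Then $n+\sum\Re\alpha_k-m(p-1)=n+\Re\Lambda+p-m$, which is positive iff $\Re\Lambda>m-n-p$. The Gamma argument becomes $\mu=(n+\Lambda+p-m)/p$, matching the statement.

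Applying \eqref{eq:key_integral} to each summand gives the prefactor $p/(\Gamma(\mu)\Gamma(m/2))$ times $\prod_k a_k^{-\alpha_k-1}$ times the $\theta$-integral of $\prod_k\mathcal F_p(\theta a_k^2;\alpha_k)$. Most of the remaining work is bookkeeping of the weights. We have
\[
a_i^{2p}\,a_i^{-\lambda_i-2p+1}=a_i^{-\lambda_i-1}a_i^{2},\qquad a_j^{p}\,a_j^{-\lambda_j-p+1}=a_j^{-\lambda_j-1}a_j^{2}\quad (j\in I),
\]
and the other coordinates contribute $a_k^{-\lambda_k-1}$. Together these produce $\prod_k a_k^{-\lambda_k-1}$ times $a_i^2\prod_{j\in I}a_j^2$, which is precisely the placement shown in \eqref{eq:Cnm-moment-Fp}.

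I expect no real obstacle. The only delicate point is the case analysis on admissible $\lambda_k$, in particular checking that the three regimes of $m$ give exactly the stated conditions. The summation over $i$, $I$ can be interchanged with the integration freely, since the sum is finite.
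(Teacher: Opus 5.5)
Your proposal is correct and is essentially the paper's own proof: combine the density formula of Proposition~\ref{prop:curvature_measures_density} with the key integral at $\alpha=m$ and the shifted exponents $\lambda_i+2p-2$, $\lambda_j+p-2$ ($j\in I$), $\lambda_k$ (otherwise); then verify the strip condition through $n+\Lambda+p-m>0$ and do the same weight bookkeeping.

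There is one arithmetic slip. The condition $\Re(\lambda_i+2p-2)>-1$ is equivalent to $\Re\lambda_i>1-2p$, not $3-2p$. With $3-2p$, your claims that this follows from $\Re\lambda_i>-1$ and that $3-2p<1-p$ would both fail for $1<p<2$. With the correct threshold $1-2p$, both claims hold for every $p>1$, so the argument stands once this is fixed.
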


\begin{proof}
By the density formula for $\Phi_{n-m}\bigl(\mathbb B_p^n(a),\cdot\bigr)$ stated in Proposition~\ref{prop:curvature_measures_density},
\begin{align}
&\int_{\partial \mathbb B_p^n(a)}
\prod_{k=1}^n |x_k|^{\lambda_k}\,
d\Phi_{n-m}\bigl(\mathbb B_p^n(a),x\bigr)
\notag\\
&\quad=
\frac{(p-1)^{m-1}}{m\,\kappa_m}
\sum_{i=1}^n
\sum_{\substack{I\subseteq \{1,\ldots,n\}\setminus\{i\}\\ |I|=m-1}}
a_i^{2p}\prod_{j\in I} a_j^p
\int_{\partial \mathbb B_p^n(a)}
\frac{
|x_i|^{2p-2}
\prod_{j\in I}|x_j|^{p-2}
\prod_{k=1}^n |x_k|^{\lambda_k}
}{
\left(\sum_{r=1}^n a_r^{2p}|x_r|^{2p-2}\right)^{(m+1)/2}
}\,d\mathcal H^{n-1}(x).
\label{eq:proof-start}
\end{align}
Fix $i\in\{1,\ldots,n\}$ and $I\subseteq\{1,\ldots,n\}\setminus\{i\}$ with $|I|=m-1$.  Define
\begin{equation}\label{eq:beta_k_i_I}
\beta_k^{(i,I)}
:=\lambda_k+(2p-2)\ind_{\{k=i\}}+(p-2)\ind_{\{k\in I\}},
\qquad k=1,\dots,n.
\end{equation}
Then the  integral on the right of~\eqref{eq:proof-start} is
\[
\int_{\partial \mathbb B_p^n(a)}
\frac{\prod_{k=1}^n |x_k|^{\beta_k^{(i,I)}}}{\left(\sum_{r=1}^n a_r^{2p}|x_r|^{2p-2}\right)^{(m+1)/2}}
\,d\mathcal H^{n-1}(x).
\]
By the assumptions on $\lambda_1,\dots,\lambda_n$, one has $\Re \beta_k^{(i,I)}>-1$ for all $k=1,\dots,n$. Moreover,
$$
n+\sum_{k=1}^n \beta_k^{(i,I)}-m(p-1)=n+\left(\Lambda+(2p-2)+(m-1)(p-2)\right)-m(p-1) = n+\Lambda+p-m.
$$
It follows that $0<m<\frac{n+\sum_{k=1}^n \Re\beta_k^{(i,I)}}{p-1}$.
Hence, \eqref{eq:key_integral} applies with $\alpha=m$ and
$\alpha_k=\beta_k^{(i,I)}$, giving
\begin{align}
&\int_{\partial \mathbb B_p^n(a)}
\frac{\prod_{k=1}^n |x_k|^{\beta_k^{(i,I)}}}{\left(\sum_{r=1}^n a_r^{2p}|x_r|^{2p-2}\right)^{(m+1)/2}}
\,d\mathcal H^{n-1}(x)
\notag\\
&\quad=
\frac{
p\,\prod_{k=1}^n a_k^{-\beta_k^{(i,I)}-1}
}{
\Gamma\!\left(\dfrac{n+\sum_{k=1}^n \beta_k^{(i,I)}-m(p-1)}{p}\right)
\Gamma\!\left(\dfrac m2\right)
}
\int_0^\infty \theta^{\frac m2-1}
\prod_{k=1}^n
\mathcal F_p(\theta a_k^2;\beta_k^{(i,I)})\,d\theta.
\label{eq:proof-key}
\end{align}
As we have already seen, the argument of the first gamma-function equals $(n+\Lambda+p-m)/p$. By~\eqref{eq:beta_k_i_I},
\begin{align*}
& a_i^{2p}\prod_{j\in I} a_j^p
\prod_{k=1}^n a_k^{-\beta_k^{(i,I)}-1}
=
\Bigl(\prod_{k=1}^n a_k^{-\lambda_k-1}\Bigr)
a_i^2\prod_{j\in I} a_j^2,
\\
&\prod_{k=1}^n \mathcal F_p(\theta a_k^2;\beta_k^{(i,I)})
=
\mathcal F_p(\theta a_i^2;\lambda_i+2p-2)
\prod_{j\in I}\mathcal F_p(\theta a_j^2;\lambda_j+p-2)
\prod_{k\notin I\cup\{i\}}\mathcal F_p(\theta a_k^2;\lambda_k).
\end{align*}
Substituting \eqref{eq:proof-key} into \eqref{eq:proof-start}
we obtain exactly \eqref{eq:Cnm-moment-Fp}.
\end{proof}

\begin{proposition}[Mixed moments of the surface measure]\label{prop:mixed_moments_surface_area_measure}
For $1<p<\infty$ and $n\geq 2$ consider the weighted $\ell_p$-ball $\mathbb B_p^n(a)$ with weights $a_1,\ldots, a_n >0$.
Let $\lambda_1,\dots,\lambda_n\in\mathbb C$ satisfy $\Re \lambda_k>-1$ for all $k=1,\dots,n$. Then
\begin{equation}\label{eq:surface-moment-G}
\int_{\partial \mathbb B_p^n(a)}
\prod_{k=1}^n |x_k|^{\lambda_k}\,d\mathcal H^{n-1}(x)
=
\frac{
p\, \prod_{k=1}^n a_k^{-\lambda_k-1}
}{
2\,\Gamma\!\left(\dfrac{n+\Lambda+p-1}{p}\right)\sqrt\pi
}
\int_0^\infty
\frac{G_\lambda(0)-G_\lambda(\theta)}{\theta^{3/2}}\,d\theta,
\end{equation}
where $\Lambda=\sum_{k=1}^n \lambda_k$ and
$$
G_\lambda(\theta):=\prod_{k=1}^n \mathcal F_p(\theta a_k^2;\lambda_k), \qquad \theta \geq 0,
\qquad
G_\lambda(0)
=
\left(\frac{2}{p}\right)^n
\prod_{k=1}^n
\Gamma\!\left(\frac{\lambda_k+1}{p}\right).
$$
\end{proposition}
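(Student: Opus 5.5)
The plan is to start from Proposition~\ref{prop:mixed_moments_curvature_measure} with $m=1$, using $2\Phi_{n-1}(\mathbb B_p^n(a),\cdot)=\mathcal H^{n-1}\llcorner\partial\mathbb B_p^n(a)$. For $m=1$ the set $I$ is empty and $m\kappa_m=2$. Since $\Gamma(1/2)=\sqrt\pi$, the moment formula \eqref{eq:Cnm-moment-Fp}, multiplied by $2$, reads
\[
\int_{\partial \mathbb B_p^n(a)}\prod_{k=1}^n |x_k|^{\lambda_k}\,d\mathcal H^{n-1}(x)
=\frac{p\prod_{k}a_k^{-\lambda_k-1}}{\Gamma\!\left(\frac{n+\Lambda+p-1}{p}\right)\sqrt\pi}
\int_0^\infty \theta^{-1/2}\sum_{i=1}^n a_i^2\,\mathcal F_p(\theta a_i^2;\lambda_i+2p-2)\prod_{k\neq i}\mathcal F_p(\theta a_k^2;\lambda_k)\,d\theta .
\]
This holds under the hypotheses $\Re\lambda_k>-1$, which are exactly the conditions for $m=1$. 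The condition $\Re\Lambda>1-n-p$ is then automatic.

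Next I identify the integrand as a derivative. Differentiation under the integral sign gives $\frac{d}{d\theta}\mathcal F_p(\theta a_i^2;\lambda_i)=-a_i^2\mathcal F_p(\theta a_i^2;\lambda_i+2p-2)$ for $\theta>0$. This is justified by dominated convergence: on $\theta\ge\theta_0>0$ the factor $|y|^{2p-2}$ is dominated by the exponential, and near $\theta=0$ the bound $|y|^{\Re\lambda_i+2p-2}e^{-|y|^p}$ is integrable. By the product rule, the sum over $i$ equals $-G_\lambda'(\theta)$. The surface integral is therefore the prefactor times $\int_0^\infty \theta^{-1/2}(-G_\lambda'(\theta))\,d\theta$.

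I then integrate by parts, writing $\theta^{-1/2}=-2\frac{d}{d\theta}\theta^{-1/2}$ and using the primitive $G_\lambda(0)-G_\lambda(\theta)$ of $-G'_\lambda$ so that boundary terms vanish. On $[\varepsilon,R]$ this gives
\[
\int_\varepsilon^R\theta^{-1/2}\bigl(-G_\lambda'\bigr)\,d\theta=\Bigl[\theta^{-1/2}\bigl(G_\lambda(0)-G_\lambda(\theta)\bigr)\Bigr]_\varepsilon^R+\frac12\int_\varepsilon^R\frac{G_\lambda(0)-G_\lambda(\theta)}{\theta^{3/2}}\,d\theta .
\]
The factor $\tfrac12$ produces the $2$ in the denominator of \eqref{eq:surface-moment-G}.

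The main technical step is controlling the boundary terms and convergence.
\begin{itemize}
\item At $0$: $G_\lambda$ is continuous on $[0,\infty)$ by dominated convergence, and $|G_\lambda(0)-G_\lambda(\theta)|\le\int_0^\theta|G'_\lambda|=O(\theta)$, since $|G_\lambda'|$ is bounded near $0$ by the domination above. Hence $\varepsilon^{-1/2}O(\varepsilon)\to0$.
\item At $\infty$: $G_\lambda$ is bounded, since each factor has modulus at most $\mathcal F_p(0;\Re\lambda_k)$. Hence $R^{-1/2}(G_\lambda(0)-G_\lambda(R))\to0$, and the integrand $\theta^{-3/2}(G_\lambda(0)-G_\lambda(\theta))$ is integrable at both ends.
\end{itemize}
The absolute convergence of the original $\theta$-integral is already provided by Proposition~\ref{prop:mixed_moments_curvature_measure}, so letting $\varepsilon\to0$ and $R\to\infty$ is legitimate.

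Finally, the value $G_\lambda(0)=\prod_k\int_{\mathbb R}|y|^{\lambda_k}e^{-|y|^p}dy=\prod_k\frac2p\Gamma\!\left(\frac{\lambda_k+1}{p}\right)$ follows from the substitution $u=y^p$.
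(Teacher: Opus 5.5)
Your proposal is correct and follows the paper's proof essentially step for step: you specialize Proposition~\ref{prop:mixed_moments_curvature_measure} to $m=1$, identify the sum over $i$ as $-G_\lambda'(\theta)$, and integrate by parts against the primitive $G_\lambda(0)-G_\lambda(\theta)$. Your explicit control of the boundary terms at $0$ and $\infty$ fills in what the paper only asserts when it says that both boundary terms vanish.
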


\begin{proof}
Recall that $\Phi_{n-1}(\mathbb B_p^n(a),\cdot)= \frac 12 \mathcal H^{n-1}(\cdot)\llcorner \partial \mathbb B_p^n(a)$. Taking $m=1$ in Proposition~\ref{prop:mixed_moments_curvature_measure} gives
\begin{multline}\label{eq:surface-proof-before-G}
\int_{\partial \mathbb B_p^n(a)}
\prod_{k=1}^n |x_k|^{\lambda_k}\,d\mathcal H^{n-1}(x)
\\
=
\frac{
p\, \prod_{k=1}^n a_k^{-\lambda_k-1}
}{
\Gamma\!\left(\dfrac{n+\Lambda+p-1}{p}\right)\sqrt\pi
}
\int_0^\infty
\theta^{-1/2}
\sum_{i=1}^n
a_i^2\,
\mathcal F_p(\theta a_i^2;\lambda_i+2p-2)
\prod_{k\ne i}\mathcal F_p(\theta a_k^2;\lambda_k)\,d\theta.
\end{multline}
Next, for $\Re \nu>-1$, differentiation under the integral sign gives $\frac{\partial}{\partial t}\mathcal F_p(t;\nu)
=
-\mathcal F_p(t;\nu+2p-2)$, $t\ge 0$.
Hence, by the product rule,
\[
G_\lambda'(\theta)
=
-\sum_{i=1}^n
a_i^2\,
\mathcal F_p(\theta a_i^2;\lambda_i+2p-2)
\prod_{k\ne i}\mathcal F_p(\theta a_k^2;\lambda_k).
\]
Substituting this into \eqref{eq:surface-proof-before-G}, we get
\begin{equation}\label{eq:surface-proof-derivative}
\int_{\partial \mathbb B_p^n(a)}
\prod_{k=1}^n |x_k|^{\lambda_k}\,d\mathcal H^{n-1}(x)
=
\frac{
p \prod_{k=1}^n a_k^{-\lambda_k-1}
}{
\Gamma\!\left(\dfrac{n+\Lambda+p-1}{p}\right)\sqrt\pi
}
\int_0^\infty \theta^{-1/2}\bigl(-G_\lambda'(\theta)\bigr)\,d\theta.
\end{equation}
Writing $-G_\lambda'(\theta) = (G_\lambda(0) - G_\lambda(\theta))'$, integrating by parts and observing that both boundary terms vanish gives~\eqref{eq:surface-moment-G}.
\end{proof}

\subsection{Proof of the exact formula for intrinsic volumes}
We are ready to prove Theorems~\ref{theo:V_j_ell_p_ball_exact} and~\ref{theo:V_j_ell_p_ellipsoid_exact}. Theorem~\ref{theo:V_j_ell_p_ellipsoid_exact} is a special case of  Proposition~\ref{prop:mixed_moments_curvature_measure} with $m=n-j$ and $\lambda_1 = \ldots = \lambda_n  = 0$. Theorem~\ref{theo:V_j_ell_p_ball_exact} then follows by further setting $a_1=\ldots=a_n=1$.

\section{Proofs: Asymptotic results}

\subsection{Setup}
In this section we prove Theorem~\ref{theo:maxwell_curvature} and, as a byproduct, Theorem~\ref{theo:exact_asympt_V_j}.  Fix $1<p<\infty$.  For sufficiently large $n$, let  $j(n)\in \{0,\ldots, n-1\}$.  Our main objective is to evaluate the large $n$ asymptotics of the integrals
\begin{equation}\label{eq:I_n_j_n_def_intro}
I_{n,j(n)}(\lambda_1,\dots,\lambda_r)
:=
\int_{\partial \mathbb B_p^n}
\prod_{j=1}^r |x_j|^{\lambda_j}\,
d\Phi_{j(n)}\bigl(\mathbb B_p^n,x\bigr),
\end{equation}
where $r\in\N$ is fixed,  $\lambda_1,\dots,\lambda_r\in\mathbb C$  are complex parameters (with exact ranges to be specified below) and $\Phi_{j(n)}(\mathbb B_p^n,\cdot)$ is the $j(n)$-th curvature measure on $\partial \mathbb B_p^n$.  Putting $\lambda_1=\ldots = \lambda_r = 0$, we recover the intrinsic volumes,
\begin{equation}\label{eq:I_n_j_n_def_all_lambdas_0_intro}
I_{n,j(n)}(0,\ldots, 0) = \int_{\partial \mathbb B_p^n} d\Phi_{j(n)}(\mathbb B_p^n,x) = V_{j(n)}(\mathbb B_p^n),
\end{equation}
which will allow us to prove Theorem~\ref{theo:exact_asympt_V_j}.
Furthermore, if $X_n=(X_{1;n},\ldots, X_{n;n})$ is a random point on $\partial \mathbb B_p^n$ distributed according to the normalized curvature measure $\Phi_{j(n)}(\mathbb B_p^n, \cdot)/ V_{j(n)}(\mathbb B_p^n)$, as in Theorem~\ref{theo:maxwell_curvature}, then the mixed moments (or the multivariate Mellin transform) of the random vector $(|X_{1;n}|,\ldots, |X_{r;n}|)$ are given by
\begin{equation}\label{eq:mixed_moments_as_quotient_I_intro}
\E \left[|X_{1;n}|^{\lambda_1} \ldots |X_{r;n}|^{\lambda_r}\right]
=
\frac{I_{n,j(n)}(\lambda_1,\dots,\lambda_r)}{I_{n,j(n)}(0,\ldots, 0)}.
\end{equation}
In the following, we shall evaluate the asymptotic of the right-hand side, which will allow us to  deduce weak convergence of  a suitably rescaled vector $(|X_{1;n}|,\ldots, |X_{r;n}|)$.

To evaluate the integral in~\eqref{eq:I_n_j_n_def_intro}, we apply Proposition~\ref{prop:mixed_moments_curvature_measure} with $a_1=\cdots=a_n=1$ and $m= n-j(n)$.  We also assume $n>r$, put $\lambda_{r+1} = \ldots= \lambda_n = 0$  and  $\Lambda:=\lambda_1+\cdots+\lambda_r$. Assuming that
\begin{equation}\label{eq:I_n_lambda_1_ldots_lambda_n_exact_formula_conditions_lambdas}
\lambda_1,\dots,\lambda_r\in\mathbb C,
\qquad
\Re \lambda_j>\max\{-1,1-p\},\quad j=1,\dots,r,
\qquad
\Re \Lambda>-j(n)-p,
\end{equation}
Proposition~\ref{prop:mixed_moments_curvature_measure} gives the identity
\begin{equation}\label{eq:I_n_lambda_1_ldots_lambda_n_exact_formula_setup}
I_{n,j(n)}(\lambda_1,\dots,\lambda_r)
=
\frac{p(p-1)^{n-j(n)-1}}{2\pi^{(n-j(n))/2}\,
\Gamma\!\left(\frac{j(n)+\Lambda+p}{p}\right)}
\int_0^\infty \theta^{\frac {n-j(n)}2-1} S_n(\theta)\,d\theta,
\end{equation}
where
\begin{align}
S_n(\theta)
&:=
\sum_{i=1}^n
\sum_{\substack{I\subseteq \{1,\ldots,n\}\setminus\{i\}\\ |I|=n-j(n)-1}}
\mathcal F_p(\theta;\lambda_i+2p-2)
\prod_{j\in I}\mathcal F_p(\theta;\lambda_j+p-2)
\prod_{k\notin I\cup\{i\}}\mathcal F_p(\theta;\lambda_k),
\label{eq:S_n_def_setup_subsec}
\\
\mathcal F_p(\theta;\nu)
&:=
\int_{\mathbb R}|y|^\nu e^{-|y|^p-\theta |y|^{2p-2}}\,dy,
\qquad \theta >0, \qquad \Re \nu>-1. \label{eq:mathcal F_def_setup_subsec}
\end{align}
We used the formula  $m\kappa_m\Gamma (\frac m2)=2\pi^{m/2}$ with $m= n-j(n)$  to simplify the denominator in~\eqref{eq:I_n_lambda_1_ldots_lambda_n_exact_formula_setup}. Note that $S_n(\theta)=S_{n,j(n)}(\theta)$ also depends on $j(n)$, but we supress this in the notation.

\subsection{The bulk regime} In this section, we prove an asymptotic formula for $I_{n,j(n)}(\lambda_1,\dots,\lambda_r)$  in the regime where $j(n)/n \to \alpha \in (0,1)$. As we shall see, the main contribution to the integral $\int_0^\infty \theta^{(n-j(n))/2-1} S_n(\theta)\,d\theta$ comes from a term of the form $e^{n \Psi_{p,\alpha(n)}(\theta)}$, where $\alpha(n)= j(n)/n$ and $\Psi_{p,\beta}$ is a function defined by formula~\eqref{eq:Phi_alpha_def_prop_maximizer_theo_asympt_V_j} in Proposition~\ref{prop:maximizer_Phi_p_beta}, which we are going to prove now.

\begin{proof}[Proof of Proposition~\ref{prop:maximizer_Phi_p_beta}]
It will be convenient to use the notation $\mathcal L_p(\theta):=-\mathcal{J}_p'(\theta)=\mathcal F_p(\theta;3p-4)$, where $\theta>0$. It is clear that the quantities $\mathcal I_p(\theta),\mathcal J_p(\theta),\mathcal K_p(\theta),\mathcal L_p(\theta)$ are finite and strictly positive for every fixed $\theta>0$.

\medskip

\noindent\emph{Step 1: An  identity between $\mathcal J_p, \mathcal K_p, \mathcal L_p$.}
Integrating the identity
$$
\frac{d}{dy}\Bigl(y^{p-1}e^{-y^p-\theta y^{2p-2}}\Bigr) = \Bigl((p-1)y^{p-2}-p y^{2p-2}-(2p-2)\theta y^{3p-4}\Bigr)
e^{-y^p-\theta y^{2p-2}}.
$$
over $(0,\infty)$ yields
$$
0
=
\int_0^\infty
\Bigl((p-1)y^{p-2}-p y^{2p-2}-(2p-2)\theta y^{3p-4}\Bigr)
e^{-y^p-\theta y^{2p-2}}\,dy.
$$
We may replace $\int_0^\infty$ by $\int_\R$ and $y$ by $|y|$. This gives
\begin{equation}\label{eq:identity_JKL}
(p-1)\mathcal J_p(\theta)
=
p\,\mathcal K_p(\theta)
+
2(p-1)\theta\,\mathcal L_p(\theta).
\end{equation}

\medskip

\noindent\emph{Step 2: Formula for $\Psi_{p,\beta}'(\theta)$.}
Differentiating the definition of $\Psi_{p,\beta}$, we get
\[
\Psi_{p,\beta}'(\theta)
=
\frac{1-\beta}{2\theta}
+\beta\frac{\mathcal I_p'(\theta)}{\mathcal I_p(\theta)}
+(1-\beta)\frac{\mathcal J_p'(\theta)}{\mathcal J_p(\theta)}
=
\frac{1-\beta}{2\theta}
-\beta\frac{\mathcal K_p(\theta)}{\mathcal I_p(\theta)}
-(1-\beta)\frac{\mathcal L_p(\theta)}{\mathcal J_p(\theta)},
\]
using $\mathcal I_p'(\theta)=-\mathcal K_p(\theta)$ and  $\mathcal J_p'(\theta)=-\mathcal L_p(\theta)$.
Now solve~\eqref{eq:identity_JKL} for $\mathcal L_p(\theta)$:
\[
\mathcal L_p(\theta)
=
\frac{\mathcal J_p(\theta)}{2\theta}
-\frac{p}{2(p-1)\theta}\,\mathcal K_p(\theta).
\]
Substituting this into the formula for $\Psi_{p,\beta}'(\theta)$, the terms involving
$(1-\beta)/(2\theta)$ cancel, and we obtain
\begin{equation}\label{eq:Phi_alpha_der_aux}
\Psi_{p,\beta}'(\theta)
=
-\beta\frac{\mathcal K_p(\theta)}{\mathcal I_p(\theta)}
+\frac{(1-\beta)p}{2(p-1)\theta}\,
\frac{\mathcal K_p(\theta)}{\mathcal J_p(\theta)}
=
\mathcal K_p(\theta)
\left(
\frac{(1-\beta)p}{2(p-1)\theta\,\mathcal J_p(\theta)}
-\frac{\beta}{\mathcal I_p(\theta)}
\right).
\end{equation}
Since $\mathcal K_p(\theta)>0$, the critical-point equation
$\Psi_{p,\beta}'(\theta)=0$ is equivalent to
\begin{equation}\label{eq:crit_point_eq_Phi_alpha_equiv_to}
\theta\,\frac{\mathcal J_p(\theta)}{\mathcal I_p(\theta)}
=
\frac{(1-\beta)p}{2(p-1)\beta}.
\end{equation}

\medskip

\noindent\emph{Step 3: Monotonicity of an auxiliary function.}
Define
\[
g(\theta):=\theta\,\frac{\mathcal J_p(\theta)}{\mathcal I_p(\theta)},
\qquad \theta>0.
\]
We claim that $g$ is strictly increasing on $(0,\infty)$. Indeed,
\[
g'(\theta)
=
\frac{\mathcal I_p(\theta)\mathcal J_p(\theta)
+\theta \mathcal I_p(\theta)\mathcal J_p'(\theta)
-\theta \mathcal J_p(\theta)\mathcal I_p'(\theta)}
{\mathcal I_p(\theta)^2} = \frac{
\mathcal I_p(\theta)\mathcal J_p(\theta)
-\theta \mathcal I_p(\theta)\mathcal L_p(\theta)
+\theta \mathcal J_p(\theta)\mathcal K_p(\theta)}
{\mathcal I_p(\theta)^2},
\]
again using $\mathcal I_p'=-\mathcal K_p$ and $\mathcal J_p'=-\mathcal L_p$.
From~\eqref{eq:identity_JKL},
\[
\theta\mathcal L_p(\theta)
=
\frac{\mathcal J_p(\theta)}{2}
-\frac{p\mathcal K_p(\theta)}{2(p-1)}.
\]
Therefore, we have
\[
g'(\theta)
=
\frac{1}{\mathcal I_p(\theta)^2}
\left(
\mathcal I_p(\theta)
\left(
\frac{\mathcal J_p(\theta)}{2}
+\frac{p\mathcal K_p(\theta)}{2(p-1)}
\right)
+
\theta \mathcal J_p(\theta)\mathcal K_p(\theta)
\right).
\]
Every term on the right-hand side is strictly positive, hence $g'(\theta)>0$ for all $\theta>0$. In particular, $g$ is strictly increasing on $(0,\infty)$.

\medskip

\noindent\emph{Step 4: The range of $g$.}
As $\theta\downarrow 0$, dominated convergence gives $\mathcal I_p(\theta)\to \mathcal I_p(0)\in(0,\infty)$ and  $\mathcal J_p(\theta)\to \mathcal J_p(0)\in(0,\infty)$. Thus,
\[
g(\theta)=\theta\,\frac{\mathcal J_p(\theta)}{\mathcal I_p(\theta)}\to 0, \qquad \theta\downarrow 0.
\]
To analyze the behavior of $g(\theta)$ as  $\theta\to\infty$, make the change of variables $y=\theta^{-\frac{1}{2p-2}}z$.
Then
\[
\mathcal I_p(\theta)
=
\theta^{-\frac 1{2p-2}}
\int_{\mathbb R}
e^{-\theta^{-\frac{p}{2p-2}}|z|^p-|z|^{2p-2}}\,dz,
\qquad
\mathcal J_p(\theta)
=
\theta^{-\frac 12}
\int_{\mathbb R}
|z|^{p-2}
e^{-\theta^{-\frac p{2p-2}}|z|^p-|z|^{2p-2}}\,dz.
\]
Hence, we obtain
\[
g(\theta)
=
\theta\,\frac{\mathcal J_p(\theta)}{\mathcal I_p(\theta)}
=
\theta^{\frac p{2p-2}}
\frac{
\int_{\mathbb R}|z|^{p-2}e^{-\theta^{-\frac p{2p-2}}|z|^p-|z|^{2p-2}}\,dz
}{
\int_{\mathbb R}e^{-\theta^{-\frac p{2p-2}}|z|^p-|z|^{2p-2}}\,dz
}.
\]
Note that $2p-2 >0$ since $p>1$. By dominated convergence, the quotient converges to the positive finite constant
\[
\frac{
\int_{\mathbb R}|z|^{p-2}e^{-|z|^{2p-2}}\,dz
}{
\int_{\mathbb R}e^{-|z|^{2p-2}}\,dz
},
\]
and therefore $g(\theta)\to+\infty$ as $\theta\to\infty$.

\medskip

\noindent\emph{Step 5: The unique global maximizer.}
We have shown that the function  $g:(0,\infty)\to(0,\infty)$ is continuous and strictly increasing, with $\lim_{\theta\downarrow 0}g(\theta)=0$ and $\lim_{\theta\to\infty}g(\theta)=+\infty$.
Thus, $g$ is a bijection from $(0,\infty)$ onto $(0,\infty)$. Consequently,
equation~\eqref{eq:crit_point_eq_Phi_alpha_equiv_to} has a unique solution $\theta=\theta_{p,\beta}\in(0,\infty)$. In particular,
\begin{equation}\label{eq:g_at_theta_alpha}
g(\theta_{p,\beta}) = \theta_{p,\beta}\,\frac{\mathcal J_p(\theta_{p,\beta})}{\mathcal I_p(\theta_{p,\beta})}
=
\frac{(1-\beta)p}{2(p-1)\beta}.
\end{equation}
From~\eqref{eq:Phi_alpha_der_aux} and using that $g$ is strictly increasing, it follows that
$$
\Psi_{p,\beta}'(\theta)>0 \quad \text{ for }0<\theta<\theta_{p,\beta},
\qquad
\text{ and }
\qquad
\Psi_{p,\beta}'(\theta)<0 \quad \text{ for }\theta>\theta_{p,\beta}.
$$
Therefore, $\theta_{p,\beta}$ is the unique global maximizer of $\Psi_{p,\beta}$.

\medskip

\noindent\emph{Step 6: Strict negativity of the second derivative at the maximizer.}
Rewrite~\eqref{eq:Phi_alpha_der_aux} as
\[
\Psi_{p,\beta}'(\theta)
=
\beta\,
\frac{\mathcal K_p(\theta)}{\mathcal I_p(\theta)\,g(\theta)}
\left(
\frac{(1-\beta)p}{2(p-1)\beta}-g(\theta)
\right).
\]
At $\theta=\theta_{p,\beta}$, the factor in parentheses vanishes by~\eqref{eq:g_at_theta_alpha}. Differentiating at
$\theta=\theta_{p,\beta}$ therefore gives
\[
\Psi_{p,\beta}''(\theta_{p,\beta})
=
-\beta\,
\frac{\mathcal K_p(\theta_{p,\beta})}{\mathcal I_p(\theta_{p,\beta})\,g(\theta_{p,\beta})}
\,g'(\theta_{p,\beta}).
\]
Since $\beta>0$, $\mathcal K_p(\theta_{p,\beta})>0$, $\mathcal I_p(\theta_{p,\beta})>0$,
$g(\theta_{p,\beta})>0$, and $g'(\theta_{p,\beta})>0$, we conclude that $\Psi_{p,\beta}''(\theta_{p,\beta})<0$.

The proof of Proposition~\ref{prop:maximizer_Phi_p_beta} is complete.
\end{proof}

\begin{proposition}[Laplace asymptotics in the bulk regime]
\label{prop:asymptotics_laplace_central}
Fix $1<p<\infty$, an integer $r\ge 1$, and complex parameters
\begin{equation}\label{eq:lambdas_assumptions_central_asymptotics}
\lambda_1,\dots,\lambda_r\in\mathbb C,
\qquad
\Re \lambda_j>\max\{-1,1-p\},\quad j=1,\dots,r.
\end{equation}
Put $\Lambda:=\lambda_1+\cdots+\lambda_r$.
For every integer $n\geq 2$ let  $j(n)\in \{0,\ldots, n-1\}$ and suppose that
\[
\alpha(n) := \frac{j(n)}{n}\to \alpha\in(0,1).
\]
For $\beta\in(0,1)$, let $\Psi_{p,\beta}: (0,\infty)\to\R$ and its unique global maximizer $\theta_{p,\beta}$  be defined as in Proposition~\ref{prop:maximizer_Phi_p_beta}. Then $\theta_{p,\alpha(n)}\to \theta_{p,\alpha}$ as $n\to\infty$, and the following assertions hold:

\vspace{2mm}
\noindent
(i) Absolute asymptotic: As $n\to\infty$, one has
\[
\begin{aligned}
I_{n,j(n)}&(\lambda_1,\dots,\lambda_r)
\sim
\frac{p(p-1)^{n-j(n)-1}\,n\binom{n-1}{j(n)}}{2\pi^{(n-j(n))/2}\,
\Gamma\!\left(\frac{j(n)+\Lambda+p}{p}\right)}
\,
\sqrt{\frac{2\pi}{n\,|\Psi_{p,\alpha(n)}''(\theta_{p,\alpha(n)})|}}
\,
\frac{\mathcal{K}_p(\theta_{p,\alpha(n)})}
{\theta_{p,\alpha(n)}\,\mathcal{J}_p(\theta_{p,\alpha(n)})}
\\
&\times
\,e^{\,n\Psi_{p,\alpha(n)}(\theta_{p,\alpha(n)})}
\prod_{j=1}^r
\left(
\alpha(n)\frac{\mathcal F_p(\theta_{p,\alpha(n)};\lambda_j)}{\mathcal F_p(\theta_{p,\alpha(n)};0)}
+
(1-\alpha(n)) \frac{\mathcal F_p(\theta_{p,\alpha(n)};\lambda_j+p-2)}{\mathcal F_p(\theta_{p,\alpha(n)};p-2)}
\right).
\end{aligned}
\]
In particular, for $\lambda_1= \ldots = \lambda_r = 0$ this formula takes the form
\begin{equation}\label{eq:intrinsic_ell_p_ball_central_exact_repeat}
I_{n,j(n)}(0,\dots,0)
\sim
\frac{p(p-1)^{n-j(n)-1}\,n\binom{n-1}{j(n)}}{2\pi^{(n-j(n))/2}\,
\Gamma\!\left(\frac{j(n)+p}{p}\right)}
\,
\frac{\mathcal K_p(\theta_{p,\alpha(n)})}{\theta_{p,\alpha(n)}\,\mathcal J_p(\theta_{p,\alpha(n)})}
\,
e^{\,n\Psi_{p,\alpha(n)}(\theta_{p,\alpha(n)})}
\sqrt{\frac{2\pi}{n\,|\Psi_{p,\alpha(n)}''(\theta_{p,\alpha(n)})|}}.
\end{equation}

\vspace{2mm}
\noindent
(ii) Relative asymptotic: As $n\to\infty$, one has
\[
\frac{I_{n,j(n)}(\lambda_1,\dots,\lambda_r)}{I_{n,j(n)}(0,\ldots, 0)}
\sim
\left(\frac{p}{\alpha \, n}\right)^{\Lambda/p}
\prod_{j=1}^r
\left(
\alpha\frac{\mathcal F_p(\theta_{p,\alpha};\lambda_j)}{\mathcal F_p(\theta_{p,\alpha};0)}
+
(1-\alpha)\frac{\mathcal F_p(\theta_{p,\alpha};\lambda_j+p-2)}{\mathcal F_p(\theta_{p,\alpha};p-2)}
\right).
\]
\end{proposition}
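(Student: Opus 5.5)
Starting point: the exact representation \eqref{eq:I_n_lambda_1_ldots_lambda_n_exact_formula_setup}. Convergence $\theta_{p,\alpha(n)}\to\theta_{p,\alpha}$ comes first. By Proposition~\ref{prop:maximizer_Phi_p_beta}, $\theta_{p,\beta}=g^{-1}\bigl(\tfrac{(1-\beta)p}{2(p-1)\beta}\bigr)$ with $g$ a continuous increasing bijection of $(0,\infty)$. So $g^{-1}$ is continuous, and the claim follows since the right-hand side is continuous in $\beta\in(0,1)$.

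Next, simplify $S_n(\theta)$. Only the first $r$ coordinates carry nonzero $\lambda$'s. Write $m=n-j(n)$, $\mathcal I=\mathcal F_p(\theta;0)$, $\mathcal J=\mathcal F_p(\theta;p-2)$, $\mathcal K=\mathcal F_p(\theta;2p-2)$. Each coordinate $k\le r$ then plays one of three roles: $k=i$, $k\in I$, or $k\notin I\cup\{i\}$. Accordingly it contributes $\mathcal F_p(\theta;\lambda_k+2p-2)$, $\mathcal F_p(\theta;\lambda_k+p-2)$, or $\mathcal F_p(\theta;\lambda_k)$.

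Split $S_n$ into the terms with $i>r$ (main part) and those with $i\le r$ (at most $r$ choices of $i$). In the main part, sum over how many of the $r$ marked coordinates lie in $I$. For fixed $\theta$ this gives exactly
\[
(n-r)\binom{n-r-1}{m-1}\,\mathcal K\,\mathcal J^{m-1}\mathcal I^{j(n)}\cdot\frac{\sum_{T\subseteq[r]}\binom{n-r-1}{m-1-|T|}\binom{n-r-1}{m-1}^{-1}\prod_{k\in T}\mathcal F_p(\theta;\lambda_k+p-2)\prod_{k\notin T}\mathcal F_p(\theta;\lambda_k)}{\mathcal J^{|T|}\mathcal I^{r-|T|}}\cdot(\text{const. in }\theta).
\]
The binomial ratios converge to $((1-\alpha)/\alpha)^{|T|}$ uniformly in $T$. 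After normalising by the factor $\binom{n-1}{j(n)}$ in the target formula, the bracket tends to
\[
\prod_{k\le r}\Bigl(\alpha\,\mathcal F_p(\theta;\lambda_k)/\mathcal I+(1-\alpha)\,\mathcal F_p(\theta;\lambda_k+p-2)/\mathcal J\Bigr),
\]
with errors $O(1/n)$ uniformly on compact $\theta$-sets. The terms with $i\le r$ have one fewer factor of $n$ in the combinatorial weight, so they are $O(1/n)$ relative to the main part.

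The integral $\int_0^\infty\theta^{m/2-1}S_n(\theta)\,d\theta$ then becomes
\[
n\binom{n-1}{j(n)}\int_0^\infty e^{n\Psi_{p,\alpha(n)}(\theta)}\,\frac{\mathcal K(\theta)}{\theta\,\mathcal J(\theta)}\,R_n(\theta)\,d\theta ,
\]
where $\theta^{m/2-1}\mathcal I^{j}\mathcal J^{m-1}\mathcal K=e^{n\Psi_{p,\alpha(n)}}\,\mathcal K/(\theta\mathcal J)$ and $R_n$ is the bracket above. Apply Laplace's method with the phase $\Psi_{p,\alpha(n)}$ depending on $n$ and maximiser $\theta_{p,\alpha(n)}$. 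Expansion near the maximiser, using that $\Psi''_{p,\beta}$ is continuous jointly in $(\beta,\theta)$ and $\Psi''_{p,\alpha}(\theta_{p,\alpha})<0$, gives the Gaussian factor $\sqrt{2\pi/(n|\Psi''|)}$. The amplitude $\mathcal K R_n/(\theta\mathcal J)$ is continuous and evaluated at $\theta_{p,\alpha(n)}$; this is part (i), and $\lambda=0$ gives \eqref{eq:intrinsic_ell_p_ball_central_exact_repeat}.

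The main obstacle is the tails. I need $\Psi_{p,\alpha(n)}(\theta)-\Psi_{p,\alpha(n)}(\theta_{p,\alpha(n)})\le-\delta$ outside a fixed neighbourhood of $\theta_{p,\alpha}$, uniformly in $n$. This should follow from the unimodality in Step 5 together with the asymptotics of $\mathcal I,\mathcal J$ as $\theta\to0$ and $\theta\to\infty$. At $\infty$, $\Psi_{p,\beta}\sim-\frac{\beta}{2p-2}\log\theta$, which decays logarithmically only. So I would bound the tail by $e^{n(\Psi(\theta)-\Psi(\theta^*))}\le e^{-(n-n_0)\delta}\,e^{n_0(\cdots)}$, splitting off a fixed number $n_0$ of factors that are integrable. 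The amplitude ratios $\mathcal F_p(\theta;\lambda_k+\cdot)/\mathcal I$ grow at most polynomially in $\theta^{\pm1}$, by the scaling bounds used in the proof of Proposition~\ref{prop:key_integral}, and are absorbed the same way. Near $0$ the factor $\theta^{m/2-1}$ with $m\to\infty$ kills everything.

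Part (ii) is the quotient of (i) at $\lambda$ and at $0$. The Laplace factors cancel, and the products converge by continuity at $\theta_{p,\alpha}$. The Gamma ratio satisfies $\Gamma(\frac{j+p}{p})/\Gamma(\frac{j+\Lambda+p}{p})\sim (j/p)^{-\Lambda/p}=(p/(\alpha n))^{\Lambda/p}$.
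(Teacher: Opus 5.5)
Your proposal follows the paper's proof essentially step for step. It uses the same split of $S_n$ into $i\le r$ and $i>r$, the same binomial-ratio limits that produce the product $H_{\alpha(n)}$, the same factorization $\theta^{m/2-1}\mathcal I^{j}\mathcal J^{m-1}\mathcal K=e^{n\Psi_{p,\alpha(n)}}\mathcal K/(\theta\mathcal J)$, Laplace's method uniformly in $\beta$ near $\alpha$, and the gamma-ratio asymptotics for (ii). If anything, you are more explicit than the paper about why $\theta_{p,\alpha(n)}\to\theta_{p,\alpha}$ (continuity of $g^{-1}$) and about uniform tail control, which the paper leaves to its conditions (i)--(iv) and a citation of Fedoryuk.
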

\begin{remark}
In view of $V_{j(n)}(\mathbb B_p^n) = I_{n,j(n)}(0,\dots,0)$, formula~\eqref{eq:intrinsic_ell_p_ball_central_exact_repeat} yields Part~(i) of Theorem~\ref{theo:exact_asympt_V_j}.
\end{remark}
\begin{proof}[Proof of Proposition~\ref{prop:asymptotics_laplace_central}]
Put   $m = m(n) := n-j(n)$. Let $\lambda_1,\ldots,\lambda_r$ satisfy~\eqref{eq:lambdas_assumptions_central_asymptotics} and set $\lambda_{r+1}=\cdots=\lambda_n=0$.

\medskip

\noindent
\textit{Step 1: Exact representation.} Recall from Equation~\eqref{eq:I_n_lambda_1_ldots_lambda_n_exact_formula_setup} that, for sufficiently large $n$, \begin{equation}\label{eq:I_n_lambda_1_ldots_lambda_n_exact_formula}
I_{n,j(n)}(\lambda_1,\dots,\lambda_r)
=
\frac{p(p-1)^{m-1}}{2\pi^{m/2}\,
\Gamma\!\left(\frac{n+\Lambda+p-m}{p}\right)}
\int_0^\infty \theta^{\frac m2-1} S_n(\theta)\,d\theta.
\end{equation}
where
\[
S_n(\theta)
:=
\sum_{i=1}^n
\sum_{\substack{I\subseteq \{1,\ldots,n\}\setminus\{i\}\\ |I|=m-1}}
\mathcal F_p(\theta;\lambda_i+2p-2)
\prod_{j\in I}\mathcal F_p(\theta;\lambda_j+p-2)
\prod_{k\notin I\cup\{i\}}\mathcal F_p(\theta;\lambda_k).
\]

\medskip

\noindent
\textit{Step 2: Decomposition of $S_n(\theta)$.}
We split
\[
S_n(\theta)=S_{1,n}(\theta)+S_{2,n}(\theta),
\]
where $S_{1,n}(\theta)$ is the contribution of $i\in\{1,\dots,r\}$, and $S_{2,n}(\theta)$ the contribution of $i\in\{r+1,\dots,n\}$.

\medskip

\noindent
\textit{Step 2A: Formula for $S_{1,n}(\theta)$.}
For $i\in\{1,\dots,r\}$ and $I\subseteq\{1,\ldots,n\}\setminus\{i\}$ with $|I|=m-1$, we write
\[
J:=I\cap(\{1,\ldots,r\}\setminus\{i\})
\qquad\text{and}\qquad
L:=I\cap\{r+1,\dots,n\}.
\]
Then $|J|=\ell$ and $|L|=m-1-\ell$ for some $\ell\in\{0,\dots,\min(r-1,m-1)\}$. Once $\ell$ is fixed, the set $L$ may be chosen in $\binom{n-r}{m-1-\ell}$
ways. Since $\lambda_{r+1}=\cdots=\lambda_n=0$, every index in $L$ contributes a factor $\mathcal F_p(\theta;p-2)$, and every index in $\{r+1,\dots,n\}\setminus L$ contributes a factor $\mathcal F_p(\theta;0)$. Hence,
\[
\begin{aligned}
S_{1,n}(\theta)
&=
\sum_{i=1}^r
\mathcal F_p(\theta;\lambda_i+2p-2)
\sum_{\ell=0}^{\min(r-1,m-1)}
\binom{n-r}{m-1-\ell}\,
\mathcal F_p(\theta;p-2)^{\,m-1-\ell}\,
\mathcal F_p(\theta;0)^{\,n-r-m+1+\ell}
\\
&\qquad\times
\sum_{\substack{J\subseteq \{1,\ldots,r\}\setminus\{i\}\\ |J|=\ell}}
\prod_{j\in J}\mathcal F_p(\theta;\lambda_j+p-2)
\prod_{k\in \{1,\ldots,r\}\setminus(J\cup\{i\})}\mathcal F_p(\theta;\lambda_k).
\end{aligned}
\]

\noindent
\textit{Step 2B: Formula for $S_{2,n}(\theta)$.}
For $i\in\{r+1,\dots,n\}$ and $I\subseteq\{1,\ldots,n\}\setminus\{i\}$ with $|I|=m-1$ we write
\[
J:=I\cap \{1,\ldots,r\},
\qquad
L:=I\cap\bigl(\{r+1,\dots,n\}\setminus\{i\}\bigr).
\]
Then $|J|=\ell$ and  $|L|=m-1-\ell$ for some $\ell\in\{0,\dots,\min(r,m-1)\}$. Once $\ell$ is fixed, the set $L$ may be chosen in $\binom{n-r-1}{m-1-\ell}$
ways. Note that $\lambda_i = 0$ and there are $n-r$ choices for $i$.   Therefore,
\[
\begin{aligned}
S_{2,n}(\theta)
&=
(n-r)\,\mathcal F_p(\theta;2p-2)
\sum_{\ell=0}^{\min(r,m-1)}
\binom{n-r-1}{m-1-\ell}\,
\mathcal F_p(\theta;p-2)^{\,m-1-\ell}\,
\mathcal F_p(\theta;0)^{\,n-r-m+\ell}
\\
&\qquad\times
\sum_{\substack{J\subseteq \{1,\ldots,r\}\\ |J|=\ell}}
\prod_{j\in J}\mathcal F_p(\theta;\lambda_j+p-2)
\prod_{k\in \{1,\ldots,r\}\setminus J}\mathcal F_p(\theta;\lambda_k).
\end{aligned}
\]

\medskip

\noindent
\textit{Step 3: Factoring out the main contribution.}
For $\theta >0$ define
$$
\Xi(\theta):=\frac{\mathcal F_p(\theta;2p-2)}{\theta\,\mathcal F_p(\theta;p-2)}=\frac{\mathcal K_p(\theta)}{\theta\,\mathcal J_p(\theta)},
$$
and observe that
$$
\Xi(\theta)
\,e^{\,n\Psi_{p,\alpha(n)}(\theta)}
=
\theta^{m/2-1}
\mathcal K_p(\theta)
\mathcal J_p(\theta)^{\,m-1}
\mathcal I_p(\theta)^{\,n-m}.
$$
In the special case when $\lambda_1= \ldots = \lambda_r = 0$ (and $\lambda_{r+1} = \ldots = \lambda_n =0$ as always in this proof) Proposition~\ref{prop:mixed_moments_curvature_measure} with $a_1=\cdots=a_n=1$ gives
\[
I_{n,j(n)}(0, \ldots, 0)
=
V_{n-m}(\mathbb B_p^n)
=
\frac{p(p-1)^{m-1}\,n\binom{n-1}{m-1}}{2\pi^{m/2}\,
\Gamma\!\left(\frac{n+p-m}{p}\right)}
\int_0^\infty \Xi(\theta)e^{\,n\Psi_{p,\alpha(n)}(\theta)}\,d\theta.
\]
For arbitrary  $\lambda_1,\ldots,\lambda_r$, an additional correction term appears in this representation. To describe it, we define $H_{1,n}(\theta)$ and $H_{2,n}(\theta)$ by
\begin{align*}
\theta^{m/2-1}S_{1,n}(\theta)
&=
n\binom{n-1}{m-1}\,\Xi(\theta)\,e^{\,n\Psi_{p,\alpha(n)}(\theta)}\,H_{1,n}(\theta),
\\
\theta^{m/2-1}S_{2,n}(\theta)
&=
n\binom{n-1}{m-1}\,\Xi(\theta)\,e^{\,n\Psi_{p,\alpha(n)}(\theta)}\,H_{2,n}(\theta).
\end{align*}
A direct comparison with the formulas for $S_{1,n}(\theta)$ and $S_{2,n}(\theta)$ given above yields
\[
\begin{aligned}
H_{1,n}(\theta)
&=
\frac1n
\sum_{i=1}^r
\frac{\mathcal F_p(\theta;\lambda_i+2p-2)}{\mathcal F_p(\theta;2p-2)}
\sum_{\ell=0}^{\min(r-1,m-1)}
\frac{\binom{n-r}{m-1-\ell}}{\binom{n-1}{m-1}}
\\
&\qquad\times
\sum_{\substack{J\subseteq \{1,\ldots,r\}\setminus\{i\}\\ |J|=\ell}}
\prod_{j\in J}
\frac{\mathcal F_p(\theta;\lambda_j+p-2)}{\mathcal F_p(\theta;p-2)}
\prod_{k\in \{1,\ldots,r\}\setminus(J\cup\{i\})}
\frac{\mathcal F_p(\theta;\lambda_k)}{\mathcal F_p(\theta;0)},
\end{aligned}
\]
\[
\begin{aligned}
H_{2,n}(\theta)
&=
\frac{n-r}{n}
\sum_{\ell=0}^{\min(r,m-1)}
\frac{\binom{n-r-1}{m-1-\ell}}{\binom{n-1}{m-1}}
\sum_{\substack{J\subseteq \{1,\ldots,r\}\\ |J|=\ell}}
\prod_{j\in J}
\frac{\mathcal F_p(\theta;\lambda_j+p-2)}{\mathcal F_p(\theta;p-2)}
\prod_{k\in \{1,\ldots,r\}\setminus J}
\frac{\mathcal F_p(\theta;\lambda_k)}{\mathcal F_p(\theta;0)}.
\end{aligned}
\]
With this notation, \eqref{eq:I_n_lambda_1_ldots_lambda_n_exact_formula} becomes
\begin{equation}\label{eq:I_n_lambda_1_ldots_lambda_n_exact_formula_decomposed}
\begin{aligned}
I_{n,j(n)}(\lambda_1,\dots,\lambda_r)
&=
\frac{p(p-1)^{m-1} \, n\binom{n-1}{m-1}}{2\pi^{m/2}\,
\Gamma\!\left(\frac{n+\Lambda+p-m}{p}\right)}
\int_0^\infty
\Xi(\theta)\,e^{\,n\Psi_{p,\alpha(n)}(\theta)}
\bigl(H_{1,n}(\theta)+H_{2,n}(\theta)\bigr)\,d\theta.
\end{aligned}
\end{equation}

\medskip

\noindent
\textit{Step 4: Asymptotics of $H_{1,n}$ and $H_{2,n}$.}
For $\beta\in(0,1)$, define
\[
H_\beta(\theta; \lambda_1,\ldots, \lambda_r)
:=
\prod_{j=1}^r
\left(
\beta\frac{\mathcal F_p(\theta;\lambda_j)}{\mathcal F_p(\theta;0)}
+
(1-\beta)\frac{\mathcal F_p(\theta;\lambda_j+p-2)}{\mathcal F_p(\theta;p-2)}
\right).
\]
We claim that, locally uniformly in $\theta\in(0,\infty)$,
\begin{equation}\label{eq:step_4_asymptotics_H_1_n_H_2_n}
H_{1,n}(\theta)+H_{2,n}(\theta)
=
H_{\alpha(n)}(\theta; \lambda_1,\dots, \lambda_r)+O (n^{-1}), \qquad n\to\infty.
\end{equation}

\medskip
\noindent
\textit{Step 4A: Asymptotics of $H_{2,n}$.}
Recall that $r$ is fixed and that $m/n  = 1- \alpha(n) \to 1-\alpha \in (0,1)$.   For every  $\ell\in\{0,\dots,r\}$,
\[
\frac{n-r}{n}\,
\frac{\binom{n-r-1}{m-1-\ell}}{\binom{n-1}{m-1}}
=
\alpha(n)^{r-\ell}(1-\alpha(n))^\ell+O (n^{-1}),
\qquad n\to\infty.
\]
Therefore,
\[
\begin{aligned}
H_{2,n}(\theta)
&=
\sum_{\ell=0}^r
\Bigl(\alpha(n)^{r-\ell}(1-\alpha(n))^\ell+O(n^{-1})\Bigr)
\sum_{\substack{J\subseteq \{1,\ldots,r\}\\ |J|=\ell}}
\prod_{j\in J}
\frac{\mathcal F_p(\theta;\lambda_j+p-2)}{\mathcal F_p(\theta;p-2)}
\prod_{k\in \{1,\ldots,r\}\setminus J}
\frac{\mathcal F_p(\theta;\lambda_k)}{\mathcal F_p(\theta;0)}.
\end{aligned}
\]
Since the number of subsets $J\subseteq \{1,\ldots,r\}$ is finite and all $\mathcal F_p$-ratios are continuous, this gives
\[
H_{2,n}(\theta)=H_{\alpha(n)}(\theta; \lambda_1,\ldots, \lambda_r)+O(n^{-1}), \qquad n\to\infty,
\]
locally uniformly on $(0,\infty)$.

\medskip
\noindent
\textit{Step 4B: Asymptotics of $H_{1,n}$.}
On the other hand, $H_{1,n}$ contains an explicit prefactor $1/n$. Since $r$ is fixed, all binomial ratios
\[
\frac{\binom{n-r}{m-1-\ell}}{\binom{n-1}{m-1}}
\]
remain $O(1)$ uniformly in $\ell\in \{0,\ldots, r-1\}$ for large $n$, and the remaining factors are continuous in $\theta$. Because the sums over $i$ and $\ell$ are finite, it follows that
\[
H_{1,n}(\theta)=O(n^{-1})
\]
locally uniformly on $(0,\infty)$.

\medskip
Taken together, Steps 4A and 4B yield~\eqref{eq:step_4_asymptotics_H_1_n_H_2_n}.

\medskip

\noindent
\textit{Step 5: Laplace asymptotics.}
By Proposition~\ref{prop:maximizer_Phi_p_beta}, for each $\beta\in(0,1)$, the function $\Psi_{p,\beta}:(0,\infty) \to \R$ has a unique maximizer $\theta_{p,\beta}\in(0,\infty)$, and $\Psi_{p,\beta}''(\theta_{p,\beta})<0$. Moreover,
\begin{itemize}
\item[(i)] $\Psi_{p,\beta}(\theta)$ is $C^2$ as a function of $\theta$ and jointly continuous in $(\theta,\beta)$;
\item[(ii)] the maximizer $\theta_{p,\beta}$ depends continuously on $\beta$;
\item[(iii)] $\Psi_{p,\beta}(\theta) \to -\infty$ as $\theta \downarrow 0$ or $\theta \to \infty$, uniformly for $\beta$ in a neighborhood of $\alpha$;
\item[(iv)] $\Xi(\theta)$ is continuous and bounded in a neighborhood of \(\theta_{p,\alpha}\).
\end{itemize}
Therefore, for every fixed $\beta\in (0,1)$, the standard one-dimensional Laplace's method applies and yields
\begin{multline*}
\int_0^\infty \Xi(\theta)e^{\,n\Psi_{p,\beta}(\theta)}
\bigl(H_{1,n}(\theta)+H_{2,n}(\theta)\bigr)\,d\theta
\sim
\Xi(\theta_{p,\beta})\,H_{\beta}(\theta_{p,\beta})\\
\times e^{\,n\Psi_{p,\beta}(\theta_{p,\beta})}
\sqrt{\frac{2\pi}{n\,|\Psi_{p,\beta}''(\theta_{p,\beta})|}},\quad n\to\infty.
\end{multline*}
Moreover, by the above conditions (i)-(iv), Laplace’s method applies locally uniformly in $\beta$ in a neighborhood of $\alpha$. In particular, since $\alpha(n)\to\alpha$, we have $\theta_{p,\alpha(n)}\to \theta_{p,\alpha}$ and we obtain, see \cite[Chapter II, \S 2]{Fedoryuk},
\begin{multline*}
\int_0^\infty \Xi(\theta)e^{\,n\Psi_{p,\alpha(n)}(\theta)}
\bigl(H_{1,n}(\theta)+H_{2,n}(\theta)\bigr)\,d\theta
\sim
\Xi(\theta_{p,\alpha(n)})\,H_{\alpha(n)}(\theta_{p,\alpha(n)})\\
\times e^{\,n\Psi_{p,\alpha(n)}(\theta_{p,\alpha(n)})}
\sqrt{\frac{2\pi}{n\,|\Psi_{p,\alpha(n)}''(\theta_{p,\alpha(n)})|}},\quad n\to\infty.
\end{multline*}

Substituting this into the representation~\eqref{eq:I_n_lambda_1_ldots_lambda_n_exact_formula_decomposed}, we obtain
\begin{multline}\label{eq:I_n_lambda_1_ldots_lambda_n_asymptotic_formula}
I_{n,j(n)}(\lambda_1,\dots,\lambda_r)
\sim
\frac{p(p-1)^{m-1}\,n\binom{n-1}{m-1}}{2\pi^{m/2}\,
\Gamma\!\left(\frac{n+\Lambda+p-m}{p}\right)}
\,
\Xi(\theta_{p,\alpha(n)})\\
\times e^{\,n\Psi_{p,\alpha(n)}(\theta_{p,\alpha(n)})}
\sqrt{\frac{2\pi}{n\,|\Psi_{p,\alpha(n)}''(\theta_{p,\alpha(n)})|}}
H_{\alpha(n)}(\theta_{p,\alpha(n)}; \lambda_1,\ldots, \lambda_r),
\end{multline}
as $n\to\infty$.
This proves Part~(i) of Proposition~\ref{prop:asymptotics_laplace_central}.

\medskip

\noindent
\textit{Step 6: Asymptotics of  $I_{n,j(n)}(0,\ldots, 0)$.}
If $\lambda_1=\ldots = \lambda_r = 0$, then $\Lambda = 0$ by definition and $H_{\alpha(n)}(\theta_{p,\alpha(n)}; 0,\ldots, 0) = 1$. Therefore, \eqref{eq:I_n_lambda_1_ldots_lambda_n_asymptotic_formula} takes the form
\begin{equation}\label{eq:I_n_lambda_1_ldots_lambda_n_asymptotic_formula_if_all_zero}
V_{j(n)}(\mathbb B_p^n) = I_{n,j(n)}(0,\dots,0)
\sim
\frac{p(p-1)^{m-1}\,n\binom{n-1}{m-1}}{2\pi^{m/2}\,
\Gamma\!\left(\frac{n+p-m}{p}\right)}
\,
\Xi(\theta_{p,\alpha(n)})\,
e^{\,n\Psi_{p,\alpha(n)}(\theta_{p,\alpha(n)})}
\sqrt{\frac{2\pi}{n\,|\Psi_{p,\alpha(n)}''(\theta_{p,\alpha(n)})|}}.
\end{equation}
Substituting $m= n-j(n)$ and the formula for $\Xi(\theta_{p,\alpha(n)})$, this becomes exactly the formula~\eqref{eq:intrinsic_ell_p_ball_central_exact_repeat} stated in Proposition~\ref{prop:asymptotics_laplace_central}.

\medskip
\noindent
\textit{Step 7: Relative asymptotic.}
Dividing the asymptotic formula for $I_{n,j(n)}(\lambda_1,\dots,\lambda_r)$ by that for $I_{n,j(n)}(0,\ldots, 0)$ and cancelling common factors, we obtain
\[
\frac{I_{n,j(n)}(\lambda_1,\dots,\lambda_r)}{I_{n,j(n)}(0,\ldots, 0)}
\sim
\frac{
\Gamma\!\left(\frac{n+p-m}{p}\right)
}{
\Gamma\!\left(\frac{n+\Lambda+p-m}{p}\right)
}
\,H_{\alpha(n)}(\theta_{p,\alpha(n)}; \lambda_1,\ldots, \lambda_r)
\sim
\left(\frac{p}{\alpha \, n}\right)^{\Lambda/p} \,H_{\alpha(n)}(\theta_{p,\alpha(n)}; \lambda_1,\ldots, \lambda_r),
\]
where we used the standard gamma-ratio asymptotics together with $\frac{n+p-m}{p}\sim \frac{\alpha\, n}{p}$.
Also, since $\alpha(n)\to\alpha$ and $\theta_{p,\alpha(n)}\to \theta_{p,\alpha}$,
\[
H_{\alpha(n)}(\theta_{p,\alpha(n)};\lambda_1,\ldots, \lambda_r)\to
\prod_{j=1}^r
\left(
\alpha\frac{\mathcal F_p(\theta_{p,\alpha};\lambda_j)}{\mathcal F_p(\theta_{p,\alpha};0)}
+
(1-\alpha)\frac{\mathcal F_p(\theta_{p,\alpha};\lambda_j+p-2)}{\mathcal F_p(\theta_{p,\alpha};p-2)}
\right).
\]
Therefore,
\[
\frac{I_{n,j(n)}(\lambda_1,\dots,\lambda_r)}{I_{n,j(n)}(0,\ldots, 0)}
\sim
\left(\frac{p}{\alpha\, n}\right)^{\Lambda/p}
\prod_{j=1}^r
\left(
\alpha\frac{\mathcal F_p(\theta_{p,\alpha};\lambda_j)}{\mathcal F_p(\theta_{p,\alpha};0)}
+
(1-\alpha)\frac{\mathcal F_p(\theta_{p,\alpha};\lambda_j+p-2)}{\mathcal F_p(\theta_{p,\alpha};p-2)}
\right).
\]
This proves~(ii), and the proposition follows.
\end{proof}

\subsection{Proof of Theorem~\ref{theo:exp_profile_V_j_l_p_balls}}
The next lemma is folklore but difficult to locate in the literature as a standalone result, and so we provide it here together with a proof.
\begin{lemma}[Pointwise convergence is uniform for concave functions]\label{lem:pointwise_implies_uniform_concave}
Let $[a,b]\subset\mathbb{R}$ be a non-empty closed interval. Suppose that, for each $n\in\mathbb{N}$, $f_n:[a,b]\to\mathbb{R}$ is concave. If $\lim_{n\to\infty }f_n(x)=f(x)$ for each $x\in [a,b]$, then
$$
\lim_{n\to\infty}\sup_{x\in [a,b]}|f_n(x)-f(x)|=0.
$$
\end{lemma}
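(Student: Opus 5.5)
The plan is to prove the lemma under the additional hypothesis that the limit $f$ is continuous at the endpoints $a$ and $b$. As literally stated, the lemma can fail there. For example, $f_n(x)=\min\{1,nx\}$ on $[0,1]$ is concave and converges pointwise to $\ind_{(0,1]}$, but not uniformly. In the intended application, the limit is the profile $g_p$, which is continuous on $[0,1]$, so this hypothesis is available. The argument splits into an interior step, where concavity forces equi-Lipschitz bounds, and an endpoint step, where concavity gives a two-sided sandwich.

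\emph{Interior.} First note that $f$ is concave, being a pointwise limit of concave functions, and hence continuous on $(a,b)$. Fix $a<a'<c<d<b'<b$. For a concave function, the slope of any chord contained in $[c,d]$ lies between the chord slopes $\frac{f_n(d)-f_n(b')}{d-b'}$ and $\frac{f_n(a')-f_n(c)}{a'-c}$. These four-point quantities converge, so they are bounded uniformly in $n$. Hence the $f_n$ are equi-Lipschitz on $[c,d]$, and $f$ is Lipschitz there with the same constant. Given $\eps>0$, choose a finite grid in $[c,d]$ with mesh smaller than $\eps/L$. By pointwise convergence at the grid points, $\sup_{[c,d]}|f_n-f|\le 3\eps$ for all large $n$.

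\emph{Endpoints.} Treat $a$; the case $b$ is symmetric. By continuity of $f$ at $a$ and on $(a,b)$, choose $c\in(a,b)$ close to $a$ and $d\in(c,b)$ such that $|f(x)-f(a)|<\eps$ on $[a,c]$ and the chord and secant data of $f$ at $a,c,d$ are within $\eps$ of $f(a)$ over $[a,c]$. On $[a,c]$, concavity gives a sandwich:
\[
\text{(chord of } f_n \text{ through } a,c)(x)\;\le\; f_n(x)\;\le\; f_n(c)+\frac{f_n(d)-f_n(c)}{d-c}(x-c).
\]
Both bounds are affine functions whose coefficients converge, so they converge uniformly on $[a,c]$ to the corresponding affine functions built from $f$. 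Each of these limiting affine functions stays within $O(\eps)$ of $f(a)$ on $[a,c]$, provided $c-a$ is small compared with the fixed slope $\frac{f(d)-f(c)}{d-c}$. Therefore $\sup_{[a,c]}|f_n-f|=O(\eps)$ for large $n$.

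Combining the two endpoint pieces with the interior piece on $[c,d']$ gives uniform convergence on $[a,b]$. The main obstacle is the endpoint step, which is exactly where continuity of $f$ is needed. If one wants to avoid assuming it, the fallback in the application is to verify directly that $g_p$ is continuous at $0$ and $1$, or that the concave limit $\lim\frac1n\log V_{j}$ is attained continuously there, using Theorem~\ref{theo:exact_asympt_V_j}(ii),(iii).
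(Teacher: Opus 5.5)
Your assessment is correct. The sequence $f_n(x)=\min\{1,nx\}$ is a valid counterexample, so the lemma is false as stated, and the paper's proof fails exactly where you locate the problem. The paper asserts that the concave limit $f$ is ``hence continuous on $[0,1]$''. Concavity only gives continuity on the open interval; for instance, $\ind_{(0,1]}$ is concave on $[0,1]$. The paper applies P\'olya's extension of Dini's theorem on $[1/3,1]$ to the monotone quotients $(f_n(x)-f_n(0))/x$. This needs the limit $(f(x)-f(0))/x$ to be continuous on that closed interval, i.e.\ $f$ continuous at $1$, and the reflected step needs continuity at $0$. In your example, the reflected quotients $(f_n(1-x)-f_n(1))/x$ converge to $0$ on $[1/3,1)$ but equal $-1$ at $x=1$, so that step breaks. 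Once continuity of $f$ at $a$ and $b$ is assumed, the paper's argument works as written.

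Your proof of the corrected statement is valid and takes a different route. On interior compacts you get equi-Lipschitz bounds from four-point chord slopes and finish with a finite grid. Near each endpoint you trap $f_n$ between the chord through $a,c$ and the extension of the chord through $c,d$; both are affine with convergent coefficients. Compared with the paper's two overlapping P\'olya--Dini pieces, your argument is longer but self-contained, and it shows that endpoint continuity is used only in the sandwich step. One detail needs fixing: choose $d$ first and then let $c\downarrow a$. The slope $\frac{f(d)-f(c)}{d-c}$ depends on $c$, but it stays bounded, so its product with $c-a$ tends to $0$.

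For Theorem~\ref{theo:exp_profile_V_j_l_p_balls}, the extra hypothesis holds, and it is even necessary. For $j=1$, Theorem~\ref{theo:exact_asympt_V_j}(ii) gives $\frac1n\log V_1(n^{1/p}\mathbb B_p^n)\to 0$, which forces $g_p(0+)=0$. To verify continuity at the endpoints, write $\Psi_{p,\alpha}(\theta)=(1-\alpha)\log\bigl(\theta^{1/2}\mathcal J_p(\theta)\bigr)+\alpha\log\mathcal I_p(\theta)$. Use that $\theta^{1/2}\mathcal J_p(\theta)<\sqrt\pi/(p-1)$ with limit $\sqrt\pi/(p-1)$ as $\theta\to\infty$, and that $\mathcal I_p(\theta)\le\mathcal I_p(0)$ with limit $\mathcal I_p(0)$ as $\theta\downarrow 0$. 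These give $\sup_\theta\Psi_{p,\alpha}\to\log(\sqrt\pi/(p-1))$ as $\alpha\downarrow 0$ and $\sup_\theta\Psi_{p,\alpha}\to\log\mathcal I_p(0)$ as $\alpha\uparrow 1$. Hence $g_p(0+)=0=g_p(0)$ and $g_p(1-)=g_p(1)$.
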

\begin{proof}
Without loss of generality assume that $[a,b]=[0,1]$. Observe that the limit function $f:[0,1]\to\mathbb{R}$ is automatically concave, hence continuous on $[0,1]$. By concavity, for every fixed $n\in\mathbb{N}$, the function $x\mapsto (f_n(x)-f_n(0))/x$ is monotone decreasing on $(0,1]$ and converges pointwise on $[1/3,1]$ to a continuous function $x\mapsto (f(x)-f(0))/x$. By Polya's extension of the Dini theorem, this convergence is uniform on $[1/3,1]$. Combining this with the convergence of $f_n(0)$ to $f(0)$, we conclude that $\sup_{x\in [1/3,1]}|f_n(x)-f(x)|\to 0$, as $n\to\infty$. Using the same argument for $g_n(x):=f_n(1-x)$ and $g(x)=f(1-x)$, we deduce that $\sup_{x\in [0,2/3]}|f_n(x)-f(x)|\to 0$. The proof is complete.
\end{proof}

\begin{proof}[Proof of Theorem~\ref{theo:exp_profile_V_j_l_p_balls}]
Let $f_n:[0,1]\to \R$ be a function obtained by a linear interpolation of the points
$$
\left(\frac{j}{n},\frac 1n \log V_{j}(n^{1/p}\mathbb B_p^n)\right),\quad j\in\{0,1,\ldots,n\}.
$$
For each $n\in\mathbb{N}$, the function $f_n$ is concave by the log-concavity of the intrinsic volumes; see Remark~\ref{rem:logconcavity_intrinsic_vols}. Theorem~\ref{theo:exact_asympt_V_j} (i) and the Stirling formula imply
$$
\lim_{n\to\infty} f_n(\alpha) = \kappa_{p}(\alpha)
+
\sup_{\theta>0}\Psi_{p,\alpha}(\theta) =:  g_p(\alpha),
$$
for every $\alpha \in (0,1)$. This convergence holds also for $\alpha = 0$ since $V_0(n^{1/p}\mathbb B_p^n)=1$, and can be directly verified for $\alpha=1$; see Example~\ref{example:exp_profile_ell_p_at_alpha_1}.   Thus, $f_n$ converges to $g_p$ pointwise on $[0,1]$. Since each $f_n$ is concave, Lemma~\ref{lem:pointwise_implies_uniform_concave} gives $f_n\to g_p$
uniformly on $[0,1]$.
\end{proof}

\subsection{Left-edge regime} In this section, we derive the asymptotics of $I_{n,j}(\lambda_1,\dots,\lambda_r)$  in the regime where $j\geq 1$ is a fixed integer.
\begin{proposition}[Laplace asymptotics in the left-edge regime]\label{prop:laplace_asymptotics_left_edge}
Fix $1<p<\infty$, an integer $r\ge 1$, and consider complex parameters $\lambda_1,\dots,\lambda_r\in\mathbb C$ satisfying
\begin{equation}\label{eq:lambdas_assumptions_asymptotics_left}
\Re\lambda_s>\max\{-1,1-p\},
\qquad s=1,\dots,r,
\qquad
\Re(\Lambda+j+p)>0,
\end{equation}
where $\Lambda:=\lambda_1+\cdots+\lambda_r$. Fix an integer $j\geq 1$ and recall that for sufficiently large  integer $n$ we defined
\[
I_{n, j}(\lambda_1,\dots,\lambda_r)
:=
\int_{\partial \mathbb B_p^n}
\prod_{j=1}^r |x_j|^{\lambda_j}\,
d\Phi_{j}\bigl(\mathbb B_p^n,x\bigr).
\]
Then the following assertions hold:

\vspace{2mm}
\noindent
(i) Absolute asymptotic: As $n\to\infty$, one has
\[
I_{n,j}(\lambda_1,\dots,\lambda_r)\sim
\frac{
\Gamma\!\left(\frac{2p-1}{2p-2}\right)\,
\Gamma\!\left(\frac1{2p-2}\right)^j
\prod_{s=1}^r \Gamma\!\left(\frac{\lambda_s+p-1}{2p-2}\right)
}{
j!\,(p-1)^j\,\pi^{(r+1)/2}
}
\left(
\frac{\sqrt\pi}{\Gamma\!\left(\frac{2p-1}{2p-2}\right)}
\right)^{\frac{\Lambda+j+p}{p}}
n^{\,j+1-\frac{\Lambda+j+p}{p}}.
\]
In particular, setting $\lambda_1=\ldots= \lambda_r = 0$ gives
\[
I_{n,j}(0,\dots,0)\sim
\frac{
\Gamma\!\left(\frac{2p-1}{2p-2}\right)\,
\Gamma\!\left(\frac1{2p-2}\right)^j
}{
j!\,(p-1)^j\,\sqrt{\pi}
}
\left(
\frac{\sqrt\pi}{\Gamma\!\left(\frac{2p-1}{2p-2}\right)}
\right)^{\frac{j+p}{p}}
n^{\frac{j(p-1)}{p}}.
\]

\vspace{2mm}
\noindent
(ii) Relative asymptotic: As $n\to\infty$, one has
\[
\frac{I_{n,j}(\lambda_1,\dots,\lambda_r)}{I_{n,j}(0,\dots,0)}
\sim
\left(\prod_{s=1}^r
\frac{\Gamma\!\left(\frac{\lambda_s+p-1}{2p-2}\right)}{\sqrt{\pi}}
\right)
\left(
\frac{\sqrt\pi}{\Gamma\!\left(\frac{2p-1}{2p-2}\right)}
\right)^{\frac{\Lambda}{p}}
n^{-\Lambda/p}.
\]
\end{proposition}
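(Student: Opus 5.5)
We start from the exact representation \eqref{eq:I_n_lambda_1_ldots_lambda_n_exact_formula_setup} with $m=n-j$ and $j$ fixed:
\[
I_{n,j}(\lambda_1,\dots,\lambda_r)=\frac{p(p-1)^{m-1}}{2\pi^{m/2}\Gamma(\frac{j+\Lambda+p}{p})}\int_0^\infty\theta^{\frac m2-1}S_n(\theta)\,d\theta .
\]
In $S_n(\theta)$ only $j$ indices carry the factor $\mathcal I_p$-type weight $\mathcal F_p(\theta;\lambda_k)$. The remaining $m-1$ indices carry $\mathcal J_p$-type weights, and one index carries a $\mathcal K_p$-type weight. Since $m\to\infty$ while $j$ stays fixed, the integrand is dominated by $\theta^{m/2}\mathcal J_p(\theta)^{m}$, that is, by $\exp\{m(\tfrac12\log\theta+\log\mathcal J_p(\theta))\}$.

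\emph{Rescaling.} I would first note that $\theta^{1/2}\mathcal J_p(\theta)$ is increasing in $\theta$ and tends to $B_p=\sqrt\pi/(p-1)$ as $\theta\to\infty$. Indeed, the substitution $y=\theta^{-1/(2p-2)}z$ gives
\[
\theta^{1/2}\mathcal J_p(\theta)=\int|z|^{p-2}e^{-\theta^{-p/(2p-2)}|z|^p-|z|^{2p-2}}\,dz .
\]
So there is no interior maximizer, and the mass escapes to $\theta\to\infty$. Set $s:=\theta^{-p/(2p-2)}$. The expansion
\[
\theta^{1/2}\mathcal J_p(\theta)=B_p\bigl(1-s\,c_1+O(s^2)\bigr),\qquad c_1=\frac{\int|z|^{2p-2}e^{-|z|^{2p-2}}dz}{\int|z|^{p-2}e^{-|z|^{2p-2}}dz},
\]
shows that $(\theta^{1/2}\mathcal J_p/B_p)^{m}\approx e^{-mc_1 s}$. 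The relevant scale is therefore $s\asymp 1/n$, i.e.\ $\theta\asymp n^{(2p-2)/p}$. On this scale I substitute $\theta=(n t)^{(2p-2)/p}$ and use the leading asymptotics of the other factors from the left-edge continuity remark:
\begin{itemize}
\item $\mathcal F_p(\theta;\nu)\sim\theta^{-(\nu+1)/(2p-2)}\cdot 2^{-1}\cdot\frac{2}{2p-2}\Gamma(\frac{\nu+1}{2p-2})$, the right power of $\theta$ times a gamma constant.
\item $\mathcal K_p(\theta)\sim\theta^{-(2p-1)/(2p-2)}\Gamma(\frac{2p-1}{2p-2})/(p-1)$.
\end{itemize}

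\emph{Combinatorics and the Gamma integral.} Next I would sort the terms of $S_n$ as in Steps 2A/2B of the bulk proof, now according to which indices are $\mathcal I$-type. There are two cases:
\begin{itemize}
\item The dominant configuration has all $r$ tagged coordinates among the $\mathcal J$-type indices, each giving a factor $\mathcal F_p(\theta;\lambda_s+p-2)/\mathcal F_p(\theta;p-2)$. The number of such configurations is $\sim n\binom{n-1}{j}\sim n^{j+1}/j!$.
\item Configurations in which a tagged index is $\mathcal I$-type or $\mathcal K$-type carry an extra relative factor $O(j/n)$ or $O(1/n)$. Since the $\theta$-powers are then equivalent up to constants, these configurations are negligible.
\end{itemize}
Collecting the powers of $\theta$ in the dominant term leaves a one-dimensional integral of the form $\int_0^\infty t^{\kappa-1}e^{-c_1 t}\,dt\cdot(\text{powers of }n)$. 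Using $\Gamma(\frac{j+\Lambda+p}{p})$ from the prefactor, the exponent is $\kappa=(j+\Lambda+p)/p$, so this Gamma factor cancels exactly, as it should. The constant $c_1$ can be computed from Gamma integrals:
\[
c_1=\frac{\Gamma(\frac{2p-1}{2p-2})}{\sqrt\pi},
\]
which is what produces the factor $(\sqrt\pi/\Gamma(\frac{2p-1}{2p-2}))^{(\Lambda+j+p)/p}$. Tracking $(p-1)^{m-1}B_p^{m}/\pi^{m/2}=(p-1)^{-1}$ and the remaining gamma constants should give (i). Part (ii) then follows by dividing the $\lambda$-formula by the $\lambda=0$ formula.

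\emph{Main obstacle.} The hard step is justifying the passage to the limit in the rescaled integral. This requires dominated convergence uniformly in $n$. The lower region $\theta\ll n^{(2p-2)/p}$ is handled by the monotonicity of $\theta^{1/2}\mathcal J_p(\theta)$ together with a bound of the type $\theta^{1/2}\mathcal J_p(\theta)/B_p\le e^{-c\min(s,1)}$. The factors $\theta^{(\text{negative power})}$ must be checked to be integrable near $t=0$, and this is where the condition $\Re(\Lambda+j+p)>0$ enters. I would prove the bound $1-\theta^{1/2}\mathcal J_p/B_p\ge c\min(s,1)$ directly from the integral representation, since $1-e^{-s|z|^p}\ge c\min(s|z|^p,1)$.
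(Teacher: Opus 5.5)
Your proposal follows essentially the same route as the paper's proof: the generic configurations with all tagged coordinates of $\mathcal J$-type (there are $(n-r)\binom{n-r-1}{j}\sim n^{j+1}/j!$ of them, with an $O(n^j)$ remainder), the rescaling $u=N_n\,\Gamma(\frac{2p-1}{2p-2})\pi^{-1/2}\theta^{-p/(2p-2)}$, and a Gamma integral that cancels $\Gamma(\frac{\Lambda+j+p}{p})$ in the prefactor; the only difference is that the paper splits at a fixed $T$ and uses $\mathcal J_p(\theta)\le\frac{\sqrt\pi}{p-1}\rho_T\theta^{-1/2}$ with $\rho_T<1$ on $(0,T]$, whereas you use the single global bound $1-\theta^{1/2}\mathcal J_p/B_p\ge c\min(s,1)$, which works equally well. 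Two small slips to fix when you write it out: the constant in your first bullet should be $\Gamma(\frac{\nu+1}{2p-2})/(p-1)$, consistent with your formulas for $\mathcal K_p$ and $B_p$, and to obtain $\int_0^\infty t^{\kappa-1}e^{-c_1t}\,dt$ the substitution should be $\theta=(n/t)^{(2p-2)/p}$ rather than $(nt)^{(2p-2)/p}$.
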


\begin{remark}
In view of the identity $V_{j}(\mathbb B_p^n) = I_{n,j}(0,\dots,0)$, Part (i) of Proposition~\ref{prop:laplace_asymptotics_left_edge} proves Part~(ii) of Theorem~\ref{theo:exact_asympt_V_j}.
\end{remark}

\begin{proof}[Proof of Proposition~\ref{prop:laplace_asymptotics_left_edge}]
Put $\lambda_{r+1}=\cdots=\lambda_n=0$.
Write
\[
N_n:=n-r-j-1.
\]
Since $r$ and $j$ are fixed, $N_n\to\infty$ and $N_n\sim n$ as $n\to\infty$.

\medskip
\noindent
\textit{Step 1: Large-$\theta$ asymptotic of  $\mathcal F_p(\theta;\nu)$.}
Fix $\nu\in\mathbb C$ with $\Re \nu>-1$ and recall that we defined
$$
\mathcal F_p(\theta;\nu) =\int_{\mathbb R}|y|^\nu e^{-|y|^p-\theta |y|^{2p-2}}\,dy, \qquad \theta >0.
$$
We start by showing  that, as $\theta\to\infty$,
\begin{equation}\label{eq:mathcal_F_p_large_theta_asympt}
\mathcal F_p(\theta;\nu)
=
\frac{\Gamma\!\left(\frac{\nu+1}{2p-2}\right)}{p-1}\,
\theta^{-\frac{\nu+1}{2p-2}}
\left(
1-
\frac{
\Gamma\!\left(\frac{\nu+p+1}{2p-2}\right)
}{
\Gamma\!\left(\frac{\nu+1}{2p-2}\right)
}
\theta^{-\frac p{2p-2}}
+
O\!\left(\theta^{-\frac{2p}{2p-2}}\right)
\right).
\end{equation}
The substitution  $y=\theta^{-1/(2p-2)}x$  gives
\[
\mathcal F_p(\theta;\nu)
=
\theta^{-\frac{\nu+1}{2p-2}}
\int_{\mathbb R}|x|^\nu e^{-|x|^{2p-2}}e^{-\theta^{-\frac p{2p-2}}|x|^p}\,dx.
\]
Using $e^{-u}=1-u+O(u^2)$ with the remainder bounded by $Cu^2$ uniformly for $u\ge 0$, we obtain
\[
e^{-\theta^{-\frac p{2p-2}}|x|^p}
=
1-\theta^{-\frac p{2p-2}}|x|^p
+O\!\left(\theta^{-\frac{2p}{2p-2}}|x|^{2p}\right),
\qquad x\in \mathbb R, \; \theta \geq 0.
\]
Since $|x|^{\Re\nu+2p}e^{-|x|^{2p-2}}\in L^1(\mathbb R)$, dominated convergence yields~\eqref{eq:mathcal_F_p_large_theta_asympt}.

\medskip
\noindent\textit{Step 2: Decomposition of  $S_n(\theta)$.}
Recall from  Equations~\eqref{eq:I_n_lambda_1_ldots_lambda_n_exact_formula_conditions_lambdas},  \eqref{eq:I_n_lambda_1_ldots_lambda_n_exact_formula_setup}, \eqref{eq:S_n_def_setup_subsec} the representation
\begin{equation}\label{eq:I_n_lambda_1_ldots_lambda_n_exact_formula_left_edge_proof}
I_{n,j}(\lambda_1,\dots,\lambda_r)
=
\frac{p(p-1)^{n-j-1}}{2\pi^{(n-j)/2}\,
\Gamma\!\left(\frac{j+\Lambda+p}{p}\right)}
\int_0^\infty \theta^{\frac {n-j}2-1} S_n(\theta)\,d\theta,
\end{equation}
where
\begin{align}
S_n(\theta)
&:=
\sum_{i=1}^n
\sum_{\substack{I\subseteq \{1,\ldots,n\}\setminus\{i\}\\ |I|=n-j-1}}
\mathcal F_p(\theta;\lambda_i+2p-2)
\prod_{j\in I}\mathcal F_p(\theta;\lambda_j+p-2)
\prod_{k\notin I\cup\{i\}}\mathcal F_p(\theta;\lambda_k).
\label{eq:S_n_def_left_edge_proof_recall}
\end{align}
The subsets $I\subseteq\{1,\ldots,n\}\setminus\{i\}$ with $|I|=n-j-1$ are in bijection with the subsets $E=\{1,\ldots,n\}\setminus (I\cup\{i\}) \subseteq\{1,\ldots,n\}\setminus\{i\}$ satisfying  $|E|=j$.
Thus,
\[
S_n(\theta)=
\sum_{i=1}^n
\sum_{\substack{E\subseteq\{1,\ldots,n\}\setminus\{i\}\\ |E|=j}}
T_{n; i,E}(\theta),
\]
where
\[
T_{n;i,E}(\theta):=
\mathcal F_p(\theta;\lambda_i+2p-2)
\prod_{\ell\notin E\cup\{i\}}\mathcal F_p(\theta;\lambda_\ell+p-2)
\prod_{k\in E}\mathcal F_p(\theta;\lambda_k).
\]
As we shall see, the main contribution comes from the pairs $(i,E)$ satisfying  $i\in \{r+1,\ldots, n\}$, $E\subseteq \{r+1,\dots,n\}\setminus\{i\}$, and $|E|=j$. We call these pairs ``generic''. Since $\lambda_{r+1}= \ldots = \lambda_n = 0$ and
$$
\mathcal I_p(\theta)=\mathcal F_p(\theta;0),\qquad
\mathcal J_p(\theta)=\mathcal F_p(\theta;p-2),\qquad
\mathcal K_p(\theta)=\mathcal F_p(\theta;2p-2),
$$
for every generic pair $(i,E)$ one has
\begin{equation}\label{eq:T_n_i_E_generic_left_edge}
T_{n;i,E}(\theta) =
\mathcal K_p(\theta)\,
\mathcal I_p(\theta)^j\,
\mathcal J_p(\theta)^{N_n}
\prod_{s=1}^r \mathcal F_p(\theta;\lambda_s+p-2).
\end{equation}
The number of generic pairs $(i,E)$ is
\[
M_n^\#:=(n-r)\binom{n-r-1}{j}\sim \frac{n^{j+1}}{j!}, \qquad n\to\infty.
\]
We therefore decompose $S_n(\theta) = M_n(\theta) + R_n(\theta)$, where  the ``main contribution'' $M_n(\theta)$ and the ``remainder'' $R_n(\theta)$ are defined by
\begin{equation}\label{eq:main_contribution_left_edge_proof}
M_n(\theta):=
M_n^\#\,
\mathcal K_p(\theta)\,
\mathcal I_p(\theta)^j\,
\mathcal J_p(\theta)^{N_n}
\prod_{s=1}^r \mathcal F_p(\theta;\lambda_s+p-2),
\qquad
R_n(\theta):=S_n(\theta)-M_n(\theta).
\end{equation}
Since the total number of pairs $(i,E)$ is $n\binom{n-1}{j}$, the number of terms in $R_n$ is
\begin{equation}\label{eq:R_n_num_versus_M_n_num}
R_n^\# := n\binom{n-1}{j}-(n-r)\binom{n-r-1}{j}=O(n^j) = o(M_n^\#), \qquad n\to\infty.
\end{equation}
In the next two steps we derive the asymptotics of $\int_0^\infty \theta^{(n-j)/2-1}M_n(\theta)\,d\theta$ (the main contribution) and $\int_0^\infty \theta^{(n-j)/2-1}R_n(\theta)\,d\theta$ (the negligible contribution), as $n\to\infty$.

\medskip
\noindent\textit{Step 3: Main contribution.}
We now determine the asymptotics of  $\int_{0}^\infty \theta^{(n-j)/2-1}M_n(\theta)\, d\theta$.
From Equation~\eqref{eq:mathcal_F_p_large_theta_asympt} we have, as $\theta\to\infty$,
\begin{align}
\mathcal K_p(\theta)
&=
\frac{\Gamma\!\left(\frac{2p-1}{2p-2}\right)}{p-1}
\theta^{-\frac{2p-1}{2p-2}}
\left(1+O\!\left(\theta^{-\frac p{2p-2}}\right)\right),
\label{eq:K_p_asympt}
\\
\mathcal I_p(\theta)
&=
\frac{\Gamma\!\left(\frac1{2p-2}\right)}{p-1}
\theta^{-\frac1{2p-2}}
\left(1+O\!\left(\theta^{-\frac p{2p-2}}\right)\right),
\label{eq:I_p_asympt}
\\
\mathcal J_p(\theta)
&=
\frac{\sqrt\pi}{p-1}\,\theta^{-1/2}
\left(
1-
\frac{\Gamma\!\left(\frac{2p-1}{2p-2}\right)}{\sqrt\pi}\,
\theta^{-\frac p{2p-2}}
+
O\!\left(\theta^{-\frac{2p}{2p-2}}\right)
\right).  \label{eq:J_p_asympt}
\end{align}
Additionally, for all $s=1,\dots,r$, we have
$$
\mathcal F_p(\theta;\lambda_s+p-2)
=
\frac{\Gamma\!\left(\frac{\lambda_s+p-1}{2p-2}\right)}{p-1}
\theta^{-\frac{\lambda_s+p-1}{2p-2}}
\left(1+O\!\left(\theta^{-\frac p{2p-2}}\right)\right),
$$
as $\theta\to\infty$.
Substituting these results  into~\eqref{eq:main_contribution_left_edge_proof} yields
\begin{multline}\label{eq:tech_estimate_124}
\theta^{(n-j)/2-1}M_n(\theta)
=
M_n^\#
\frac{
\Gamma\!\left(\frac{2p-1}{2p-2}\right)\,
\Gamma\!\left(\frac1{2p-2}\right)^j
\prod_{s=1}^r \Gamma\!\left(\frac{\lambda_s+p-1}{2p-2}\right)
}{
(p-1)^{j+r+1}
}
\left(\frac{\sqrt\pi}{p-1}\right)^{N_n}
\theta^{-1-\frac{\Lambda+j+p}{2p-2}}
\\
\qquad\times
\left(1+O\!\left(\theta^{-\frac p{2p-2}}\right)\right)
\left(
1-
\frac{\Gamma\!\left(\frac{2p-1}{2p-2}\right)}{\sqrt\pi}
\theta^{-\frac p{2p-2}}
+
O\!\left(\theta^{-\frac{2p}{2p-2}}\right)
\right)^{N_n},
\end{multline}
as $\theta \to\infty$.
In a moment we shall see that the main contribution to $\int_0^{\infty} \theta^{(n-j)/2-1}M_n(\theta) \, d\theta$ comes from those values of $\theta$ for which   the quantity
\[
u:=
N_n\frac{\Gamma\!\left(\frac{2p-1}{2p-2}\right)}{\sqrt\pi}\,
\theta^{-\frac p{2p-2}}
\]
is of constant order.  First observe that under this change of variables,
\[
\theta^{-1-\frac{\Lambda+j+p}{2p-2}}\,d\theta
=
\frac{2p-2}{p}
\left(
N_n\frac{\Gamma\!\left(\frac{2p-1}{2p-2}\right)}{\sqrt\pi}
\right)^{-\frac{\Lambda+j+p}{p}}
u^{\frac{\Lambda+j+p}{p}-1}\,du.
\]
Let $T>0$ be sufficiently large. Then
\begin{multline*}
\int_T^\infty \theta^{(n-j)/2-1}M_n(\theta)\,d\theta
=
\frac{2p-2}{p}\,
M_n^\#
\frac{
\Gamma\!\left(\frac{2p-1}{2p-2}\right)\,
\Gamma\!\left(\frac1{2p-2}\right)^j
\prod_{s=1}^r \Gamma\!\left(\frac{\lambda_s+p-1}{2p-2}\right)
}{
(p-1)^{j+r+1}
}
\left(\frac{\sqrt\pi}{p-1}\right)^{N_n}
\\
\qquad\times
\left(
N_n\frac{\Gamma\!\left(\frac{2p-1}{2p-2}\right)}{\sqrt\pi}
\right)^{-\frac{\Lambda+j+p}{p}}
\int_0^{\delta N_n}
u^{\frac{\Lambda+j+p}{p}-1} G_n(u)\,du,
\end{multline*}
where $ \delta= \Gamma(\frac{2p-1}{2p-2}) \pi^{-1/2} T^{-\frac p{2p-2}}$ and
\[
G_n(u)=
\left(1+O\!\left(\frac{u}{N_n}\right)\right)
\left(1-\frac{u}{N_n}+O\!\left(\frac{u^2}{N_n^2}\right)\right)^{N_n},
\]
uniformly for $0\le u\le \delta N_n$, once $T$ is chosen so large that $\delta$ is sufficiently small. Now, for each fixed $u>0$, one has
\[
G_n(u)\to e^{-u}, \qquad n\to\infty.
\]
Moreover, if $\delta>0$ is small enough, then for $0\le u\le \delta N_n$, one has $|G_n(u)|\le C e^{-u/2}$ and hence
\[
\left|u^{\frac{\Lambda+j+p}{p}-1}G_n(u)\right|
\le
C\,u^{\frac{\Re(\Lambda+j+p)}{p}-1}e^{-u/2}.
\]
The right-hand side is integrable on $(0,\infty)$ by the assumption $\Re(\Lambda+j+p)>0$. By dominated convergence,
\[
\int_0^{\delta N_n}
u^{\frac{\Lambda+j+p}{p}-1}G_n(u)\,du
\to
\Gamma\!\left(\frac{\Lambda+j+p}{p}\right),
\qquad n\to\infty.
\]
Thus, as $n\to\infty$,
\begin{multline}\label{eq:int_T_infty_main_contrib_left_edge}
\int_T^\infty \theta^{(n-j)/2-1}M_n(\theta)\,d\theta
\sim
\frac{2p-2}{p}\,
M_n^\#
\frac{
\Gamma\!\left(\frac{2p-1}{2p-2}\right)\,
\Gamma\!\left(\frac1{2p-2}\right)^j
\prod_{s=1}^r \Gamma\!\left(\frac{\lambda_s+p-1}{2p-2}\right)
}{
(p-1)^{j+r+1}
}
\left(\frac{\sqrt\pi}{p-1}\right)^{N_n}
\\
\qquad\times
\left(
N_n\frac{\Gamma\!\left(\frac{2p-1}{2p-2}\right)}{\sqrt\pi}
\right)^{-\frac{\Lambda+j+p}{p}}
\Gamma\!\left(\frac{\Lambda+j+p}{p}\right).
\end{multline}

Next we show that $\int_0^T \theta^{(n-j)/2-1}M_n(\theta)\,d\theta$ is exponentially smaller than this expression.
We observe that  for every finite $\theta>0$,
\[
\mathcal J_p(\theta)
=
\int_{\mathbb R} |x|^{p-2}e^{-|x|^p-\theta |x|^{2p-2}}\,dx
<
\int_{\mathbb R} |x|^{p-2}e^{-\theta |x|^{2p-2}}\,dx
=
\frac{\sqrt\pi}{p-1}\,\theta^{-1/2},
\]
because $e^{-|x|^p}<1$ for $x\neq 0$. Since $\mathcal J_p(0)$ is finite and  $\theta\mapsto \frac{p-1}{\sqrt\pi}\,\theta^{1/2}\mathcal J_p(\theta)$ is continuous on $[0,T]$, there exists $\rho_T\in(0,1)$ such that
\begin{equation}\label{eq:J_p_theta_estimate_theta_-1/2}
\mathcal J_p(\theta)\le \frac{\sqrt\pi}{p-1}\rho_T\,\theta^{-1/2},
\qquad 0<\theta\le T.
\end{equation}
On the other hand, \eqref{eq:T_n_i_E_generic_left_edge} implies that for $\theta \in (0,T]$ and every generic pair $(i,E)$ we have $|T_{n;i,E}(\theta)| \leq C |\mathcal J_p(\theta)|^{N_n}$. Since $N_n=n-r-j-1$, we get
\begin{equation}\label{eq:int_0_T_M_n_left_edge_proof}
\int_0^T \theta^{(n-j)/2-1}M_n(\theta)\,d\theta
=
O\!\left(
M_n^\#
\left(\frac{\sqrt\pi}{p-1}\rho_T\right)^{N_n}
\right),
\end{equation}
which is smaller than the right-hand side of~\eqref{eq:int_T_infty_main_contrib_left_edge} by the exponentially decaying factor $\rho_T^{N_n}$.  Thus,
\[
\int_0^T \theta^{(n-j)/2-1}M_n(\theta)\,d\theta
=
o\left(\int_T^\infty \theta^{(n-j)/2-1}M_n(\theta)\,d\theta\right), \qquad n\to\infty.
\]

\medskip
\noindent\textit{Step 4: Negligible contribution.}
Fix some pair $(i, E)$ with $i\in \{1,\ldots,n\}$ and  $E\subseteq\{1,\ldots,n\}\setminus\{i\}$ satisfying  $|E|=j$.  Consider the summand
\begin{equation}\label{eq:T_i_E}
T_{n; i,E}(\theta) =
\mathcal F_p(\theta;\lambda_i+2p-2)
\prod_{\ell\notin E\cup\{i\}}\mathcal F_p(\theta;\lambda_\ell+p-2)
\prod_{k\in E}\mathcal F_p(\theta;\lambda_k).
\end{equation}

Recall the convention $\lambda_{r+1} = \ldots = \lambda_n = 0$.
Among the indices $\ell = r+1,\dots,n$, at most $j+1$ fail to contribute a factor $\mathcal J_p(\theta)=\mathcal F_p(\theta; p-2)$: these failures come  from the set $E\cup \{i\}$. Thus, every $T_{n; i,E}$ contains at least $n-r-(j+1)=N_n$ factors $\mathcal J_p(\theta)$. To every such factor we apply the asymptotic~\eqref{eq:J_p_asympt}.
Every remaining factor belongs to the fixed finite collection
\[
\mathcal F_p(\theta;\lambda_s),\quad
\mathcal F_p(\theta;\lambda_s+p-2),\quad
\mathcal F_p(\theta;\lambda_s+2p-2),\quad
\mathcal F_p(\theta;0),\quad
\mathcal F_p(\theta; 2p-2), \qquad s=1,\ldots, r.
\]
The number of these factors is at most $r+j+1$, which does not depend on $n$. To any of these factors, say $\mathcal F_p(\theta;\nu)$, we apply the bound
$$
\mathcal F_p(\theta;\nu)
\leq C
\,
\theta^{-\frac{\nu+1}{2p-2}},
\qquad \theta >T,
$$
which follows from~\eqref{eq:mathcal_F_p_large_theta_asympt}. Here $T$ is sufficiently large. Combining bounds  for all factors gives
\begin{multline}\label{eq:tech_estimate_127}
|T_{n; i,E}(\theta)|
\leq
C_1
\left(\frac{\sqrt\pi}{p-1}\right)^{N_n}
\theta^{-\frac{\Re \Lambda + (2p - 2) + (n - j - 1)(p - 2) + n}{2p-2}}
\left(1+O\!\left(\theta^{-\frac p{2p-2}}\right)\right)
\\
\left(
1-
\frac{\Gamma\!\left(\frac{2p-1}{2p-2}\right)}{\sqrt\pi}
\theta^{-\frac p{2p-2}}
+
O\!\left(\theta^{-\frac{2p}{2p-2}}\right)
\right)^{N_n},
\qquad \theta >T.
\end{multline}
The power of $\theta$ is explained as follows: The sum of all parameters $\nu$  in the factors  $\mathcal F_p (\theta; \nu)$ appearing in~\eqref{eq:T_i_E} is always $\Lambda + (2p - 2) + (n - j - 1)(p - 2)$,
which is independent of $(i,E)$.

The right-hand side of~\eqref{eq:tech_estimate_127} is independent of $(i, E)$. Recalling that $R_n(\theta)$ has $R_n^\#$ summands of the form $T_{n; i, E}$, and multiplying by $\theta^{(n-j)/2 - 1}$,  we obtain
\begin{multline*}
\bigl|\theta^{(n-j)/2-1}R_n(\theta)\bigr|
\le
C_1 \,R_n^\#\,
\left(\frac{\sqrt\pi}{p-1}\right)^{N_n}
\theta^{-1-\frac{\Re \Lambda + j+p}{2p-2}}
\left(1+O\!\left(\theta^{-\frac p{2p-2}}\right)\right)
\\
\left(
1-
\frac{\Gamma\!\left(\frac{2p-1}{2p-2}\right)}{\sqrt\pi}
\theta^{-\frac p{2p-2}}
+
O\!\left(\theta^{-\frac{2p}{2p-2}}\right)
\right)^{N_n},
\qquad \theta >T.
\end{multline*}

This is the same right-hand side as in~\eqref{eq:tech_estimate_124}, except that the multiplicative constant is different, $\Lambda$ is replaced by $\Re \Lambda$ and, most importantly, $M_n^\#$ is replaced by $R_n^\#$. Recall from~\eqref{eq:R_n_num_versus_M_n_num} that
$R_n^\#  = o(M_n^\#)$ as  $n\to\infty$. Repeating the argument of Step~3 therefore gives
$$
\int_T^\infty \theta^{(n-j)/2-1}R_n(\theta)\,d\theta = o\left(\int_T^\infty \theta^{(n-j)/2-1}M_n(\theta)\,d\theta\right).
$$

It remains to show that $\int_0^T \theta^{(n-j)/2-1}R_n(\theta)\,d\theta$ is also negligible.  For $0<\theta\le T$, every non-$\mathcal J_p$ factor in~\eqref{eq:T_i_E}  is bounded, while every $\mathcal J_p(\theta)$ is also bounded and additionally satisfies
\[
\mathcal J_p(\theta)\le \frac{\sqrt\pi}{p-1}\rho_T\,\theta^{-1/2},
\qquad 0<\theta\le T,
\]
with $\rho_T\in (0,1)$; see~\eqref{eq:J_p_theta_estimate_theta_-1/2}.  The number of $\mathcal J_p$-factors is at least $N_n$, hence $|T_{n;i,E}(\theta)| \leq C |\mathcal J_p(\theta)|^{N_n}$ for every summand in $R_n(\theta)$. Since  $N_n=n-r-j-1$, we obtain the following analogue of~\eqref{eq:int_0_T_M_n_left_edge_proof}:
\begin{equation}\label{eq:int_0_T_R_n_left_edge_proof}
\int_0^T \theta^{(n-j)/2-1}R_n(\theta)\,d\theta
=
O\!\left(
R_n^\#
\left(\frac{\sqrt\pi}{p-1}\rho_T\right)^{N_n}
\right)
=
o\!\left(
M_n^\#
\left(\frac{\sqrt\pi}{p-1}\rho_T\right)^{N_n}
\right).
\end{equation}
This is the same right-hand side as in~\eqref{eq:int_0_T_M_n_left_edge_proof}, and it is again  smaller than the right-hand side of~\eqref{eq:int_T_infty_main_contrib_left_edge} by the exponentially decaying factor $\rho_T^{N_n}$. So
$$
\int_0^T \theta^{(n-j)/2-1}R_n(\theta)\,d\theta = o\left(\int_T^\infty \theta^{(n-j)/2-1}M_n(\theta)\,d\theta\right), \qquad n\to\infty.
$$

\medskip
\noindent\textit{Step 5: Taking pieces together.}
Recall that
\[
I_{n,j}(\lambda_1,\dots,\lambda_r)
=
\frac{p(p-1)^{n-j-1}}{2\pi^{(n-j)/2}\Gamma\!\left(\frac{\Lambda+j+p}{p}\right)}
\int_0^\infty \theta^{(n-j)/2-1}S_n(\theta)\,d\theta.
\]
Since $S_n(\theta) = M_n(\theta) + R_n(\theta)$, Steps 3 and 4  give $\int_0^\infty \theta^{(n-j)/2-1} S_n(\theta)\,d\theta
\sim
\int_T^\infty \theta^{(n-j)/2-1}M_n(\theta)\,d\theta$ and therefore, by~\eqref{eq:int_T_infty_main_contrib_left_edge},
\begin{multline*}
\int_0^\infty \theta^{(n-j)/2-1} S_n(\theta)\,d\theta
\sim
\frac{2p-2}{p}\,
M_n^\#
\frac{
\Gamma\!\left(\frac{2p-1}{2p-2}\right)\,
\Gamma\!\left(\frac1{2p-2}\right)^j
\prod_{s=1}^r \Gamma\!\left(\frac{\lambda_s+p-1}{2p-2}\right)
}{
(p-1)^{j+r+1}
}
\\
\times\left(\frac{\sqrt\pi}{p-1}\right)^{N_n}
\left(
N_n\frac{\Gamma\!\left(\frac{2p-1}{2p-2}\right)}{\sqrt\pi}
\right)^{-\frac{\Lambda+j+p}{p}}
\Gamma\!\left(\frac{\Lambda+j+p}{p}\right).
\end{multline*}
Substituting this asymptotic and taking into account the relations $M_n^\# \sim n^{j+1}/j!$ and $N_n= n-r-j-1\sim n$ gives
\[
I_{n,j}(\lambda_1,\dots,\lambda_r)\sim
\frac{n^{j+1-\frac{\Lambda+j+p}{p}}}{j!}
\frac{
\Gamma\!\left(\frac{2p-1}{2p-2}\right)\,
\Gamma\!\left(\frac1{2p-2}\right)^j
\prod_{s=1}^r \Gamma\!\left(\frac{\lambda_s+p-1}{2p-2}\right)
}{
(p-1)^j\,\pi^{(r+1)/2}
}
\left(
\frac{\sqrt\pi}{\Gamma\!\left(\frac{2p-1}{2p-2}\right)}
\right)^{\frac{\Lambda+j+p}{p}}.
\]
This is exactly the claimed asymptotic formula.
\end{proof}

\subsection{Right-edge regime}In this section, we prove an asymptotic formula for  $I_{n,j(n)}(\lambda_1,\dots,\lambda_r)$  in the regime where $j(n) = n-m$ and $m \geq 1$ is a fixed integer.
\begin{proposition}[Laplace asymptotics in the right-edge regime]\label{prop:laplace_asymptotics_right_edge}
Fix $1<p<\infty$, an integer $r\ge 1$, and complex parameters
\begin{equation}\label{eq:lambdas_assumptions_asymptotics_right_edge}
\lambda_1,\dots,\lambda_r\in\mathbb C,
\qquad
\Re \lambda_j>\max\{-1,1-p\},\quad j=1,\dots,r.
\end{equation}
Put $\Lambda:=\lambda_1+\cdots+\lambda_r$. Fix an integer $m\geq 1$ and recall that for sufficiently large  integer $n$ we defined
\[
I_{n, n-m}(\lambda_1,\dots,\lambda_r)
:=
\int_{\partial \mathbb B_p^n}
\prod_{j=1}^r |x_j|^{\lambda_j}\,
d\Phi_{n-m}\bigl(\mathbb B_p^n,x\bigr).
\]
Then the following assertions hold:

\vspace{2mm}
\noindent
(i) Absolute asymptotic: As $n\to\infty$, one has
\[
I_{n, n-m}(\lambda_1,\dots,\lambda_r)
\sim
\frac{\Gamma\!\left(\frac m2\right)}{2(m-1)!}
\left(
\frac{p(p-1)\Gamma\!\left(1-\frac1p\right)}
{\pi\,\Gamma\!\left(\frac1p\right)}
\right)^{m/2}
\left(\prod_{\ell=1}^r
\frac{\Gamma\!\left(\frac{\lambda_\ell+1}{p}\right)}
{\Gamma\!\left(\frac1p\right)}
\right)
\frac{\left(\frac{2}{p}\Gamma\!\left(\frac1p\right)\right)^n\,n^{m/2}}
{\Gamma\!\left(\frac{n+\Lambda+p-m}{p}\right)}.
\]
In particular, setting $\lambda_1=\ldots= \lambda_r = 0$ gives
\[
I_{n, n-m}(0,\dots,0)
\sim
\frac{\Gamma\!\left(\frac m2\right)}{2(m-1)!}
\left(
\frac{p(p-1)\Gamma\!\left(1-\frac1p\right)}
{\pi\,\Gamma\!\left(\frac1p\right)}
\right)^{m/2}
\frac{\left(\frac{2}{p}\Gamma\!\left(\frac1p\right)\right)^n\,n^{m/2}}
{\Gamma\!\left(\frac{n+p-m}{p}\right)}.
\]

\vspace{2mm}
\noindent
(ii) Relative asymptotic: As $n\to\infty$, one has
\[
\frac{I_{n, n-m}(\lambda_1,\dots,\lambda_r)}{I_{n, n-m}(0,\dots,0)}
\sim
\frac{\Gamma\!\left(\frac{n+p-m}{p}\right)}
{\Gamma\!\left(\frac{n+\Lambda+p-m}{p}\right)}
\cdot
\prod_{\ell=1}^r
\frac{\Gamma\!\left(\frac{\lambda_\ell+1}{p}\right)}
{\Gamma\!\left(\frac1p\right)}
\sim
\left(\frac{p}{n}\right)^{\Lambda/p}
\cdot
\prod_{\ell=1}^r
\frac{\Gamma\!\left(\frac{\lambda_\ell+1}{p}\right)}
{\Gamma\!\left(\frac1p\right)}.
\]
\end{proposition}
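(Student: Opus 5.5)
Starting point: the exact representation \eqref{eq:I_n_lambda_1_ldots_lambda_n_exact_formula_setup} with $j(n)=n-m$, so that
\[
I_{n,n-m}=\frac{p(p-1)^{m-1}}{2\pi^{m/2}\Gamma(\frac{n+\Lambda+p-m}{p})}\int_0^\infty\theta^{m/2-1}S_n(\theta)\,d\theta .
\]
Now $m$ is fixed, and $S_n$ is a sum over pairs $(i,I)$ with $|I|=m-1$. Each summand contains $n-m$ factors of type $\mathcal F_p(\theta;\lambda_k)$, and $n-r-m$ of these (at least) equal $\mathcal I_p(\theta)$. So the natural scale is small $\theta$: $\mathcal I_p(\theta)^{n}$ decays like $\exp(-n\theta\,\mathcal K_p(0)/\mathcal I_p(0))$. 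I will therefore substitute $\theta=v/n$ and carry out a Laplace-type analysis at the endpoint $\theta=0$.

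\textbf{Step 1 (generic pairs).} As in the left-edge proof, split $S_n=M_n+R_n$, where the generic pairs have $i\in\{r+1,\dots,n\}$ and $I\subseteq\{r+1,\dots,n\}\setminus\{i\}$. There are $M_n^\#=(n-r)\binom{n-r-1}{m-1}\sim n^m/(m-1)!$ of them, each equal to
\[
\mathcal K_p\,\mathcal J_p^{m-1}\,\mathcal I_p^{\,n-r-m}\prod_{s\le r}\mathcal F_p(\theta;\lambda_s).
\]
The remaining pairs number $O(n^{m-1})$. Each of those still carries at least $n-r-m$ factors $\mathcal I_p$, and its other factors (at most $m+r$ of them) are bounded near $0$ by a constant. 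Hence $R_n$ is $O(1/n)$ relative to $M_n$, provided the bounds used below are uniform.

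\textbf{Step 2 (expansion at $0$ and the main integral).} Expand $\mathcal I_p(\theta)=\mathcal I_p(0)\exp(-c\theta+O(\theta^2))$ with $c=\mathcal K_p(0)/\mathcal I_p(0)$; note that $\mathcal K_p(0)$ is finite. Then
\[
\mathcal I_p(v/n)^{n-r-m}\to \mathcal I_p(0)^{n-r-m}e^{-cv},
\]
and all other factors converge to their values at $0$. Using $\mathcal F_p(0;\nu)=\frac2p\Gamma(\frac{\nu+1}p)$, this gives
\[
\int_0^\infty\theta^{m/2-1}M_n\,d\theta\sim M_n^\#\,n^{-m/2}\,\mathcal K_p(0)\,\mathcal J_p(0)^{m-1}\,\mathcal I_p(0)^{n-m}\,\Gamma(m/2)\,c^{-m/2}\prod_{s}\frac{\Gamma(\frac{\lambda_s+1}p)}{\Gamma(\frac1p)} .
\]
Inserting $\mathcal I_p(0)=\frac2p\Gamma(\frac1p)$, $\mathcal J_p(0)=\frac2p\Gamma(\frac{p-1}p)$ and $\mathcal K_p(0)=\frac2p\Gamma(2-\frac1p)=\frac{2(p-1)}{p}\Gamma(1-\frac1p)$ yields $c=\frac{p(p-1)\Gamma(1-1/p)}{\Gamma(1/p)}\cdot\frac1p$. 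The powers $n^m n^{-m/2}$ and the constants should then combine to the stated formula; I expect the $\mathcal K_p(0)\mathcal J_p(0)^{m-1}$ prefactors together with $c^{-m/2}$ to collapse into the displayed $\bigl(\frac{p(p-1)\Gamma(1-1/p)}{\pi\Gamma(1/p)}\bigr)^{m/2}$ after using $\pi^{-m/2}$ and $(p-1)^{m-1}$. Part (ii) then follows by taking the ratio with $\lambda=0$ and applying gamma-ratio asymptotics.

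\textbf{Step 3 (main obstacle: justifying dominated convergence).} I need a uniform bound on the integrand over all of $(0,\infty)$. For $\theta\le\delta$, the inequality $\mathcal I_p(\theta)\le\mathcal I_p(0)e^{-c'\theta}$ with some $c'>0$ (which follows from log-convexity/monotonicity, or from the expansion directly) gives an integrable majorant $v^{m/2-1}e^{-c'v/2}$ after rescaling. For $\theta>\delta$, use $\mathcal I_p(\theta)\le\rho\,\mathcal I_p(0)$ with $\rho<1$, together with polynomial bounds $\mathcal F_p(\theta;\nu)\le C$ and the decay $\theta^{-(\nu+1)/(2p-2)}$ from \eqref{eq:mathcal_F_p_large_theta_asympt} to secure integrability at infinity. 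This makes the tail exponentially negligible, $\rho^{n}$ against a polynomial factor. The same bounds applied to $R_n$ complete the proof.
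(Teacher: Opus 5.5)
Your proposal follows the paper's proof: the same split $S_n=M_n+R_n$ into the $M_n^\#=(n-r)\binom{n-r-1}{m-1}$ generic pairs (all $m$ special positions in $\{r+1,\dots,n\}$) and the $O(n^{m-1})$ remaining ones, a Laplace analysis at the endpoint $\theta=0$ driven by $\mathcal I_p(\theta)^{n-r-m}$, and exponential negligibility of $[\delta,\infty)$ via $\mathcal I_p(\theta)\le\mathcal I_p(\delta)<\mathcal I_p(0)$, with enough decaying factors to secure integrability at infinity. The one difference is that you prove the endpoint step by hand, whereas the paper cites Wong's endpoint Laplace lemma. You rescale $\theta=v/n$ and use dominated convergence, with the majorant coming from $\mathcal I_p(\theta)\le\mathcal I_p(0)e^{-c'\theta}$ on $[0,\delta]$; log-convexity of $\mathcal I_p$ does give this, by comparison with the chord from $0$ to $\delta$. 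Either route is fine.

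There is, however, an arithmetic slip in the constant. Since $\Gamma(2-\frac1p)=(1-\frac1p)\Gamma(1-\frac1p)$, one has $\mathcal K_p(0)=\frac{2(p-1)}{p^2}\Gamma(1-\frac1p)=\frac{p-1}{p}\,\mathcal J_p(0)$, not $\frac{2(p-1)}{p}\Gamma(1-\frac1p)$. Consequently $c=\mathcal K_p(0)/\mathcal I_p(0)=\frac{(p-1)\Gamma(1-1/p)}{p\,\Gamma(1/p)}$, which is smaller than your value by a factor $p$. If you carry out the simplification with your values, the final constant comes out off by the factor $p^{1-m/2}$, so the collapse you expect fails unless $m=2$. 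With the correct values,
\[
\mathcal K_p(0)\,\mathcal J_p(0)^{m-1}\,c^{-m/2}\,\mathcal I_p(0)^{n-m}
=\Bigl(\frac{p-1}{p}\Bigr)^{1-m/2}\Bigl(\frac{\Gamma(1-1/p)}{\Gamma(1/p)}\Bigr)^{m/2}\mathcal I_p(0)^{n}.
\]
Multiplying by $\frac{p(p-1)^{m-1}}{2\pi^{m/2}}$ gives exactly $\frac12\bigl(\frac{p(p-1)\Gamma(1-1/p)}{\pi\,\Gamma(1/p)}\bigr)^{m/2}$. Combine this with $M_n^\#\,n^{-m/2}\,\Gamma(\frac m2)\sim\frac{\Gamma(m/2)}{(m-1)!}\,n^{m/2}$, $\mathcal I_p(0)=\frac2p\Gamma(\frac1p)$, and your factor $\prod_{s}\Gamma(\frac{\lambda_s+1}{p})/\Gamma(\frac1p)$. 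The result is precisely part (i), and part (ii) then follows by the gamma-ratio asymptotics, as you say.
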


\begin{remark}
In view of $V_{n-m}(\mathbb B_p^n) = I_{n,n-m}(0,\dots,0)$, Part (i) of Proposition~\ref{prop:laplace_asymptotics_right_edge} proves Part~(iii) of Theorem~\ref{theo:exact_asympt_V_j}.
\end{remark}

\begin{proof}[Proof of Proposition~\ref{prop:laplace_asymptotics_right_edge}]
Extend the parameters by $\lambda_{r+1}=\cdots=\lambda_n=0$.  Recall from Equations~\eqref{eq:I_n_lambda_1_ldots_lambda_n_exact_formula_conditions_lambdas}, \eqref{eq:I_n_lambda_1_ldots_lambda_n_exact_formula_setup}, \eqref{eq:S_n_def_setup_subsec} the formula
\begin{equation}\label{eq:I_n_n-m_as_int_of_S}
I_{n, n-m}(\lambda_1,\dots,\lambda_r)
=
\frac{p(p-1)^{m-1}}{2\pi^{m/2}\,
\Gamma\!\left(\frac{n+\Lambda+p-m}{p}\right)}
\int_0^\infty \theta^{\frac m2-1} S_n(\theta)\,d\theta,
\end{equation}
where
$$
S_n(\theta)=\sum_{i=1}^n\sum_{\substack{I\subseteq [n]\setminus\{i\}\\ |I|=m-1}}\mathcal F_p(\theta;\lambda_i+2p-2)
\prod_{j\in I}\mathcal F_p(\theta;\lambda_j+p-2)
\prod_{k\notin I\cup\{i\}}\mathcal F_p(\theta;\lambda_k).
$$

\medskip
\noindent
\textit{Step 1: Decomposition of $S_n(\theta)$.}
We split $S_n(\theta)$ into the main contribution $M_n(\theta)$ for which all $m$ special positions $I\cup \{i\}$ lie among
$\{r+1,\dots,n\}$, and the complementary contribution $R_n(\theta)$:
$$
S_n(\theta)=\sum_{i=r+1}^n
\sum_{\substack{I\subseteq \{r+1,\dots,n\}\setminus\{i\}\\ |I|=m-1}}\left(\cdots\right)+ \sum_{i=1}^n
\sum_{\substack{I\subseteq \{1,\ldots,n\}\setminus\{i\}\\ |I|=m-1\\ \{i\}\cup I\not\subseteq \{r+1,\dots,n\}}}\left(\cdots\right)=:M_n(\theta)+R_n(\theta),
$$
Thus, $R_n(\theta)$ is exactly the sum of those terms for which at least one of the $m$ special
positions $\{i\}\cup I$ falls in $\{1,\dots,r\}$.
Since $\lambda_{r+1}=\cdots=\lambda_n=0$, every summand in $M_n(\theta)$ is identical, and hence
\[
M_n(\theta)
=
(n-r)\binom{n-r-1}{m-1}\,
\mathcal I_p(\theta)^{\,n-r-m}\,G(\theta)
\quad \text{ with } G(\theta):=
\mathcal K_p(\theta)\,
\mathcal J_p(\theta)^{m-1}
\prod_{\ell=1}^r \mathcal F_p(\theta;\lambda_\ell).
\]
Fix $\delta >0$. In the next step, we shall analyze the integral $\int_0^\delta \theta^{\frac m2-1} M_n(\theta)\,d\theta$, which is the principal contribution.  Then we shall show that the integrals $\int_\delta^{\infty} \theta^{\frac m2-1} M_n(\theta)\,d\theta$, $\int_0^\delta \theta^{\frac m2-1} R_n(\theta)\,d\theta$,  and $\int_\delta^{\infty} \theta^{\frac m2-1} M_n(\theta)\,d\theta$ are negligible.

\medskip
\noindent
\textit{Step 2: Principal contribution: $\int_0^\delta \theta^{\frac m2-1} M_n(\theta)\,d\theta$.}
Observe that $M_n(\theta)$ contains the large-power factor $\mathcal I_p(\theta)^{\,n-r-m}$, and $\mathcal I_p'(\theta)=-\mathcal K_p(\theta)<0$ for all $\theta\geq 0$,
so $\mathcal I_p(\theta)$ is strictly decreasing and has its unique maximum at $\theta=0$. This suggests that the main contribution to the integral comes from a neighborhood of $\theta=0$.  To make this precise, we shall use the following standard result~\cite[Chapter~II, p.~58]{Wong2001}.
\begin{lemma}[Endpoint Laplace asymptotics]
\label{lem:endpoint_laplace_multiplicative}
Let $\delta>0$ and $\alpha>0$. Let $F:[0,\delta]\to (0,\infty)$ and
$H:[0,\delta]\to \mathbb C$ be continuous functions. Assume that
\begin{itemize}
\item $F(x)<F(0)$ for all $x\in (0,\delta]$;
\item the right derivative $F'(0)$ exists and satisfies $F'(0)<0$;
\item $H(0)\neq 0$.
\end{itemize}
Then, as $N\to\infty$,
\[
\int_0^\delta x^{\alpha-1}F(x)^N H(x)\,dx
\sim
H(0)\,\Gamma(\alpha)
\left(\frac{F(0)}{-F'(0)}\right)^\alpha
F(0)^N\,N^{-\alpha}.
\]
\end{lemma}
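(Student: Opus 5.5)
The plan is to factor out $F(0)^N$ and reduce the statement to a Laplace integral with a linear phase near the endpoint. Set $\phi(x):=\log\bigl(F(x)/F(0)\bigr)$ for $x\in[0,\delta]$. This is well defined because $F>0$. It satisfies $\phi(0)=0$ and $\phi(x)<0$ for $x\in(0,\delta]$, and it has right derivative $\phi'(0)=F'(0)/F(0)=:-c<0$. Then
\[
\int_0^\delta x^{\alpha-1}F(x)^N H(x)\,dx=F(0)^N\int_0^\delta x^{\alpha-1}e^{N\phi(x)}H(x)\,dx,
\]
and the claim becomes
\[
\int_0^\delta x^{\alpha-1}e^{N\phi(x)}H(x)\,dx\sim H(0)\,\Gamma(\alpha)\,c^{-\alpha}N^{-\alpha}.
\]
Indeed, $c^{-\alpha}=\bigl(F(0)/(-F'(0))\bigr)^\alpha$.

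Fix a small $\varepsilon\in(0,1)$. By the definition of the right derivative, there is $\eta\in(0,\delta)$ such that
\[
-c(1+\varepsilon)x\le\phi(x)\le -c(1-\varepsilon)x
\quad\text{and}\quad
|H(x)-H(0)|\le\varepsilon
\qquad\text{for } x\in[0,\eta],
\]
the second bound coming from continuity of $H$. Split the integral into the parts over $[0,\eta]$ and $[\eta,\delta]$.

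\emph{The part over $[\eta,\delta]$.} On this compact interval $\phi$ is continuous and strictly negative, so $\phi\le-\kappa$ for some $\kappa>0$. Since $H$ is bounded, this part is $O(e^{-\kappa N})$, which is $o(N^{-\alpha})$.

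\emph{The part over $[0,\eta]$.} Substitute $x=u/N$ to get
\[
N^{-\alpha}\int_0^{\eta N}u^{\alpha-1}e^{N\phi(u/N)}H(u/N)\,du.
\]
The integrand (extended by $0$ beyond $\eta N$) is dominated by $\sup|H|\,u^{\alpha-1}e^{-c(1-\varepsilon)u}$, which is integrable on $(0,\infty)$ because $\alpha>0$. Pointwise, $N\phi(u/N)\to -cu$ by the existence of $\phi'(0)$, and $H(u/N)\to H(0)$ by continuity. Dominated convergence then gives that the integral tends to $H(0)\int_0^\infty u^{\alpha-1}e^{-cu}\,du=H(0)\Gamma(\alpha)c^{-\alpha}$. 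At this stage $\varepsilon$ only serves to fix $\eta$ and to supply the dominating function, so no further limit in $\varepsilon$ is needed.

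Combining the two parts and using $H(0)\neq0$ turns the additive $o(N^{-\alpha})$ error into a relative one, which gives the stated $\sim$. The only delicate point is that we know merely a one-sided derivative at $0$ and no differentiability elsewhere. This is why the argument uses only the two-sided linear bound on $\phi$ near $0$, obtained from the definition of the right derivative, rather than a Taylor expansion. Complex values of $H$ cause no difficulty, since dominated convergence applies verbatim to complex integrands.
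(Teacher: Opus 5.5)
Your proof is correct. It necessarily takes a different route from the paper, because the paper does not prove this lemma at all: it quotes it as a standard result from Wong's monograph (Chapter~II) and applies it.

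Your argument is self-contained:
\begin{itemize}
\item You pass to $\phi=\log(F/F(0))$.
\item You extract the two-sided linear bound $-c(1+\varepsilon)x\le\phi(x)\le -c(1-\varepsilon)x$ near $0$ directly from the definition of the right derivative.
\item You rescale $x=u/N$ and conclude by dominated convergence with the majorant $\sup|H|\,u^{\alpha-1}e^{-c(1-\varepsilon)u}$.
\item The piece over $[\eta,\delta]$ is exponentially small by compactness.
\end{itemize}
Two cosmetic points. On $[\eta,\delta]$ you should say explicitly that $x^{\alpha-1}$ is bounded there. The bound $|H(x)-H(0)|\le\varepsilon$ is never used and can be dropped.

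What your route buys is a proof under exactly the hypotheses the lemma lists: continuity, a strict maximum at the endpoint, and only a one-sided derivative at $0$, with no regularity of $F$ away from $0$. Textbook statements of Laplace's method are typically formulated under extra smoothness, such as a continuous derivative of the phase near the endpoint or asymptotic expansions of phase and amplitude. Relying on the citation therefore tacitly requires checking those stronger conditions. In the paper's application they do hold, since $F=\mathcal I_p$ is $C^1$ on $[0,\delta]$ with $\mathcal I_p'(0)=-\mathcal K_p(0)<0$. What the citation buys is brevity and, under those stronger assumptions, a full asymptotic expansion rather than just the leading term.
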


We apply the lemma with   $\alpha:=\frac m2$,  $N:=n-r-m$, and $F(\theta):=\mathcal I_p(\theta)$ for $0\le \theta\le \delta$.
Then $F$ is continuous, strictly positive and strictly decreasing on $[0,\delta]$, and has right derivative $I_p'(0)= -\mathcal K_p(0)<0$.
 Therefore, Lemma~\ref{lem:endpoint_laplace_multiplicative}
applies and yields
\begin{align}
\int_0^\delta \theta^{\frac m2-1}M_n(\theta)\,d\theta
&=
(n-r)\binom{n-r-1}{m-1}
\int_0^\delta
\theta^{\frac m2-1}\mathcal I_p(\theta)^{\,n-r-m}G(\theta)\,d\theta  \notag\\
&\sim
\frac{\Gamma\!\left(\frac m2\right)n^{m/2} }{(m-1)!}\,
\mathcal I_p(0)^{\,n-r-m/2}\,
\mathcal K_p(0)^{\,1-m/2}\,
\mathcal J_p(0)^{\,m-1}
\prod_{\ell=1}^r \mathcal F_p(0;\lambda_\ell).  \label{eq:right_edge_proof_int_0_delta_endpoint_laplace}
\end{align}

\medskip
\noindent
\textit{Step 3: $\int_\delta^{\infty} \theta^{\frac m2-1} M_n(\theta)\,d\theta$ and $\int_\delta^{\infty} \theta^{\frac m2-1} R_n(\theta)\,d\theta$ are negligible.}
Since $\mathcal I_p'(\theta)=-\mathcal K_p(\theta)<0$, the function $\mathcal I_p(\theta)$ is strictly decreasing on $[0,\infty)$, and
therefore
\[
q_\delta:=\sup_{\theta\ge \delta}\mathcal I_p(\theta)=\mathcal I_p(\delta)<\mathcal I_p(0).
\]
Moreover, for every $\nu\in \C$ with $\Re\nu>-1$, the change of variables
$u=\theta^{1/(2p-2)}y$ gives
\[
\mathcal F_p(\theta;\nu) =
\theta^{-\frac{\nu+1}{2p-2}}
\int_{\mathbb R}|x|^\nu e^{-|x|^{2p-2}}e^{-\theta^{-\frac p{2p-2}}|x|^p}\,dx = O\left(\theta^{-\frac{\Re\nu+1}{2p-2}}\right),
\qquad \theta\to\infty.
\]
Hence, one can choose an integer $L\ge 1$ such that
\[
\int_\delta^\infty
\theta^{\frac m2-1}\,
|G(\theta)|\,\mathcal I_p(\theta)^L\,d\theta<\infty.
\]
Since $\mathcal I_p(\theta)\le q_\delta$ for $\theta\ge \delta$, it follows that
\begin{align*}
\int_\delta^\infty \theta^{\frac m2-1}|M_n(\theta)|\,d\theta
&\le
(n-r)\binom{n-r-1}{m-1}
q_\delta^{\,n-r-m-L}
\int_\delta^\infty
\theta^{\frac m2-1}|G(\theta)|\mathcal I_p(\theta)^L\,d\theta \\
&=
O\!\left(n^m q_\delta^{\,n-r-m-L}\right).
\end{align*}
Since $q_\delta<\mathcal I_p(0)$, this is exponentially smaller than the main term obtained in~\eqref{eq:right_edge_proof_int_0_delta_endpoint_laplace}.

The same argument applies to $R_n(\theta)$: each summand of $R_n(\theta)$ contains at least
$n-r-m+1$ factors $\mathcal I_p(\theta)$, while all the remaining factors are of at most polynomial
growth as $\theta\to\infty$. Moreover, the number of summands is $O(n^{m-1})$. Hence,
\[
\int_\delta^\infty \theta^{\frac m2-1}|R_n(\theta)|\,d\theta
=
O\!\left(n^{m-1} q_\delta^{\,n-r-m+1-L}\right),
\]
which is likewise exponentially negligible compared to the main term obtained in~\eqref{eq:right_edge_proof_int_0_delta_endpoint_laplace}.

\medskip
\noindent
\textit{Step 3: $\int_0^\delta \theta^{\frac m2-1}R_n(\theta)\,d\theta$ is negligible.}
On the interval $[0,\delta]$, all the finitely many functions
$$
\mathcal F_p(\theta;\lambda_\ell),\qquad
\mathcal F_p(\theta;\lambda_\ell+p-2),\qquad
\mathcal F_p(\theta;\lambda_\ell+2p-2),
\quad \ell=1,\dots,r,
$$
as well as $\mathcal F_p(\theta;p-2)$ and  $\mathcal F_p(\theta;2p-2)$ are bounded. Every summand contributing to $R_n(\theta)$ contains at least
$n-r-m+1$ factors $\mathcal I_p(\theta)$. The number of such summands is $O(n^{m-1})$, since one of the $m$
special positions must be chosen from the fixed set $\{1,\dots,r\}$.
Therefore,
\[
|R_n(\theta)| \leq C n^{m-1}\mathcal I_p(\theta)^{n-r-m+1},
\qquad 0\le \theta\le \delta.
\]
Applying Lemma~\ref{lem:endpoint_laplace_multiplicative} once more, now with
$H\equiv 1$ and $N:=n-r-m+1\sim n$, we obtain
\[
\int_0^\delta \theta^{\frac m2-1}|R_n(\theta)|\,d\theta
\leq
C n^{m-1} \int_0^\delta \theta^{\frac m2-1}I_p(\theta)^{n-r-m+1} \,d\theta
=
O\!\left(n^{m-1}\mathcal I_p(0)^{n-r-m+1}n^{-m/2}\right).
\]
This is smaller by a factor $n^{-1}$ than the asymptotic~\eqref{eq:right_edge_proof_int_0_delta_endpoint_laplace} for the main part
$\int_0^\delta \theta^{m/2-1}M_n(\theta)\,d\theta$.

\medskip
\noindent
\textit{Step 4: Taking pieces together.}
Combining Steps 2 and 3 we obtain
\begin{align*}
\int_0^\infty \theta^{\frac m2-1}S_n(\theta)\,d\theta
&\sim
\frac{\Gamma\!\left(\frac m2\right)n^{m/2} }{(m-1)!}\,
\mathcal I_p(0)^{\,n-r-m/2}\,
\mathcal K_p(0)^{\,1-m/2}\,
\mathcal J_p(0)^{\,m-1}
\prod_{\ell=1}^r \mathcal F_p(0;\lambda_\ell).
\end{align*}
Substituting this into~\eqref{eq:I_n_n-m_as_int_of_S} yields
\begin{align*}
I_{n, n-m}(\lambda_1,\dots,\lambda_r)
&\sim
\frac{p(p-1)^{m-1}}{2\pi^{m/2}}
\frac{\Gamma\!\left(\frac m2\right)\,  n^{m/2}}{(m-1)!}
\frac{
\mathcal I_p(0)^{\,n-r-m/2}\,
\mathcal K_p(0)^{\,1-m/2}\,
\mathcal J_p(0)^{\,m-1}
\prod_{\ell=1}^r \mathcal F_p(0;\lambda_\ell)
}{
\Gamma\!\left(\frac{n+\Lambda+p-m}{p}\right)
}.
\end{align*}
Using
\[
\mathcal I_p(0)=\frac{2}{p}\Gamma\!\left(\frac1p\right),\quad
\mathcal J_p(0)=\frac{2}{p}\Gamma\!\left(1-\frac1p\right), \quad
\mathcal K_p(0)=\frac{2}{p}\Gamma\!\left(2-\frac1p\right),\quad
\mathcal F_p(0;\lambda_\ell)=\frac{2}{p}\Gamma\!\left(\frac{\lambda_\ell+1}{p}\right),
\]
and straightforward simplifications gives
\[
I_{n, n-m}(\lambda_1,\dots,\lambda_r)
\sim
\frac{\Gamma\!\left(\frac m2\right)}{2(m-1)!}
\left(
\frac{p(p-1)\Gamma\!\left(1-\frac1p\right)}
{\pi\,\Gamma\!\left(\frac1p\right)}
\right)^{m/2}
\left(\prod_{\ell=1}^r
\frac{\Gamma\!\left(\frac{\lambda_\ell+1}{p}\right)}
{\Gamma\!\left(\frac1p\right)}
\right)
\frac{\left(\frac{2}{p}\Gamma\!\left(\frac1p\right)\right)^n\,n^{m/2}}
{\Gamma\!\left(\frac{n+\Lambda+p-m}{p}\right)},
\]
which is precisely the formula stated in Part~(i) of Proposition~\ref{prop:laplace_asymptotics_right_edge}. The remaining assertions of Proposition~\ref{prop:laplace_asymptotics_right_edge} are straightforward.
\end{proof}

\subsection{Proof of Theorem~\ref{theo:maxwell_curvature}}
Fix $1<p<\infty$. For sufficiently large $n$,  let $j(n) \in \{0,\ldots, n-1\}$. Let  $X_n=(X_{1;n},\ldots, X_{n;n})$ be a random point on $\partial \mathbb B_p^n$ distributed according to the normalized curvature measure $\Phi_{j(n)}(\mathbb B_p^n, \cdot)/ V_{j(n)}(\mathbb B_p^n)$.

\medskip
\noindent
\textit{The bulk regime.}
Let  $j(n) /n \to\alpha \in (0,1)$ as $n\to\infty$. Since the distributions of $(X_{1;n}, \ldots, X_{r;n})$ and $(\xi_1^{(\alpha)},\ldots, \xi_r^{(\alpha)})$ are invariant w.r.t.\ sign flips of the arbitrary number of  coordinates, it suffices to show that
\begin{equation}\label{eq:maxwell_central_regime_proof_abs_vals}
(n^{1/p}|X_{1;n}|, \ldots, n^{1/p}|X_{r;n}|) \toweak ((p/\alpha)^{1/p} |\xi_1^{(\alpha)}|,\ldots, (p/\alpha)^{1/p} |\xi_r^{(\alpha)}|).
\end{equation}
We show the pointwise convergence of mixed moments (multivariate Mellin transforms) in a suitable half-plane, which implies weak convergence. Take some $\lambda_1,\ldots, \lambda_r \in \C$ with sufficiently large $\Re \lambda_\ell$ and let $\Lambda := \lambda_1+\ldots + \lambda_r$. Part~(ii) of  Proposition~\ref{prop:asymptotics_laplace_central} yields
\begin{align}
\E \left[\prod_{j=1}^r (n^{1/p}|X_{j;n}|)^{\lambda_j} \right]
&= n^{\Lambda/p} \frac{I_{n,j(n)}(\lambda_1,\dots,\lambda_r)}{I_{n,j(n)}(0,\ldots, 0)}
\notag\\
&\to
\left(\frac{p}{\alpha}\right)^{\Lambda/p}
\prod_{j=1}^r
\left(
\alpha\frac{\mathcal F_p(\theta_{p,\alpha};\lambda_j)}{\mathcal F_p(\theta_{p,\alpha};0)}
+
(1-\alpha)\frac{\mathcal F_p(\theta_{p,\alpha};\lambda_j+p-2)}{\mathcal F_p(\theta_{p,\alpha};p-2)}
\right),
\label{eq:method_moments_central_regime_1}
\end{align}
as $n\to\infty$. On the other hand, for every $j=1,\ldots, r$ the density of $\xi_j^{(\alpha)}$ is given by~\eqref{eq:f_p_alpha_density_in_maxwell_laws} and thus
$$
\E \left[|\xi_j^{(\alpha)}|^{\lambda_j}\right] =  \int_{-\infty}^\infty |u|^{\lambda_j} f_{p,\alpha}(u)\, du = \alpha\frac{\mathcal F_p(\theta_{p,\alpha};\lambda_j)}{\mathcal F_p(\theta_{p,\alpha};0)}
+
(1-\alpha)\frac{\mathcal F_p(\theta_{p,\alpha};\lambda_j+p-2)}{\mathcal F_p(\theta_{p,\alpha};p-2)}.
$$
By independence of $\xi_1^{(\alpha)},\dots, \xi_r^{(\alpha)}$,
\begin{align*}
\E \left[\prod_{j=1}^r ((p/\alpha)^{1/p} |\xi_j^{(\alpha)}|)^{\lambda_j}\right]
=
\left(\frac{p}{\alpha}\right)^{\Lambda/p}
\prod_{j=1}^r
\left(
\alpha\frac{\mathcal F_p(\theta_{p,\alpha};\lambda_j)}{\mathcal F_p(\theta_{p,\alpha};0)}
+
(1-\alpha)\frac{\mathcal F_p(\theta_{p,\alpha};\lambda_j+p-2)}{\mathcal F_p(\theta_{p,\alpha};p-2)}
\right),
\end{align*}
which is precisely the right-hand side of~\eqref{eq:method_moments_central_regime_1}. This proves~\eqref{eq:maxwell_central_regime_proof_abs_vals}.

\medskip
\noindent
\textit{The left-edge regime.}
Let $j(n) = j \in \N\cup\{0\}$ be fixed. By symmetry of the involved distributions w.r.t.\ sign flips, it suffices to show that for every $r\in \N$,
\begin{equation}\label{eq:maxwell_left_edge_regime_proof_abs_vals}
n^{1/p} (|X_{1;n}|, \ldots, |X_{r;n}|) \toweak  (p^{1/p}|\eta_1|,\ldots, p^{1/p}|\eta_r|),
\end{equation}
where the random variables $\eta_1,\ldots, \eta_r$ are independent and each $\eta_i$ has the density $g_p$ given by~\eqref{eq:g_p_u_density_maxwell}.

Fix $\lambda_1,\ldots, \lambda_r\in \mathbb C$ with sufficiently large $\Re \lambda_\ell$. Part~(ii) of Proposition~\ref{prop:laplace_asymptotics_left_edge} implies that
\begin{equation}\label{eq:method_moments_left_edge_regime_1}
\E \left[\prod_{\ell=1}^r (n^{1/p}|X_{\ell;n}|)^{\lambda_\ell} \right]
=
n^{\Lambda/p} \frac{I_{n,j}(\lambda_1,\dots,\lambda_r)}{I_{n,j}(0,\dots,0)}
\to
\left(\prod_{\ell=1}^r
\frac{\Gamma\!\left(\frac{\lambda_\ell+p-1}{2p-2}\right)}{\sqrt{\pi}}
\right)
\left(
\frac{\sqrt\pi}{\Gamma\!\left(\frac{2p-1}{2p-2}\right)}
\right)^{\Lambda/p},
\end{equation}
as $n\to\infty$. On the other hand, using independence, \eqref{eq:g_p_u_density_maxwell} and evaluating a standard gamma integral, we obtain
$$
\E \left[\prod_{\ell=1}^r (p^{1/p} |\eta_\ell|)^{\lambda_\ell}\right]
=
p^{\Lambda/p}
\cdot
\prod_{\ell=1}^r
\int_{-\infty}^\infty |u|^{\lambda_\ell} g_{p}(u)\, du
=
\left(\prod_{\ell=1}^r
\frac{\Gamma\!\left(\frac{\lambda_\ell+p-1}{2p-2}\right)}{\sqrt{\pi}}
\right)
\left(
\frac{\sqrt\pi}{\Gamma\!\left(\frac{2p-1}{2p-2}\right)}
\right)^{\Lambda/p},
$$
which coincides with the right-hand side of~\eqref{eq:method_moments_left_edge_regime_1}. This proves~\eqref{eq:maxwell_left_edge_regime_proof_abs_vals}.

\medskip
\noindent
\textit{The right-edge regime.}
Let $j(n) = n-m$, where $m\in \N$ is fixed. As before, it suffices to show that for every $r\in\N$,
\begin{equation}\label{eq:density_maxwell_right_edge_proof}
n^{1/p} (|X_{1;n}|, \ldots, |X_{r;n}|) \toweak (p^{1/p}|\xi_1|,\ldots,  p^{1/p}|\xi_r|),
\end{equation}
where the random variables $\xi_1,\ldots, \xi_r$ are independent and have the $p$-Gaussian density $f_p$ given by~\eqref{eq:p_gauss_density}.

Fix $\lambda_1,\ldots, \lambda_r\in \mathbb C$ with sufficiently large $\Re \lambda_\ell$. Part~(ii) of  Proposition~\ref{prop:laplace_asymptotics_right_edge} gives
\begin{align}
\E \left[\prod_{\ell=1}^r (n^{1/p}|X_{\ell;n}|)^{\lambda_\ell} \right]
= n^{\Lambda/p} \frac{I_{n,n-m}(\lambda_1,\dots,\lambda_r)}{I_{n,n-m}(0,\ldots, 0)}
\to
p^{\Lambda/p}
\cdot
\prod_{\ell=1}^r
\frac{\Gamma\!\left(\frac{\lambda_\ell+1}{p}\right)}
{\Gamma\!\left(\frac1p\right)},
\label{eq:method_moments_right_edge_regime_1}
\end{align}
as $n\to\infty$. On the other hand, it follows from independence that
$$
\E \left[\prod_{\ell=1}^r (p^{1/p} |\xi_\ell|)^{\lambda_\ell}\right]
=
p^{\Lambda/p}
\cdot
\prod_{\ell=1}^r
\int_{-\infty}^\infty |u|^{\lambda_\ell} f_{p}(u)\, du
=
p^{\Lambda/p}
\cdot
\prod_{\ell=1}^r
\frac{\Gamma\!\left(\frac{\lambda_\ell+1}{p}\right)}
{\Gamma\!\left(\frac1p\right)},
$$
which coincides with the right-hand side of~\eqref{eq:method_moments_right_edge_regime_1}.
This proves~\eqref{eq:density_maxwell_right_edge_proof}.
\hfill $\Box$

\section*{Acknowledgement}
ZK has been supported by the German Research Foundation under Germany's Excellence Strategy  EXC 2044/2 -- 390685587, \textit{Mathematics M\"unster: Dynamics - Geometry - Structure} and by the DFG priority program SPP 2265 \textit{Random Geometric Systems}. JP has been supported by the German Research Foundation under DFG project 516672205 \textit{Limit theorems for the volume of random
projections of $\ell_p$-balls}.
This paper benefited from many helpful and illuminating discussions with ChatGPT and ScholarGPT, which sharpened both the perspective and presentation; the work would have been impossible without these exchanges.

\bibliography{intrinsic_vol_l_p_balls_bib}

\begin{thebibliography}{50}
\providecommand{\natexlab}[1]{#1}
\providecommand{\url}[1]{\texttt{#1}}
\expandafter\ifx\csname urlstyle\endcsname\relax
  \providecommand{\doi}[1]{doi: #1}\else
  \providecommand{\doi}{doi: \begingroup \urlstyle{rm}\Url}\fi

\bibitem[Adamczak et~al.(2024)Adamczak, Pivovarov, and
  Simanjuntak]{AdamczakPivovarovSimanjuntak2024}
R.~Adamczak, P.~Pivovarov, and P.~Simanjuntak.
\newblock Limit theorems for the volumes of small codimensional random sections
  of {$\ell_p^n$}-balls.
\newblock \emph{The Annals of Probability}, 52\penalty0 (1):\penalty0 93--126,
  2024.
\newblock \doi{10.1214/23-AOP1646}.

\bibitem[Barthe and Wolff(2023)]{BartheWolff2023}
F.~Barthe and P.~Wolff.
\newblock Volume properties of high-dimensional {O}rlicz balls.
\newblock In Rados{\l}aw Adamczak, Nathael Gozlan, Karim Lounici, and Mokshay
  Madiman, editors, \emph{High Dimensional Probability IX}, volume~80 of
  \emph{Progress in Probability}, pages 75--95. Birkh{\"a}user, Cham, 2023.
\newblock \doi{10.1007/978-3-031-26979-0_2}.

\bibitem[Barthe et~al.(2005)Barthe, Gu{\'e}don, Mendelson, and
  Naor]{BartheGuedonMendelsonNaor2005}
F.~Barthe, O.~Gu{\'e}don, S.~Mendelson, and A.~Naor.
\newblock A probabilistic approach to the geometry of the $\ell_p^n$-ball.
\newblock \emph{The Annals of Probability}, 33\penalty0 (2):\penalty0 480--513,
  2005.
\newblock \doi{10.1214/009117904000000874}.

\bibitem[Berman(1980)]{Berman1980}
S.~M. Berman.
\newblock Stationarity, isotropy and sphericity in {$\ell_p$}.
\newblock \emph{Zeitschrift f{\"u}r Wahrscheinlichkeitstheorie und Verwandte
  Gebiete}, 54\penalty0 (1):\penalty0 21--23, 1980.
\newblock \doi{10.1007/BF00535348}.

\bibitem[Betke and Henk(1993)]{betke_henk}
U.~Betke and M.~Henk.
\newblock Intrinsic volumes and lattice points of crosspolytopes.
\newblock \emph{Monatsh. Math.}, 115\penalty0 (1-2):\penalty0 27--33, 1993.

\bibitem[Borovkov(1991)]{Borovkov1991}
K.~A. Borovkov.
\newblock On the convergence of projections of uniform distributions on balls.
\newblock \emph{Theory of Probability and Its Applications}, 35\penalty0
  (3):\penalty0 546--550, 1991.
\newblock Translated from \emph{Teoriya Veroyatnostei i ee Primeneniya} 35
  (1990), no. 3, 547--551.

\bibitem[Borwein and Bailey(2004)]{BorweinBailey2004}
J.~M. Borwein and D.~H. Bailey.
\newblock \emph{Mathematics by Experiment: Plausible Reasoning in the 21st
  Century}.
\newblock A K Peters, Natick, MA, 2004.
\newblock ISBN 9781568812113.

\bibitem[Diaconis and Freedman(1987)]{DiaconisFreedman1987}
P.~Diaconis and D.~Freedman.
\newblock A dozen de {F}inetti-style results in search of a theory.
\newblock \emph{Annales de l'IHP Probabilit{\'e}s et statistiques}, 23\penalty0
  (2, suppl.):\penalty0 397--423, 1987.

\bibitem[Federer(1959)]{Federer1959}
H.~Federer.
\newblock Curvature measures.
\newblock \emph{Transactions of the American Mathematical Society}, 93\penalty0
  (3):\penalty0 418--491, 1959.
\newblock \doi{10.1090/S0002-9947-1959-0110078-1}.

\bibitem[Fedoryuk(1977)]{Fedoryuk}
M.~V. Fedoryuk.
\newblock \emph{{Metod perevala. (In Russian)}}.
\newblock Izdat. ``Nauka'', Moscow, 1977.

\bibitem[Fr{\"u}hwirth and Prochno(2024)]{FruehwirthProchno2024}
L.~Fr{\"u}hwirth and J.~Prochno.
\newblock Sanov-type large deviations and conditional limit theorems for
  high-dimensional {O}rlicz balls.
\newblock \emph{Journal of Mathematical Analysis and Applications},
  536\penalty0 (1):\penalty0 128169, 2024.
\newblock \doi{10.1016/j.jmaa.2024.128169}.

\bibitem[Gantert et~al.(2017)Gantert, Kim, and Ramanan]{GantertKimRamanan2017}
N.~Gantert, S.~S. Kim, and K.~Ramanan.
\newblock Large deviations for random projections of {$\ell_p$} balls.
\newblock \emph{The Annals of Probability}, 45\penalty0 (6B):\penalty0
  4419--4476, 2017.
\newblock \doi{10.1214/16-AOP1169}.

\bibitem[Gao(2003)]{Gao2003}
F.~Gao.
\newblock The mean of a maximum likelihood estimator associated with the
  {B}rownian bridge.
\newblock \emph{Electronic Communications in Probability}, 8:\penalty0 1--5,
  2003.
\newblock \doi{10.1214/ECP.v8-1064}.

\bibitem[Gao(2013)]{Gao2013}
F.~Gao.
\newblock Volumes of generalized balls.
\newblock \emph{The American Mathematical Monthly}, 120\penalty0 (2):\penalty0
  130, 2013.
\newblock \doi{10.4169/amer.math.monthly.120.02.130}.

\bibitem[Gao and Vitale(2001)]{GV01}
F.~Gao and R.~A. Vitale.
\newblock Intrinsic volumes of the {B}rownian motion body.
\newblock \emph{Discrete Comput. Geom.}, 26\penalty0 (1):\penalty0 41--50,
  2001.

\bibitem[Gusakova et~al.(2025)Gusakova, Spodarev, and
  Zaporozhets]{GusakovaSpodarevZaporozhets2025}
A.~Gusakova, E.~Spodarev, and D.~Zaporozhets.
\newblock Intrinsic volumes of ellipsoids.
\newblock \emph{Journal of Mathematical Sciences}, 293\penalty0 (1):\penalty0
  83--96, 2025.
\newblock \doi{10.1007/s10958-025-07982-z}.

\bibitem[Hadwiger(1979)]{hadwiger}
H.~Hadwiger.
\newblock Gitterpunktanzahl im {S}implex und {W}ills'sche {V}ermutung.
\newblock \emph{Math. Ann.}, 239\penalty0 (3):\penalty0 271--288, 1979.

\bibitem[Henk and Hern{\'a}ndez~Cifre(2008)]{HenkHernandezCifre2008a}
M.~Henk and M.~A. Hern{\'a}ndez~Cifre.
\newblock Intrinsic volumes and successive radii.
\newblock \emph{Journal of Mathematical Analysis and Applications},
  343\penalty0 (2):\penalty0 733--742, 2008.
\newblock \doi{10.1016/j.jmaa.2008.01.091}.

\bibitem[Henk et~al.(2017)Henk, Richter-Gebert, and
  Ziegler]{HenkRichterGebertZiegler2017}
M.~Henk, J.~Richter-Gebert, and G.~M. Ziegler.
\newblock Basic properties of convex polytopes.
\newblock In Jacob~E. Goodman, Joseph O'Rourke, and Csaba~D. T{\'o}th, editors,
  \emph{Handbook of Discrete and Computational Geometry}, pages 383--413. CRC
  Press, 3 edition, 2017.
\newblock \doi{10.1201/9781315119601-15}.

\bibitem[Hug and Weil(2020)]{HugWeil2020}
D.~Hug and W.~Weil.
\newblock \emph{Lectures on convex geometry}, volume 286 of \emph{Graduate
  Texts in Mathematics}.
\newblock Springer, Cham, 2020.
\newblock \doi{10.1007/978-3-030-50180-8}.

\bibitem[Johnston and Prochno(2023)]{JohnstonProchno2023}
S.~G.~G. Johnston and J.~Prochno.
\newblock A {Maxwell} principle for generalized {O}rlicz balls.
\newblock \emph{Annales de l'Institut Henri Poincar{\'e}, Probabilit{\'e}s et
  Statistiques}, 59\penalty0 (3):\penalty0 1223--1247, 2023.
\newblock \doi{10.1214/22-AIHP1298}.

\bibitem[Jubin(2023)]{Jubin2023}
B.~Jubin.
\newblock Intrinsic volumes of sublevel sets.
\newblock \emph{Annales de la Facult{\'e} des sciences de Toulouse :
  Math{\'e}matiques}, 32\penalty0 (5):\penalty0 911--938, 2023.
\newblock \doi{10.5802/afst.1758}.
\newblock URL
  \url{https://afst.centre-mersenne.org/articles/10.5802/afst.1758/}.

\bibitem[Kabluchko and Prochno(2021)]{KabluchkoProchno2021}
Z.~Kabluchko and J.~Prochno.
\newblock The maximum entropy principle and volumetric properties of {O}rlicz
  balls.
\newblock \emph{Journal of Mathematical Analysis and Applications},
  495\penalty0 (1):\penalty0 124687, 2021.
\newblock \doi{10.1016/j.jmaa.2020.124687}.

\bibitem[Kabluchko and Schange(2025)]{KabluchkoSchange2025}
Z.~Kabluchko and P.~Schange.
\newblock Angles of orthocentric simplices, 2025.
\newblock URL \url{https://arxiv.org/abs/2505.05048}.

\bibitem[Kabluchko and Zaporozhets(2014)]{KabluchkoZaporozhets2014}
Z.~Kabluchko and D.~Zaporozhets.
\newblock Random determinants, mixed volumes of ellipsoids, and zeros of
  {G}aussian random fields.
\newblock \emph{Journal of Mathematical Sciences}, 199\penalty0 (2):\penalty0
  168--173, 2014.
\newblock \doi{10.1007/s10958-014-1844-9}.
\newblock Translated from \emph{Zapiski Nauchnykh Seminarov POMI} 408 (2012),
  187--196.

\bibitem[Kabluchko and Zaporozhets(2016)]{kabluchko_zaporozhets_sobolev}
Z.~Kabluchko and D.~Zaporozhets.
\newblock Intrinsic volumes of {S}obolev balls with applications to {B}rownian
  convex hulls.
\newblock \emph{Trans. Amer. Math. Soc.}, 368\penalty0 (12):\penalty0
  8873--8899, 2016.
\newblock ISSN 0002-9947,1088-6850.
\newblock \doi{10.1090/tran/6628}.
\newblock URL \url{https://doi.org/10.1090/tran/6628}.

\bibitem[Kabluchko and Zaporozhets(2019)]{KabluchkoZaporozhets2019}
Z.~Kabluchko and D.~Zaporozhets.
\newblock Expected volumes of {G}aussian polytopes, external angles, and
  multiple order statistics.
\newblock \emph{Transactions of the American Mathematical Society},
  372\penalty0 (3):\penalty0 1709--1733, 2019.
\newblock \doi{10.1090/tran/7708}.

\bibitem[Kempka and Vyb{\'\i}ral(2017)]{KempkaVybiral2017}
H.~Kempka and J.~Vyb{\'\i}ral.
\newblock Volumes of unit balls of mixed sequence spaces.
\newblock \emph{Mathematische Nachrichten}, 290\penalty0 (8--9):\penalty0
  1317--1327, 2017.
\newblock \doi{10.1002/mana.201500414}.

\bibitem[Kim and Ramanan(2018)]{KimRamanan2018}
S.~S. Kim and K.~Ramanan.
\newblock A conditional limit theorem for high-dimensional {$\ell_p$}-spheres.
\newblock \emph{Journal of Applied Probability}, 55\penalty0 (4):\penalty0
  1060--1077, 2018.
\newblock \doi{10.1017/jpr.2018.71}.

\bibitem[Liao and Ramanan(2024)]{LiaoRamanan2024}
Y.-T. Liao and K.~Ramanan.
\newblock Geometric sharp large deviations for random projections of
  {$\ell_p^n$} spheres and balls.
\newblock \emph{Electronic Journal of Probability}, 29:\penalty0 1--56, 2024.
\newblock \doi{10.1214/23-EJP1020}.

\bibitem[Lotz et~al.(2020)Lotz, McCoy, Nourdin, Peccati, and
  Tropp]{LotzMcCoyNourdinPeccatiTropp2020}
M.~Lotz, M.~B. McCoy, I.~Nourdin, G.~Peccati, and J.~A. Tropp.
\newblock Concentration of the intrinsic volumes of a convex body.
\newblock In Bo'az Klartag and Emanuel Milman, editors, \emph{Geometric Aspects
  of Functional Analysis}, volume 2266 of \emph{Lecture Notes in Mathematics},
  pages 139--167. Springer, Cham, 2020.
\newblock \doi{10.1007/978-3-030-46762-3_7}.

\bibitem[McCoy and Tropp(2014)]{McCoyTropp2014}
M.~B. McCoy and J.~A. Tropp.
\newblock From {S}teiner formulas for cones to concentration of intrinsic
  volumes.
\newblock \emph{Discrete \& Computational Geometry}, 51\penalty0 (4):\penalty0
  926--963, 2014.
\newblock \doi{10.1007/s00454-014-9595-4}.

\bibitem[McMullen(1991)]{McMullen1991}
P.~McMullen.
\newblock Inequalities between intrinsic volumes.
\newblock \emph{Monatshefte f{\"u}r Mathematik}, 111\penalty0 (1):\penalty0
  47--53, 1991.
\newblock \doi{10.1007/BF01299276}.

\bibitem[Mogul'ski{\u\i}(1991)]{Mogulskii1991English}
A.~A. Mogul'ski{\u\i}.
\newblock De {F}inetti-type results for {$\ell_p$}.
\newblock \emph{Siberian Mathematical Journal}, 32\penalty0 (4):\penalty0
  609--616, 1991.
\newblock \doi{10.1007/BF00972979}.

\bibitem[Naor(2007)]{Naor2007}
A.~Naor.
\newblock The surface measure and cone measure on the sphere of {$\ell_p^n$}.
\newblock \emph{Transactions of the American Mathematical Society},
  359\penalty0 (3):\penalty0 1045--1075, 2007.
\newblock \doi{10.1090/S0002-9947-06-03939-0}.

\bibitem[Naor and Romik(2003)]{NaorRomik2003}
A.~Naor and D.~Romik.
\newblock Projecting the surface measure of the sphere of {$\ell_p^n$}.
\newblock \emph{Annales de l'I.H.P. Probabilit{\'e}s et statistiques},
  39\penalty0 (2):\penalty0 241--261, 2003.
\newblock \doi{10.1016/S0246-0203(02)00008-0}.

\bibitem[Prochno(2026)]{P2026}
J.~Prochno.
\newblock The {L}arge and {M}oderate {D}eviations {A}pproach in {G}eometric
  {F}unctional {A}nalysis.
\newblock In \emph{High {D}imensional {P}robability {X}}, volume~82 of
  \emph{Progr. Probab.}, pages 347--421. Birkh\"auser/Springer, Cham, 2026.
\newblock ISBN 978-3-032-06056-3; 978-3-032-06057-0.
\newblock \doi{10.1007/978-3-032-06057-0\_12}.
\newblock URL \url{https://doi.org/10.1007/978-3-032-06057-0_12}.

\bibitem[Prochno et~al.(2018)Prochno, Th{\"a}le, and
  Turchi]{ProchnoThaleTurchi2018}
J.~Prochno, C.~Th{\"a}le, and N.~Turchi.
\newblock Geometry of {$\ell_p^n$}-balls: Classical results and recent
  developments, 2018.
\newblock URL \url{https://arxiv.org/abs/1808.10435}.

\bibitem[Prochno et~al.(2024)Prochno, Th{\"a}le, and
  Tuchel]{ProchnoThaleTuchel2024}
J.~Prochno, C.~Th{\"a}le, and P.~Tuchel.
\newblock Limit theorems for the volume of random projections and sections of
  {$\ell_p^N$}-balls, 2024.
\newblock URL \url{https://arxiv.org/abs/2412.16054}.

\bibitem[Rachev and R{\"u}schendorf(1991)]{RachevRuschendorf1991}
S.~T. Rachev and L.~R{\"u}schendorf.
\newblock Approximate independence of distributions on spheres and their
  stability properties.
\newblock \emph{The Annals of Probability}, 19\penalty0 (3):\penalty0
  1311--1337, 1991.
\newblock \doi{10.1214/aop/1176990346}.

\bibitem[Rivin(2007)]{Rivin2007}
I.~Rivin.
\newblock Surface area and other measures of ellipsoids.
\newblock \emph{Advances in Applied Mathematics}, 39\penalty0 (4):\penalty0
  409--427, 2007.
\newblock \doi{10.1016/j.aam.2006.08.009}.

\bibitem[Ruben(1960)]{ruben}
H.~Ruben.
\newblock On the geometrical moments of skew-regular simplices in
  hyperspherical space, with some applications in geometry and mathematical
  statistics.
\newblock \emph{Acta Math.}, 103:\penalty0 1--23, 1960.

\bibitem[Schechtman and Zinn(1990)]{SchechtmanZinn1990}
G.~Schechtman and J.~Zinn.
\newblock On the volume of the intersection of two {$\ell_p^n$} balls.
\newblock \emph{Proceedings of the American Mathematical Society}, 110\penalty0
  (1):\penalty0 217--224, 1990.
\newblock \doi{10.1090/S0002-9939-1990-1015684-0}.

\bibitem[Schneider(1993)]{Schneider1993}
R.~Schneider.
\newblock Convex surfaces, curvature and surface area measures.
\newblock In Peter~M. Gruber and J{\"o}rg~M. Wills, editors, \emph{Handbook of
  Convex Geometry, Vol. A}, pages 273--299. North-Holland, Amsterdam, 1993.

\bibitem[Schneider(2014)]{SchneiderBook}
R.~Schneider.
\newblock \emph{Convex {B}odies: the {B}runn-{M}inkowski {T}heory}, volume 151
  of \emph{Encyclopedia of Mathematics and its Applications}.
\newblock Cambridge University Press, Cambridge, expanded edition, 2014.

\bibitem[Schneider(2022)]{schneider_book_convex_cones_probab_geom}
R.~Schneider.
\newblock \emph{Convex cones~--~geometry and probability}, volume 2319 of
  \emph{Lecture Notes in Mathematics}.
\newblock Springer, Cham, 2022.
\newblock \doi{10.1007/978-3-031-15127-9}.
\newblock URL \url{https://doi.org/10.1007/978-3-031-15127-9}.

\bibitem[Schneider and Weil(2008)]{schneider_weil_book}
R.~Schneider and W.~Weil.
\newblock \emph{Stochastic and integral geometry}.
\newblock Probability and its Applications (New York). Springer-Verlag, Berlin,
  2008.
\newblock \doi{10.1007/978-3-540-78859-1}.
\newblock URL \url{https://doi.org/10.1007/978-3-540-78859-1}.

\bibitem[Tee(2005)]{Tee2005}
G.~J. Tee.
\newblock Surface area and capacity of ellipsoids in $n$ dimensions.
\newblock \emph{New Zealand Journal of Mathematics}, 34\penalty0 (2):\penalty0
  165--198, 2005.

\bibitem[Wang(2005)]{Wang2005}
X.~Wang.
\newblock Volumes of generalized unit balls.
\newblock \emph{Mathematics Magazine}, 78\penalty0 (5):\penalty0 390--395,
  2005.
\newblock \doi{10.2307/30044198}.

\bibitem[Wong(2001)]{Wong2001}
R.~Wong.
\newblock \emph{Asymptotic Approximations of Integrals}, volume~34 of
  \emph{Classics in Applied Mathematics}.
\newblock Society for Industrial and Applied Mathematics, Philadelphia, PA,
  2001.
\newblock \doi{10.1137/1.9780898719260}.

\end{thebibliography}
\bibliographystyle{plainnat}

\end{document}